\newcommand{\ord}{\operatorname{ord}}
\newcommand{\Sym}{\operatorname{Sym}}
\newcommand{\tbf}[1]{\textbf{#1}}
\newcommand{\nosubsection}[1]{%
  \par\addvspace{.7\linespacing\@plus.7\linespacing} 
  {\noindent\normalfont\bfseries #1} 
  \par\nopagebreak\addvspace{.5\linespacing} 
  \@afterindenttrue\@afterheading 
}
\def\subsection{\@startsection{subsection}{2}%
  \z@{.7\linespacing\@plus.7\linespacing}{.5\linespacing}%
  {\normalfont\bfseries}}
\def\subsubsection{\@startsection{subsubsection}{3}%
  \z@{.6\linespacing\@plus.6\linespacing}{.4\linespacing}%
  {\normalfont\bfseries}}
\numberwithin{equation}{section}
\theoremstyle{plain}
\newtheorem{thm}{Theorem}[section]
\newtheorem{lem}[thm]{Lemma}
\newtheorem{fac}[thm]{Fact}
\theoremstyle{customproof}
\newtheorem*{customproof1}{Proof of Theorem~\ref{e}}
\newtheorem{prop}[thm]{Proposition}
\newtheorem{cor}[thm]{Corollary}
\newtheorem{main}{Theorem}
\theoremstyle{definition}
\newtheorem{defn}[thm]{Definition}
\newtheorem{rem}[thm]{Remark}
\newtheorem{con}[thm]{Conjecture}
\def\diag{\operatorname{diag}}
\def\disc{\operatorname{disc}}
\def\min{\operatorname{min}}
\def\max{\operatorname{max}}
\def\Hom{\operatorname{Hom}}
\def\ind{\operatorname{ind}}
\def\cind{\operatorname{c-ind}}
\def\Ind{\operatorname{Ind}}
\def\Irr{\operatorname{Irr}}
\def\Sym{\operatorname{Sym}}
\def\Re{\operatorname{Re}}
\def\beq{\begin{equation}}
\def\eeq{\end{equation}}
\def\beqn{\begin{equation*}}
\def\eeqn{\end{equation*}}
\def\beqna{\begin{eqnarray}}
\def\eeqna{\end{eqnarray}}
\def\beqnan{\begin{eqnarray*}}
\def\eeqnan{\end{eqnarray*}}
\def\wt{\widetilde}
\def\vi{\varphi}
\def\tbf{\textbf}
\def\mc{\mathcal}
\def\bs{\backslash}
\def\J{\mathrm{J}}
\def\M{\mathrm{M}}
\def\U{\mathrm{U}}
\def\T{\mathrm{T}}
\def\P{\mathrm{P}}
\def\B{\mathrm{B}}
\def\an{\mathrm{an}}
\def\As{\mathrm{As}}
\def\GL{\mathrm{GL}}
\def\SO{\mathrm{SO}}
\def\SL{\mathrm{SL}}
\def\Mp{\mathrm{Mp}}
\def\Sp{\mathrm{Sp}}
\def\SO{\mathrm{SO}}
\def\A{\mathbb{A}}
\def\As{\mathscr{A}}
\def\WD{\mathit{WD}}
\def\AA{\mathbb{A}}
\def\CC{\mathbb{C}}
\def\ZZ{\mathbb{Z}}
\def\RR{\mathbb{R}}
\def\ss{\subseteq}
\def\S{\mathrm{S}}
\def\e{\epsilon}
\def\bs{\backslash}
\def\1{\Eins}
\def\varddots{\mathinner{\mkern1mu
    \raise\p@\hbox{.}\mkern2mu\raise4\p@\hbox{.}\mkern2mu
    \raise7\p@\vbox{\kern7\p@\hbox{.}}\mkern1mu}}
\newcommand{\BIGOP}[1]{\mathop{\mathchoice%
{\raise-0.22em\hbox{\huge $#1$}}
{\raise-0.05em\hbox{\Large $#1$}}{\hbox{\large $#1$}}{#1}}}
\newcommand{\BIGboxplus}{\mathop{\mathchoice%
{\raise-0.35em\hbox{\huge $\boxplus$}}%
{\raise-0.15em\hbox{\Large $\boxplus$}}{\hbox{\large $\boxplus$}}{\boxplus}}}
\title{Fourier-Jacobi periods and the non-tempered Gan--Gross--Prasad conjecture for $\Mp_{2n} \times \Sp_{2m}$}
\author{Jaeho Haan}
\address{Department of Mathematical Sciences, KAIST, 291 Daehak-ro, Yuseong-gu, Daejeon, 34141, South Korea}
\email{jaehohaan@gmail.com}
\subjclass[2020]{Primary 11F70, 22E55, 11F27, 11F66, 22E50}
\keywords{the non-tempered Gan-Gross-Prasad conjecture, classical groups, theta correspondence, Rankin-Selberg integral, Bessel periods, Fourier-Jacobi periods, special $L$-values}
\begin{document}
\begin{abstract}
In this paper, we establish one direction of the non-tempered global Gan--Gross--Prasad (GGP) conjecture for the symplectic-metaplectic pairs $\Sp_{2n} \times \Mp_{2m}$ and $\Mp_{2n} \times \Sp_{2m}$. Our study focuses on two distinct families of non-tempered global $A$-parameters spanning all coranks, where standard relative trace formula methods remain unavailable.

Our approach proceeds along two distinct lines of attack. First, we treat the case where both members of the pair are non-tempered, utilizing regularized Fourier-Jacobi periods of residual Eisenstein series and establishing a reciprocal non-vanishing theorem. Second, we address the setting where only the metaplectic member is non-tempered and its central $L$-value vanishes, relying heavily on the global theta correspondence and explicit seesaw identities.

Along the way, we establish one direction of the tempered GGP conjecture for these pairs in arbitrary coranks, and prove a  dichotomy for the global associated $L$-packet of the metaplectic parameter $[1] \boxplus M'$. We show that this packet lies entirely in the residual spectrum or entirely in the cuspidal spectrum, a behavior determined uniformly by the non-vanishing of the central $L$-value $L(1/2, M')$.
\end{abstract}

\maketitle
\section{\textbf{Introduction}}

\nosubsection{Motivation and context}
Special values of automorphic $L$-functions lie at the heart of modern number theory and arithmetic geometry. In the automorphic setting, a guiding principle is that distinguished period integrals detect central (or near-central) values of suitable $L$-functions. A particularly influential manifestation of this principle is the Gan--Gross--Prasad (GGP) conjecture. Originally formulated by Gross and Prasad \cite{GP92, GP94} for orthogonal groups, the conjecture relates the non-vanishing of period integrals on special orthogonal groups to the non-vanishing of central critical values of Rankin--Selberg $L$-functions.

Together with Gan \cite{Gan2}, they later extended this conjecture to encompass all classical and metaplectic groups. This vast framework naturally splits into two domains: Bessel periods, which are associated with orthogonal or Hermitian unitary groups, and Fourier--Jacobi periods, which are attached to symplectic-metaplectic or skew-Hermitian unitary groups.

The original conjectures of \cite{Gan2} concerned representations lying in tempered $L$-packets. In \cite{Gan1}, the framework was elegantly reformulated using Arthur's classification parameters (global $A$-parameters). In this modernized formulation, the central Rankin--Selberg value is replaced by a canonical extended $L$-value, $L(z,M,N)\big|_{z=0}$, defined purely in terms of the $A$-parameters of the two groups. Furthermore, the non-vanishing of a period is predicted to force the pair of parameters $(M,N)$ to be structurally "relevant." We refer to this generalized statement as the \emph{non-tempered GGP conjecture}.

The present paper focuses on Fourier--Jacobi periods for the symplectic-metaplectic group pairs
\[
(G_n,G_m)\in\bigl\{(\Sp(W_n),\Mp(W_m)),\ (\Mp(W_n),\Sp(W_m))\bigr\},
\qquad n\ge m.
\]
Our primary objective is to prove one direction of the non-tempered GGP conjecture for two specific, arbitrarily high-corank families of non-tempered $A$-parameters: one where both members of the pair are non-tempered, and another where only the metaplectic member is non-tempered.

\nosubsection{Fourier--Jacobi periods}
We first recall the periods in question. Let $W_n$ and $W_m$ be symplectic spaces over a number field $F$ of dimensions $2n$ and $2m$, respectively, with $n\ge m$. Let $X$ and $X^*$ be isotropic subspaces of $W_n$ such that $X+X^*$ decomposes as a sum of $r = n-m$ hyperbolic planes, and assume $W_m$ is the orthogonal complement of $X+X^*$ in $W_n$.

For $k \in \{n,m\}$, let $\Sp_{2k}$ denote the isometry group of $W_k$ and $\Mp_{2k}(\A)$ its metaplectic double cover. We view $\Sp_{2m}$ as the subgroup of $\Sp_{2n}$ acting trivially on the orthogonal complement of $W_m$. Fixing a complete flag of $X$, let $N_{n,r}$ be the unipotent radical of its stabilizer in $\Sp_{2n}$. The group $\Sp_{2m}$ naturally acts on $N_{n,r}$ by conjugation. We then form the semidirect product $H_{n,r} = N_{n,r}\rtimes\Sp_{2m}$.

There exists a global generic Weil representation $\nu_{\psi^{-1},W_m}$ of $(N_{n,r}\rtimes\Mp_{2m})(\A)$, realized on the Schwartz space $\mc S(\A^m)$. For any Schwartz function $f$, the associated theta series $\Theta_{\psi^{-1},W_m}(f)$ provides an $H_{n,r}(F)$-invariant genuine automorphic realization of $\nu_{\psi^{-1},W_m}$.

Let $\pi_1\boxtimes\pi_2$ be an irreducible automorphic representation of $\Mp_{2n}(\A)\times\Mp_{2m}(\A)$ where exactly one factor is genuine. For automorphic forms $\vi_1\in\pi_1$, $\vi_2\in\pi_2$, and a Schwartz function $f\in\nu_{\psi^{-1},W_m}$, the $\psi$-\emph{Fourier--Jacobi period} is defined as
\[
\mc{FJ}_\psi(\vi_1,\vi_2,f)
=
\int_{[H_{n,r}]}
\vi_1(u\wt g)\,\vi_2(\wt g)\,
\Theta_{\psi^{-1},W_m}(f)(u,\wt g)\,du\,dg.
\]
Here, $\wt g$ represents any preimage of $g$ in $\Mp_{2m}(\A)$. Because the product of two genuine forms is non-genuine, the integral is well-defined independent of this choice. 

If one of the representations $\pi_1$ or $\pi_2$ is cuspidal, this integral converges absolutely. However, the non-tempered parameters we treat in this paper frequently surface in the residual spectrum, meaning neither factor is cuspidal. In such cases, the integral diverges and requires regularized integration. We adopt the regularization scheme introduced in \cite{H}, which safely extends the classical period integral via mixed truncation. We retain the notation $\mc{FJ}_\psi$ for this regularized period.

\nosubsection{The global GGP conjecture}
Let us precisely formulate the conjecture using global $A$-parameters. Recall that a discrete global $A$-parameter $Y$ defines an associated global $L$-parameter $\phi_Y$. We denote by $\Pi_{\phi_Y}^{\mathrm{aut}}$ the set of irreducible discrete automorphic representations whose local components lie in the local $L$-packet of $\phi_{Y,v}$ at every place $v$. This set is referred to as the \emph{associated global $L$-packet} of $Y$.

\begin{con}[{\cite[Conjecture~9.1]{Gan1}}]\label{conn}
Let $(G_n,G_m)\in\{(\Sp(W_n),\Mp(W_m)),(\Mp(W_n),\Sp(W_m))\}$ and let $M\times N$ be a discrete global $A$-parameter of $G_n\times G_m$. Let $\pi_1\boxtimes\pi_2$ be an irreducible discrete automorphic representation of $G_n(\A)\times G_m(\A)$ with $A$-parameter $M\times N$. Then the following assertions hold:
\begin{enumerate}
\item If the Fourier-Jacobi period $\mc{FJ}_\psi$ is nonzero on $\pi_1\boxtimes\pi_2\boxtimes\nu_{\psi^{-1},W_m}$, then $(M,N)$ is a relevant pair.
\item Assume that $(M,N)$ is a relevant pair. Then the following are equivalent:
\begin{enumerate}
\item[(1)] There exists $(\pi_1',\pi_2')\in\Pi_{\phi_M}^{\mathrm{aut}}\times \Pi_{\phi_N}^{\mathrm{aut}}$ such that $\mc{FJ}_\psi$ is nonzero on $\pi_1'\boxtimes\pi_2'\boxtimes\nu_{\psi^{-1},W_m}$.
\item[(2)] The extended central $L$-value satisfies $L(z,M,N)\big|_{z=0}\ne0$.
\end{enumerate}
\end{enumerate}
\end{con}

When $M$ and $N$ are tempered parameters, the condition $L(z,M,N)\big|_{z=0}\ne0$ simplifies exactly to the non-vanishing of the standard central value $L(\frac12,M\times N)\ne0$. Thus, Conjecture \ref{conn} elegantly subsumes the original tempered GGP conjecture.

\nosubsection{Known results and the non-tempered challenge}
For tempered representations, the field has witnessed immense progress over the past decade. The relative trace formula has proven extraordinarily successful for unitary groups, resolving the tempered conjecture almost entirely \cite{BPCZ22,BPC25,BPLZZ21,W1,W2,Xu14,Xu16,BLX}. For classical groups other than unitary groups, one direction of the tempered conjecture has been established by relating the original period integrals to the regularized periods of certain residual Eisenstein series on larger ambient groups. This powerful idea was first pioneered by Ginzburg, Jiang, and Rallis \cite{GJR} for the symplectic-metaplectic pairs $\Mp_{2m} \times \Sp_{2n}$. However, Jiang and Zhang \cite[p.~742]{JZ} pointed out a gap in the proof of \cite[Theorem~5.1]{GJR} when $m \ne n$. Thereafter, Jiang and Zhang \cite{JSZ, JZ} bypassed this obstruction by developing a general Rankin-Selberg integral theory, successfully proving the direction for orthogonal and Hermitian unitary groups (see \cite[p.~742]{JZ}). This approach holds great potential to be adapted for symplectic-metaplectic groups.

By contrast, the non-tempered spectrum remains largely unexplored. Although the relative trace formula has proven highly successful in the tempered setting, its application to the non-tempered case remains elusive. It is worth noting, however, that significant progress has recently been made on the non-tempered GGP conjecture for general linear groups. For example, Boisseau \cite{Boi} resolved the split case $\GL_n \times \GL_{n+1}$ via residues of Rankin-Selberg zeta integrals.

However, for orthogonal, symplectic, and unitary groups, results have been heavily restricted to low-rank or highly specific cases. For corank one and small groups, specifically $(\SO_5,\SO_4)$, Gan and Gurevich \cite{GG09} and Gurevich and Szpruch \cite{GS15} proved non-tempered cases using seesaw identities in theta correspondence. Similarly, for $\U_3\times\U_2$, Gelbart, Rogawski, and Soudry \cite{GRoS97} settled parameters of the form $(\mathbb{I}_{\GL_1}\oplus M_0,[1])$. 

Cases of arbitrarily high corank have been explored in a handful of instances, such as \cite{HK}, though these are primarily restricted to situations where the smaller group has rank at most two (e.g., $(\SO_{2n},\SO_3)$ or $(\Sp_{2n},\Mp_2)$). Even more crucially, across the existing literature for classical groups, essentially all proven results heavily rely on the assumption that at least one of the parameters is tempered. The scenario where \emph{both} members of the pair are non-tempered has remained largely out of reach.

Beyond its natural place in Arthur's classification, the non-tempered conjecture is forced upon us in the symplectic-metaplectic setting for a very concrete reason. If $\pi$ is an irreducible globally generic cuspidal automorphic representation of $\Sp_{2n}(\A)$, its global $A$-parameter is automatically tempered. However, metaplectic groups defy this expectation. By \cite[Theorem~11.2]{GRS}, a globally generic cuspidal representation $\wt\pi$ of $\Mp_{2n}(\A)$ \emph{frequently possesses a non-tempered $A$-parameter}. When it does, its parameter necessarily takes the shape
\begin{equation}\label{gennontemp}
M=[1]\boxplus M',
\qquad
M'\ \text{a discrete tempered symplectic global } A\text{-parameter}.
\end{equation}
Consequently, even completing the GGP conjecture for the most well-behaved class of representations---globally generic cuspidal representations---demands that we venture into the non-tempered spectrum. 

\nosubsection{Main results}
Our main result is the following.  We prove the $(1) \Rightarrow (2)$ direction of the non-tempered GGP conjecture for the first case in which \emph{both} members of the pair are non-tempered, and both groups are of arbitrary rank and arbitrary corank.

\begin{main}[Theorem~\ref{non}, Remark~\ref{conb}]\label{b}
Let $M'$ and $N'$ be discrete tempered global $A$-parameters of different types, and let $\sigma$ be an irreducible unitary cuspidal automorphic representation of $\GL_a(\A)$ of the same type as $N'$ but not contained in $N'$. Define
\[
M=\bigl(\sigma\boxtimes[1]\bigr)\boxplus M',
\qquad
N=\bigl(\sigma\boxtimes[2]\bigr)\boxplus N'.
\]
Suppose that $M\times N$ is a discrete global $A$-parameter of $\Mp_{2n}\times\Sp_{2m}$ or of $\Sp_{2n}\times\Mp_{2m}$. Then $(M,N)$ is a relevant pair. Assume that $L(\frac12,M'\times\sigma)\ne0$. 

For any $\pi_1\in\Pi_{\phi_M}^{\mathrm{aut}}$ and $\pi_2\in\Pi_{\phi_N}^{\mathrm{aut}}$,
\[
\mc{FJ}_\psi\bigl(\pi_1,\pi_2,\nu_{\psi^{-1},W_m}\bigr)\ne0
\quad\Longrightarrow\quad
L(z,M,N)\big|_{z=0}\ne0.
\]
\end{main}

The non-vanishing hypothesis $L(\frac12,M'\times\sigma)\ne0$ plays a pivotal structural role. It guarantees that the global $L$-packet of $M$ is realized entirely in the residual spectrum of the larger group. Our proof exploits this residual realization directly, requiring no genericity assumptions on the representations $\pi_1$ or $\pi_2$.

What happens when this central value vanishes? The residual realization collapses, and the associated global $L$-packet is instead realized in the cuspidal spectrum. We fully resolve this complementary, central-vanishing scenario for the basic non-tempered parameter mentioned in \eqref{gennontemp}.

\begin{main}[Theorem~\ref{non0}]\label{c}
Let $n\ge1$, and let $M'$ be a discrete tempered symplectic global $A$-parameter of dimension $2n-2$. Define $M=[1]\boxplus M'$ as a discrete global $A$-parameter for $\Mp_{2n}$. Let $N$ be a discrete tempered global $A$-parameter for $\Sp_{2n}$. Assume that the central value vanishes:
\[
L\Bigl(\tfrac12,M'\Bigr)=0.
\]
Then for any $\pi_1\in\Pi_{\phi_M}^{\mathrm{aut}}$ and $\pi_2\in\Pi_{\phi_N}^{\mathrm{aut}}$,
\[
\mc{FJ}_\psi\bigl(\pi_1,\pi_2,\nu_{\psi^{-1},W_n}\bigr)\ne0
\quad\Longrightarrow\quad
(M,N)\ \text{is relevant and}\quad
L(z,M,N)\big|_{z=0}\ne0.
\]
\end{main}

Combining Theorem \ref{c} with the prior results in \cite{Y} and the structural description \eqref{gennontemp}, we obtain the full generic case of Conjecture \ref{conn} (in the $(1)\Rightarrow(2)$ direction), completely independent of any temperedness assumption.

\begin{cor}[Corollary~\ref{non1}]\label{gencor}
Let $\wt\pi$ and $\pi$ be irreducible globally generic cuspidal automorphic representations of $\Mp_{2n}(\A)$ and $\Sp_{2n}(\A)$, with $A$-parameters $M$ and $N$ respectively. If $\mc{FJ}_\psi(\wt\pi,\pi,\nu_{\psi^{-1},W_n})\ne0$, then $(M,N)$ is a relevant pair and $L(z,M,N)\big|_{z=0}\ne0$.
\end{cor}

The two hypotheses $L(\frac12,M')\ne0$ and $L(\frac12,M')=0$ appearing in
Theorems~\ref{b} and~\ref{c} are not merely convenient: they separate the
two possible spectral realizations of the associated global $L$-packet. We record this dichotomy as an independent result:

\begin{main}[Corollary~\ref{central-value-packet-dichotomy}]\label{d}
Let $n\ge1$, let $M'$ be a discrete tempered symplectic global $A$-parameter of dimension $2n-2$, and let $M=[1]\boxplus M'$. Then:
\begin{enumerate}
\item The global $L$-packet is non-empty: $\Pi_{\phi_M}^{\mathrm{aut}}\ne\varnothing$.
\item If $L(\frac12,M')=0$, then the entire packet is cuspidal: $\Pi_{\phi_M}^{\mathrm{aut}}\subset\Irr_{\mathrm{cusp}}(\Mp_{2n})$.
\item If $L(\frac12,M')\ne0$, then the entire packet is residual: $\Pi_{\phi_M}^{\mathrm{aut}}\subset\Irr_{\mathrm{res}}(\Mp_{2n})$.
\end{enumerate}
\end{main}

Finally, as the main input to Theorem \ref{b}, we establish the underlying tempered GGP conjecture for arbitrary coranks. 

\begin{main}\label{e}
Let $(G_n,G_m)$ be as in Conjecture \ref{conn} with $n\ge m$, and let $M,N$ be discrete tempered global $A$-parameters of different types for $G_n$ and $G_m$ respectively. For any $\pi_1\in\Pi_{\phi_M}^{\mathrm{aut}}$ and $\pi_2\in\Pi_{\phi_N}^{\mathrm{aut}}$,
\[
\mc{FJ}_\psi\bigl(\pi_1,\pi_2,\nu_{\psi^{-1},W_m}\bigr)\ne0
\quad\Longrightarrow\quad
L\Bigl(\tfrac12,M\times N\Bigr)\ne0.
\]
\end{main}

This result extends \cite[Theorem 1.1]{Y} from $m=n$ to arbitrary corank $r = n-m > 0$. As a side benefit, Theorem \ref{e} successfully removes a previously necessary assumption in Atobe's non-vanishing criterion for generalized Miyawaki lifts \cite[Theorem 5.5]{At0}.

\nosubsection{Strategy of the proofs}
Our proofs divide naturally according to the central value of the underlying parameters. 

\textbf{Step 1: The tempered case via residual Eisenstein series.}
The proof of Theorem \ref{e} builds upon the strategy introduced by Ginzburg, Jiang, and Rallis \cite{GJR}. While their pioneering work encountered a technical gap for arbitrary coranks ($n \neq m$), our approach successfully circumvents this obstruction by carefully interchanging the two residual Eisenstein series and employing the mixed truncation regularization from \cite{H}. This interchange does not eliminate the difficulty; rather, it transforms the obstruction encountered in \cite{GJR} into a vanishing statement for Fourier--Jacobi periods attached to an Eisenstein series (Proposition~\ref{l1}), presenting a more formidable challenge than before.

In \cite[Lemma~9.4]{H}, the skew-Hermitian analogue of Proposition~\ref{l1} was established via the computation of Jacquet modules. Such a computation cannot be transplanted here. While one immediate reason is its incompatibility with covering groups, the more critical issues are a recently discovered gap in that proof and a case that it does not cover at all (see Remark~\ref{gapH} for details). Consequently, we present a novel and complete proof that also repairs the argument in \cite{H}. Our approach relies on a delicate local analysis, conducted in Propositions~\ref{key} and~\ref{key3} for most cases, and in Proposition~\ref{theta-fj-aux} for the remaining case.

Beyond this issue, translating the framework of \cite{H} to the symplectic-metaplectic setting is technically more demanding than the skew-Hermitian case due to the intrinsic complexities of metaplectic covering groups. To streamline the exposition, we omit arguments that are strictly analogous to those in \cite{H}, providing complete proofs and explanations only where the presence of the covering group causes genuine deviations. These components ultimately yield Theorem~\ref{e}, alongside a \emph{reciprocal nonvanishing theorem}, which asserts that the period is non-zero on the original pair if and only if it is non-zero on the ambient residual pair.

\textbf{Step 2: Vogan's residual realization for the double non-tempered case.}
Theorem \ref{b} deals with arbitrary automorphic members of the packet. The bridge between the specific residual representations above and arbitrary packet members is provided by Proposition~\ref{voganres}. Using the endoscopic classification and component group calculations, we show that every member of the non-tempered packet occurs as a direct summand of the residual Eisenstein series $\mc E^{d-1}(\sigma,\rho)$, \emph{provided} $L(1/2, M'\times\sigma) \ne 0$. Multiplicity one identifies the given packet members with the corresponding summands of the residual pair, so that the period on the latter restricts to the given one. We then apply reciprocal nonvanishing and Theorem \ref{e} to convert the period non-vanishing into $L(1/2, M' \times N') \ne 0$, which cleanly matches the required order of the extended central $L$-value $L(z,M,N)$ at $z=0$.

\textbf{Step 3: Theta correspondence for the central-vanishing case.}
When $L(1/2, M') = 0$, Step 2 breaks down as the residual realization vanishes. Instead, we utilize the global descent construction of \cite{GRS} to construct a generic cuspidal representation $\tau$ on $\SO_{2n-1}$ with parameter $M'$. The vanishing of $L(1/2, M')$ forces the first non-zero theta lift of $\tau$ along the metaplectic tower $\{\Mp(W_k)\}_{k \ge 1}$ to be $\Theta_{\psi, V_{2n-1}, W_n}(\tau)$, which is a non-zero, globally generic cuspidal form inside our target packet. Yamana's criterion in \cite{Y0} ensures that every member of the packet arises through this construction. This theta realization enables us to apply the global seesaw identity. Writing
$V_{2n}=V_{2n-1}\oplus\langle-1\rangle$, the seesaw
\[
\xymatrix{
  \mathrm O(V_{2n}) \ar @{-}[d]\ar @{-}[dr] &
  \Mp(W_n)\times_{\{\pm1\}}\Mp(W_n) \ar @{-}[dl]\ar @{-}[d] \\
  \mathrm O(V_{2n-1})\times \mathrm O(V_1) & \Sp(W_n)
}
\]
converts the Fourier--Jacobi period of $\wt\pi\boxtimes\pi$ into a
Bessel-type period on $\SO(V_{2n})\times\SO(V_{2n-1})$ involving $\tau$
and a theta lift of $\pi$.  Analysing the resulting orthogonal period and
computing $\ord_{z=0}L(z,M,N)$ as before gives Theorem~\ref{c}.

\nosubsection{Organization of the paper}
Section~\ref{sec:prelim} fixes notation, recalls the local theta
correspondence, and collects what we need from Arthur's classification:
global $A$-parameters, their packets, and the properties of
representations with tempered $A$-parameter. Section~\ref{sec:jacquet}
carries out the local analysis of the Jacquet modules attached to
Fourier--Jacobi characters. Section~\ref{sec:residual} reviews the
residual Eisenstein series and the poles of the $L$-functions governing
them. Section~\ref{sec:reciprocal} establishes the reciprocal
nonvanishing theorem and deduces Theorem~\ref{e}. Finally,
Section~\ref{sec:nontemp} treats the non-tempered conjecture:
Subsection~\ref{FJR} gives the residual realization of an associated
global $L$-packet and Theorem~\ref{b}, and Subsection~\ref{gen} treats
the central-vanishing case, yielding Theorems~\ref{c} and~\ref{d}.

\section{\textbf{Preliminary}}\label{sec:prelim}
\subsection{Notations} We introduce some notation that will be used throughout the paper.

Let $F$ be a number field. Let $\A$ denote the adele ring of $F$ and $\A_{\text{fin}}$ the ring of finite adeles. For a place $v$ of $F$, we write $F_v$ for the completion of $F$ at $v$ and $|\cdot|_{v}$ for the normalized absolute value on $F_v$. We denote the trivial character by $\tbf{1}$ in both local and global contexts.

Let $q$ be the cardinality of the residue field of $F_v$. For an algebraic group $G$ over $F$ and for a $F$-algebra $R$, write $G(R)$ for the group of $R$-points of $G$. Denote $G(F) \bs G(\A)$ by $[G]$. Let $\psi$ be a non-trivial additive character of $F \bs \A$ and for $\alpha \in F$, put $\psi_{\alpha}(x):=\psi(\alpha x)$ for $x \in \A$ and $\eta_{\alpha}$ the quadratic character of $\A^{\times}/F^{\times}$ associated to the quadratic extension of $F(\sqrt{\alpha})/F$ by the global class field theory. For $x\in \A^{\times}$, put $|x|:=\prod_v |x|_{v}$ and for a positive integer $k$, let $\mu_k$ be the group of $k$-th root of unity in $\CC$. Denote by $( \  , \ )_v:F_v \times F_v \to \{\pm1\}$ the local Hilbert symbol and by $( \  , \ )\coloneqq \prod_v ( \  , \ )_v : \A \times \A \to \A^{\times}$ the global Hilbert symbol. \\ \indent Occasionally, we view $|\cdot|$ and $|\cdot|_{v}$ as a character of $\GL_n(\A)$ or $\GL_n(F_v)$ by composing with $\det$, respectively. For a character $\chi$ of $\GL_1(F_v)$ and $d \ge 1$, put $\chi(d):=\chi (\det_{\GL_d(F_v)})$ a character of $\GL_d(F_v)$. 

Let $W_{k}$ be a $2k$-dimensional $F$-vector space with a symplectic structure $\left<\cdot ,\cdot \right>$ and write $\Sp_{2k}$ for its symplectic group. We also write $\mathcal{H}_{W_{k}}\coloneqq W_{k} \oplus F$ the Heisenberg group associated to $(W_{k}, \left<\cdot ,\cdot \right>)$ and view it as an algebraic group over $F$. When $(W, \left< \cdot , \cdot \right>)$ is a symplectic space and $W'$ is a subspace of $W$, we view $W'$ is also symplectic space with the inherited symplectic form $\left< \cdot , \cdot \right>$ of $W$.

Fix an arbitrary positive integer $k$ and maximal totally isotropic subspaces $X$ and $X^*$ of $W_k$, which are in duality with respect to $\left<\cdot , \cdot \right>$.
Fix a complete flag in $X$
\(
0=X_0 \subset X_1 \subset \cdots \subset X_k=X,
\)
and choose a basis $\{e_1,e_2,\cdots,e_k\}$ of $X$
such that $\{e_1,\cdots,e_j\}$ is a basis of $X_j$ for $1\le j \le k$.
Let $\{f_1,f_2,\cdots ,f_k\}$ be the basis of $X^*$ which is dual to the fixed basis $\{e_1,e_2,\cdots,e_k\}$ of $X$,
i.e., $\left<e_i,f_j\right>=\delta_{ij}$ for $1\le i,j \le n$,
where $\delta_{i,j}$ denotes the Kronecker delta.
We write $X^*_j$ for the subspace of $X^*$ spanned by $\{f_1, f_2,\cdots ,f_j\}$
and $W_{k-j}$ for the orthogonal complement of $X_j + X^*_j$ in $W_k$. Then $(W_{k-j}, \left<\cdot ,\cdot \right>)$ is
a symplectic $F$-space of dimension $2(k-j)$.
\par

For each $1\le j \le k$, denote by $\P_{k,j}$ the parabolic subgroup of $\Sp_{2k}$ stabilizing $X_j$,
by $\U_{k,j}$ its unipotent radical, and by $\M_{k,j}$ the Levi subgroup of $\P_{k,j}$ stabilizing $X_j^*$.
Then $\M_{k,j} \simeq \GL(X_j) \times \Sp_{2k-2j}$.
(Here, we regard $\GL(X_j)$ as the subgroup of $\M_{k,j}$ that acts as the identity map on $W_{k-j}$.)
Denote by $N_{k,j}$ (resp. $\mathcal{N}_{j}$) the unipotent radical of the parabolic subgroup of $\Sp_{2k}$ (resp., $\GL(X_j)$)
stabilizing the flag $\{0\}=X_0 \subset X_1 \subset \cdots \subset X_j$.
If we regard $\mathcal{N}_{j}$ as a subgroup of $\M_{k,j}\simeq \GL(X_j) \times \Sp_{2k-2j}$,
it acts on $\U_{k,j}$ and so $N_{k,j}=\U_{k,j} \rtimes \mathcal{N}_{j}$. When $j=0$, $\mathcal{N}_0$ denotes the trivial group.

For a sequence $\mathbf{m}=(m_1,m_2,\ldots,m_t)$ of positive integers whose sum is $k$, we denote by $\mc{P}_{\mathbf{m}}$ the standard parabolic subgroup of $\GL_{k}$
whose Levi subgroup is isomorphic to $\GL_{m_1} \times \cdots \times \GL_{m_t}$. Especially, when $\mc{m}=(1,1,\cdots,1,1)$, we write $\mc{B}_k$ for  $\mc{P}_{(1,1,\cdots,1,1)}$. Note that $\mc{N}_k$ is the unipotent radical of $\mc{B}_k$.

Let $\B_k$ be the minimal parabolic subgroup of $\Sp_{2k}$ that stabilizes the above complete flag of $X$, and $\B_k=\T_k\U_k$ represents its Levi decomposition. We call any $F$-parabolic subgroup $\P=\M\U$ of $\Sp_{2k}$ \emph{standard} if it contains $\B_k$.
Fix a maximal compact subgroup $K=\prod_v K_v$ of $\Sp_{2k}(\A)$ such that $\Sp_{2k}(\A)=\B_k(\A)K$, and for every standard parabolic subgroup $\P=\M\U$ of $\Sp_{2k}$,
\( \P(\A)\cap K=(\M(\A) \cap K)(\U(\A)\cap K),\)
where $\M(\A) \cap K$ remains maximal compact in $\M(\A)$ (see \cite[I.1.4]{Mo}). Let $\M(\A)^1$ be the intersection of the kernels of the homomorphisms $|\chi|:\M(\A)\mapsto(\RR_{+})^{\times}$, where $\chi$ ranges over the lattice $X^*(\M)$ of $F$-rational characters of $\M$.

We give the Haar measure on $\U(\A)$ so that the volume of $[\U]$ is 1 and give the Haar measure on $K$ so that the total volume of $K$  is $1$.
We also choose Haar measures on $\M(\A)$ for all Levi subgroups $\M$ of $\Sp_{2k}(\A)$ compatibly with respect to the Iwasawa decomposition.

For each place $v$, the covering group $\wt{\GL}_{k}(F_v)$ of $\GL_k(F_v)$ is defined by $\GL_k(F_v) \times \{\pm1\}$ with group law
\[
(g_1,\e_1)\cdot(g_2,\e_2)=(g_1g_2,\e_1\e_2\cdot(\det g_1, \det g_2)_v), \quad \quad g_1,g_2 \in \GL_k(F_v).
\] Let $\Mp_{2k}(F_v)$ denote the metaplectic double covering of $\Sp_{2k}(F_v)$. Specifically,
\(\Mp_{2k}(F_v) = \Sp_{2k}(F_v) \times \{\pm1\}\)
as a set, with a group law given by
\[(g_1,\epsilon_1) \cdot (g_2,\epsilon_2) = (g_1g_2, \epsilon_1\epsilon_2 \cdot c(g_1,g_2)),\]
 where $c$ is the Rao's 2-cocycle on $\Sp(W_n)$, whose values lie in $\{\pm1\}$. (For its definition, see \cite{R}.)

We denote the double covering map as $pr: \Mp_{2k}(F_v) \to \Sp_{2k}(F_v)$. Similarly, let $\Mp_{2k}(\A)$ be the metaplectic double covering of $\Sp_{2k}(\A)$, compatible with $\Mp_{2k}(F_v)$ for all places $v$. Again, we use the notation $pr$ for the double cover map from $\Mp_{2k}(\A)$ to $\Sp_{2k}(\A)$. Notably, $\U_k
(\A)$ canonically lifts into $\Mp_{2k}(\A)$ (see \cite[Appendix I]{Mo}), and Weil's result \cite{We} ensures that $\Sp_{2k}(F)$ lifts uniquely into $\Mp_{2k}(\A)$.

When considering the unipotent subgroup $\U$ of any parabolic subgroup of $\Sp_{2k}$ (or a subgroup $\S$ of $\Sp_{2k}(F)$), we maintain the same notation $\U$ (or $\S$) to denote their image under this lifting. Furthermore, for a subgroup $\J$ of $\Sp_{2k}$, we use $\widetilde{\J}(\A)$ (resp. $\widetilde{\J}(F_v)$) to represent $pr^{-1}(\J(\A))$ (resp. $pr^{-1}(\J(F_v))$). Therefore, $\wt{\P}_{k,a}=\wt{\M}_{k,a} \cdot \U_{k,a}$, where
\(\wt{\M}_{k,a}\simeq \wt{\GL}_{a} \times_{\{\pm1\}}\Mp_{{2k-2a}}.
\) When $\J$ is the trivial group, $\widetilde{\J}(\AA)$ (resp. $\widetilde{\J}(F_v)$) equals $\{\pm1\}=\{1,-1\}$.

Lastly, a function $f$ on $\Mp_{2k}(\AA)$ (resp. $\Mp_{2k}(F_v)$) is considered genuine if $f(\epsilon \cdot g) = \epsilon \cdot f(g)$ for $\epsilon \in \{1,-1\}$ and $g \in \Mp_{2k}(\AA)$ (resp. $\Mp_{2k}(F_v)$).

 Let $\delta_{\P}$ denote the modulus function of $\P(\A)$ (resp. $\widetilde{\P}(\A)$) (see \cite[I.2.17]{Mo}). We use $\Ind$, $\ind$, and $\cind$ for normalized, unnormalized, and compactly supported induction, respectively.

We also use the standard shorthand for normalized parabolic induction. If $\rho_i$ is a representation of $\GL_{a_i}$ and $\pi_0$ is a representation of a smaller symplectic group, then
\[
\rho_1\times\cdots\times\rho_t
=\Ind_{\mc P}^{\GL_{a_1+\cdots+a_t}}
(\rho_1\boxtimes\cdots\boxtimes\rho_t),
\qquad
\rho_1\times\cdots\times\rho_t\rtimes\pi_0
=\Ind_{P}^{\Sp}
(\rho_1\boxtimes\cdots\boxtimes\rho_t\boxtimes\pi_0),
\]
where the ambient groups and standard parabolics are determined by the inducing data. The same notation is used for metaplectic groups, with the general-linear factors regarded as genuine representations of the corresponding covers. Under the fixed $\psi$-dependent identification, we write $\rho\rtimes\widetilde\pi_0$ for $\rho_\psi\rtimes\widetilde\pi_0$. These conventions apply both locally and globally. We retain the explicit $\Ind$ notation when the inducing subgroup or the ambient group must be emphasized.

For an automorphic representation $\rho$ of $\M(\A)$ (resp. $\widetilde{\M}(\A)$) and $z \in \CC$, we define $I(z,\rho)$ as the induced representation:
\[ I(z,\rho) = \text{Ind}_{\P(\A)}^{\Sp_{2k}(\A)} (\delta_{\P}^{z} \cdot \rho) \quad \text{(resp. } I(z,\rho) = \text{Ind}_{\widetilde{\P}(\A)}^{\Mp_{2k}(\A)} (\delta_{\P}^{z} \cdot \rho) \text{)}. \]

This induced representation has a realization in the $\CC$-valued function space as follows:

Let \(\As_{\P}(\Sp_{2k})\) be the space of automorphic forms on
\(\U(\A)\P(F)\backslash\Sp_{2k}(\A)\), and let
\(\As_{\P}(\Mp_{2k})\) be the analogous space of genuine automorphic
forms on \(\U(\A)\P(F)\backslash\Mp_{2k}(\A)\). These forms are
smooth, \(K\)-finite (respectively, \(\widetilde K\)-finite), and
\(\mathfrak z\)-finite. Here \(\mathfrak z\) is the center of the
universal enveloping algebra of the complexified Lie algebra
\(\mathfrak g_{\CC}\) of
\(\Sp_{2k}(F\otimes_{\mathbb Q}\mathbb R)\). The two spaces are
modules for, respectively,
\[
\Sp_{2k}(\A_{\mathrm{fin}})
\times\left(\mathfrak g_{\CC},\prod_{v\mid\infty}K_v\right)
\quad\text{and}\quad
\Mp_{2k}(\A_{\mathrm{fin}})
\times\left(\mathfrak g_{\CC},
\prod_{v\mid\infty}\widetilde K_v\right).
\]

When $\P=\Sp_{2k}$, we simply write $\As(\Sp_{2k})$ (resp. $\As(\Mp_{2k})$) to denote $\As_{\P}(\Sp_{2k})$ (resp. $\As_{\P}(\Mp_{2k})$).

For an automorphic representation $\rho$ of $\M(\A)$ (resp. a genuine automorphic representation $\rho$ of $\widetilde{\M}(\A)$) and $z \in \CC$, we define $\As_{\P}^{\vert \cdot \vert^z \cdot \rho}(\Sp_{2k})$ as the subspace of functions $\phi \in \As_{\P}(\Sp_{2k})$ (resp. $\phi \in \As_{\P}(\Mp_{2k})$) such that for all $k \in K$ (resp. $\widetilde{K}$), the function $m \mapsto \delta_{\P}(m)^{-\frac{1}{2}-z} \cdot \phi(mk)$ belongs to the space of $\rho$.

Although this paper is mainly concerned with symplectic and
metaplectic groups, we shall also use orthogonal groups and the local
theta correspondence for orthogonal--metaplectic dual pairs.
For a non-degenerate quadratic space $V$ over $F$, we write
$\mathrm O(V)$ for its full isometry group and $\mathrm{SO}(V)$ for
its special orthogonal subgroup.  When only the dimension is being
emphasized, we write $V_d$ for a $d$-dimensional quadratic space; its
isometry class, discriminant, and Witt tower will be specified whenever
they are relevant.  If $V_d$ is split, we write $V_d^{\mathrm{spl}}$,
and abbreviate
\(
 \mathrm O_d=\mathrm O(V_d^{\mathrm{spl}}),
 \
 \mathrm{SO}_d=\mathrm{SO}(V_d^{\mathrm{spl}}).
\)
Throughout, the discriminant of $V$ is normalized by
\[
 \disc(V)=(-1)^{\dim V(\dim V-1)/2}\det(V)
 \quad\text{in }F^{\times}/F^{\times2},
\]
where $\det(V)$ is the determinant of the Gram matrix of the quadratic
form $Q(x)=\frac12(x,x)_V$, and $\chi_V$ denotes the quadratic character
attached to $\disc(V)$.  With this normalization $\disc$ is constant
along a Witt tower, and $\disc(V\oplus\langle a\rangle)=-a\,\disc(V)$
when $\dim V$ is odd.

\subsection{Local theta correspondence}

We now recall the local theta correspondence for the dual pair
$\mathrm O(V)\times\Mp(W)$.

Let $F$ be a non-archimedean local field of characteristic zero.
For a quadratic space $V$ and a symplectic space $W$ over $F$, let
\(
 \omega_{\psi,V,W}
\)
denote the Weil representation of
$\mathrm O(V)\times\Mp(W)$ associated with the fixed additive
character $\psi$ and the Leray splittings used throughout the paper.

For an irreducible representation $\tau$ of $\mathrm O(V)$, the
maximal $\tau$-isotypic quotient of the Weil representation is of the
form
\(
 \omega_{\psi,V,W}\twoheadrightarrow
 \tau\boxtimes\Theta_{\psi,V,W}(\tau).
\)
The representation $\Theta_{\psi,V,W}(\tau)$ of $\Mp(W)$ is called
the \emph{big theta lift} of $\tau$.  If it is nonzero, its unique
irreducible quotient, whose existence and uniqueness follow from
Howe duality, is denoted by
\(
 \theta_{\psi,V,W}(\tau)
\)
and is called the \emph{small theta lift}.  Thus there is a natural
surjection
\(
 \Theta_{\psi,V,W}(\tau)
 \twoheadrightarrow
 \theta_{\psi,V,W}(\tau).
\)
The theta lifts of genuine irreducible representations of $\Mp(W)$
to $\mathrm O(V)$ are defined similarly.  We use an upper-case
$\Theta$ for big theta lifts and a lower-case $\theta$ for small
theta lifts; the additive character and the two spaces will always
be displayed in the notation.  We refer to
\cite[Section~13.3 and Chapter~18]{GKT} for the definitions and Howe
duality.

\subsection{Discrete global $A$-parameters and their packets}\label{dgp}

\subsubsection{Discrete global $A$-parameters}
For a nonnegative integer $d$, let $[d]$ denote the unique $(d+1)$-dimensional irreducible representation of $\SL_2(\CC)$. When $d=-1$, we understand $[-1]$ to be 0.

For a non-negative integer $k$, a global $A$-parameter for $\SO_{2k}(\A)$ and $\Sp_{2k}(\A)$ (resp. $\SO_{2k+1}(\A)$ and $\Mp_{2k}(\A)$) is a formal finite direct sum
\begin{align}\label{art}
M=\bigoplus_{i=1}^{r} M_i \boxtimes [d_i],
\end{align}
 where $M_i$ is an irreducible cuspidal unitary automorphic representation of $\GL_{n_i}(\A)$. For brevity, we write $M_i \boxtimes [0]=M_i$ and $\tbf{1} \boxtimes [d]=[d]$.

If $M$ is decomposed as in (\ref{art}), we say that $M$ is tempered if $d_i=0$ for all $i$.
In this case, we simply write $M=\bigoplus_{i=1}^r M_i$.
We write $\Sym^2$ for the symmetric square representation and $\wedge^2$ for the exterior square representation.

We say that such an $M$ is a discrete global $A$-parameter of $\SO_{2k}(\A)$ and $\Sp_{2k}(\A)$ (resp. $\SO_{2k+1}(\A)$ and $\Mp_{2n}(\A)$) if $(M_i,d_i)$ satisfy
\begin{itemize}
\item $d_i$ is an integer greater than or equal to $-1$ such that
\(\sum_{i=1}^r n_i(d_i+1) = 2k \text{ and } 2k+1 \quad \text{(resp. } 2k)\)
\item if $d_i$ is even (resp. odd), then $L(z,\Sym^2(M_i))$, the symmetric square $L$-function of $M_i$ has a pole at $z=1$ (i.e., $M_i$ is orthogonal)
\item  if $d_i$ is odd (resp. even), then $L(z,\wedge^2(M_i))$, the exterior square $L$-function of $M_i$ has a pole at $z=1$
(i.e., $M_i$ is symplectic)
\item the central character $\omega_{M_i}$ of $M_i$ satisfies $\prod_{i=1}^r \omega_{M_i}^{d_i+1}=\tbf{1}$, the trivial character of $\GL_1(\A)$
\item if $(d_i,M_i)=(d_j,M_j)$, then $i=j$.
\end{itemize}

In this case, we say that $M$ is orthogonal (resp. symplectic).

\subsubsection{Local parameters, packets and component groups}
\label{locpack}

From now on, let $G_{k}$ denote either $\SO_{2k}$, $\Sp_{2k}$,
$\SO_{2k+1}$ or $\Mp_{2k}$, and let
$M=\bigoplus_{i=1}^{r}M_i\boxtimes[d_i]$ be a global $A$-parameter of
$G_{k}$.  For a place $v$ of $F$, the localization
$M_v=\bigoplus_{i=1}^{r}M_{i,v}\boxtimes[d_i]$ is a local
$A$-parameter, each $M_{i,v}$ being viewed as a representation of the
Weil--Deligne group $\WD(F_v)$ through the local Langlands
correspondence for general linear groups.

For a global or local \(A\)-parameter \(Y\), write \(\phi_Y\) for its
associated \(L\)-parameter, obtained by composing the Arthur parameter
with the diagonal map
\[
w\longmapsto
\left(w,\diag\left(|w|^{1/2},|w|^{-1/2}\right)\right).
\]
Concretely, if \(Y=\bigoplus_{i=1}^{r}Y_i\boxtimes[d_i]\), then
\begin{equation}\label{assocL}
\phi_Y
=
\bigoplus_{i=1}^{r}\bigoplus_{j=0}^{d_i}
Y_i|\cdot|^{\frac{d_i}{2}-j},
\end{equation}
so that \(\phi_Y=Y\) precisely when \(Y\) is tempered.

To a local \(A\)-parameter \(Y_v\) is attached a finite multiset
\(\Pi_{Y_v}\) of irreducible representations of \(G_{k}(F_v)\), the local
\(A\)-packet, and likewise a local \(L\)-parameter \(\phi_v\) has a local
\(L\)-packet \(\Pi_{\phi_v}\).  In the symplectic case, we use the
packets of \cite[Theorem~1.5.1 and Sect.~2.2]{Ae}; in the metaplectic
case, we use the \(\psi_v\)-normalized packets of
\cite[Definition~4.6.1]{Li}.  We fix a global additive character
\(\psi=\bigotimes_v\psi_v\) throughout the paper and suppress it from the
local and global metaplectic packet notation.  We also suppress the group
when it is clear from the context.

Each of these packets carries an internal parametrization: a member
\(\pi_v\in\Pi_{\theta_v}\) determines a character of the component group
\(\mathcal S_{\theta_v}\) of \(\theta_v\), and we write
\(\langle s,\pi_v\rangle_{\theta_v,G}\) for its value at
\(s\in\mathcal S_{\theta_v}\).  The component groups themselves, the
characters entering the multiplicity formula, and the comparison map
between the component groups of \(Y_v\) and of \(\phi_{Y_v}\) play no
role before Section~\ref{sec:nontemp}; we therefore postpone their
description to Subsection~\ref{compgrp}, where they are first used.
Until then only the packets are needed.

\subsubsection{Global $A$-packets and associated global $L$-packets}
\label{assoc}

Let \(M\) be a discrete global \(A\)-parameter of \(G_{k}\).  Arthur's
multiplicity formula attaches to \(M\) a character \(\epsilon_M^{G}\) of
the global component group \(\mathcal S_M\), described in
Subsection~\ref{compgrp}.  The \emph{global \(A\)-packet} of \(M\) is
\begin{equation}\label{globalApacket}
 \Pi_M
 =
 \left\{
 \pi=\bigotimes_v{}'\pi_v
 \ :\
 \pi_v\in\Pi_{M_v}\ \text{for every }v,\quad
 \prod_v\langle s_v,\pi_v\rangle_{M_v,G}
 =\epsilon_M^{G}(s)\ \text{ for all }s\in\mathcal S_M
 \right\},
\end{equation}
the members being counted with the multiplicities of the local packets;
the product is well defined because \(\pi_v\) is unramified and its
character trivial at almost every place.  By the endoscopic
classification, the discrete automorphic spectrum decomposes as
\[
 \mc{A}^2(G_{k})=\bigsqcup_{M}\Pi_M,
\]
the union being over the discrete global \(A\)-parameters of \(G_{k}\),
and every member occurs with multiplicity one.  This is
\cite[Theorem~1.5.2]{Ae} for symplectic and special orthogonal groups,
\cite{Ish} for pure inner forms of odd special orthogonal groups, and
\cite[Theorems~5.1.1 and~5.4.1 and Corollary~5.4.4]{Li} for metaplectic
groups.  This classification was originally conditional on the
stabilization of the twisted trace formula and on a package of local
results announced in \cite{Ae}, chief among them the local intertwining
relation; the latter has since been established in \cite{AGIKMS}, so
that the classification now rests on the twisted weighted fundamental
lemma alone.  Every result of this paper inherits that single
hypothesis, and nothing further.  We accordingly say that each
\(\pi\in\Pi_M\) \emph{has global \(A\)-parameter} \(M\).  Note that \(\Pi_M\) depends on the choice of
\(\psi\) when \(G_{k}=\Mp_{2k}\).

We write $\Irr_{\mathrm{cusp}}(G_{k})$ (resp. $\Irr_{\mathrm{res}}(G_{k})$) for the
set of irreducible (genuine) cuspidal (resp. residual) automorphic
representations of $G_{k}(\A)$, so that
\(
\mc{A}^2(G_{k})
=
\Irr_{\mathrm{cusp}}(G_{k})\sqcup\Irr_{\mathrm{res}}(G_{k}).
\)

The following unramified description of \(\Pi_M\) is the one we use
whenever a representation is handed to us by an explicit construction
and its global \(A\)-parameter has to be identified; this happens in the
proofs of Proposition~\ref{l1} and
Proposition~\ref{central-zero-associated-packet}.

\begin{rem}
\label{sat-rem}
For two automorphic representations \(\pi_1\) and \(\pi_2\) of
\(G_{k}(\A)\), we say that \(\pi_1\) and \(\pi_2\) are \emph{nearly
equivalent} if \(\pi_{1,v}\simeq\pi_{2,v}\) for almost all places \(v\)
of \(F\).  For almost all \(v\) the representation \(M_{i,v}\) is
unramified; let \(\{c_{1,i,v},\cdots,c_{t_i,i,v}\}\) be its set of
Satake parameters.  Then, for every \(\pi\in\Pi_M\) and almost every
\(v\), the Satake parameter of \(\pi_v\) (relative to \(\psi_v\) if
\(G_{k}=\Mp_{2k}\)) is
\begin{equation}\label{sat}
    \bigcup_{i=1}^{r} \bigcup_{j=1}^{t_i}\{c_{j,i,v}\cdot q^{-\frac{d_i}{2}},c_{j,i,v}\cdot q^{-\frac{d_i}{2}+1},\cdots,c_{j,i,v}\cdot q^{\frac{d_i}{2}-1},c_{j,i,v}\cdot q^{\frac{d_i}{2}}\}.
\end{equation}
In particular all elements of \(\Pi_M\) are nearly equivalent, and by
the uniqueness of isobaric decompositions together with strong
multiplicity one \cite{JS0}, the parameter \(M\) is determined by the
collection \eqref{sat}.
\end{rem}

For a discrete global \(A\)-parameter \(Y\) for a classical group
\(G_\ell\), let \(\Pi_{\phi_Y,G_\ell}^{\mathrm{aut}}\) denote the set of
irreducible discrete automorphic representations whose local components
belong to the local \(L\)-packets associated with \(\phi_{Y,v}\) at every
place \(v\). We call this set the \emph{associated global \(L\)-packet}
of \(Y\). When the group is clear, for both symplectic and metaplectic
groups we simply write \(\Pi_{\phi_Y}^{\mathrm{aut}}\).

At almost every place, the local \(L\)-packet \(\Pi_{\phi_{Y,v}}\)
contains a unique unramified member, and by \eqref{assocL} its Satake
parameter is the one displayed in \eqref{sat}. Hence
\[
\Pi_{\phi_Y}^{\mathrm{aut}}\subset\Pi_Y,
\]
and in particular any two members of an associated global \(L\)-packet
are nearly equivalent. If \(Y\) is tempered, then \(\phi_Y=Y\), the local
\(A\)-packet and the local \(L\)-packet attached to \(Y_v\) coincide at
every place, and consequently
\(
\Pi_{\phi_Y}^{\mathrm{aut}}=\Pi_Y;
\)
the reverse inclusion here is the assertion that the local components of
a discrete automorphic member of \(\Pi_Y\) lie in the corresponding local
\(A\)-packets, which is \cite[Theorem~1.5.2]{Ae} in the symplectic case
and \cite[Corollary~5.4.4(i)]{Li} in the metaplectic case.
It is the associated global \(L\)-packet, and not the global
\(A\)-packet, that we use throughout: for a tempered parameter the two
notions agree, while for a non-tempered parameter it is the former that
enters the Gan--Gross--Prasad conjecture.

\subsection{Representations with tempered global $A$-parameter}
\label{temp}

Throughout the paper we work with the members of the associated global
$L$-packet of a tempered global $A$-parameter, rather than with tempered
automorphic representations. The properties collected in this subsection
are the only ones we shall use, and they are what makes the former class
the convenient one.

We call an irreducible admissible representation \emph{almost tempered}
if every real exponent occurring in its canonical Langlands datum lies in
the open interval \((-\frac12,\frac12)\). This is the only meaning of
``almost tempered'' used in this paper.

\begin{lem}
\label{automatic-almost-tempered}
Let \(Y\) be a discrete tempered global \(A\)-parameter for a symplectic,
metaplectic, or special orthogonal group. The local components of every
irreducible automorphic representation in the discrete spectrum with
global \(A\)-parameter \(Y\) are unitary and almost tempered. The local
components of an irreducible unitary cuspidal automorphic representation
of a general linear group are also generic and almost tempered.
\end{lem}

\begin{proof}
Automorphic local factors are unitary. Let \(\sigma\) be an irreducible
unitary cuspidal automorphic representation of \(\GL_a(\A)\). Then
\(\sigma\) is globally generic, so that every \(\sigma_v\) is generic,
and the bound towards the generalized Ramanujan conjecture of Luo,
Rudnick and Sarnak \cite[Theorem~2]{LRS} places every real exponent of
\(\sigma_v\) strictly inside \((-\frac12,\frac12)\). Alternatively, the
same bound follows from the classification of the generic unitary duals
of general linear groups \cite{Tadic,Vogan}. This proves the second
assertion.

For the first assertion, write \(Y=\BIGboxplus_{i}Y_i\) and let \(\pi\)
be an irreducible automorphic representation in the discrete spectrum
with global \(A\)-parameter \(Y\). At every place \(v\), the local
component \(\pi_v\) lies in the local \(L\)-packet attached to
\(\phi_{Y,v}=\BIGboxplus_{i}Y_{i,v}\), and the real exponents occurring
in the canonical Langlands datum of \(\pi_v\) are among those of the
\(Y_{i,v}\). The preceding paragraph therefore bounds them by
\(\frac12\) in absolute value. This is the argument of
\cite[Lemma~B.8]{At0}, which applies verbatim to symplectic and
metaplectic groups by \cite{Ae,gi,Li}, and to special orthogonal groups
by \cite{Ae,Ish}.
\end{proof}

The next lemma is the reason why a tempered global \(A\)-parameter never
produces a residual representation; it is used repeatedly below.

\begin{lem}
\label{tempered-parameter-cuspidal}
Let \(Y\) be a discrete tempered global \(A\)-parameter for
\(\Sp_{2k}\), \(\Mp_{2k}\), or an odd special orthogonal group
\(\SO(V_{2k+1})\). Every irreducible automorphic representation in the
discrete spectrum with global \(A\)-parameter \(Y\) is cuspidal. In
particular, every member of the associated global \(L\)-packet
\(\Pi_{\phi_Y}^{\mathrm{aut}}=\Pi_Y\) is cuspidal.
\end{lem}

\begin{proof}
For symplectic groups and quasi-split odd special orthogonal groups,
the assertion follows from M{\oe}glin's description of the
noncuspidal discrete spectrum; see \cite[Section~1.3]{Moeglin08}.
The corresponding statement for pure inner forms of odd special
orthogonal groups follows from the inner-form classification in
\cite{Ish}. For metaplectic groups, it follows from the decomposition
of the genuine discrete spectrum by global Arthur parameters and the
multiplicity formula \cite[Theorems~5.1.1 and~5.4.1]{Li}. Indeed, a
noncuspidal discrete member necessarily has a nontrivial Arthur
\(\SL_2\)-block \(\tau\boxtimes[d]\) with \(d\ge1\), contradicting
the temperedness of \(Y\).
\end{proof}

\begin{prop}\label{3}
Let \(M\) and \(M'\) be discrete tempered global \(A\)-parameters for
\(\Sp_{2k}\) and \(\Mp_{2k}\), respectively, and let
\(\pi\in\Pi_{\phi_{M}}^{\mathrm{aut}}\) and
\(\pi'\in\Pi_{\phi_{M'}}^{\mathrm{aut}}\).  Then the local normalized
intertwining operators attached to \(\pi\) and \(\pi'\) are holomorphic
and non-vanishing for all \(z\) with \(\Re(z)\ge\frac12\).
\end{prop}

\begin{proof}
By Lemma~\ref{automatic-almost-tempered} the local components of \(\pi\)
and \(\pi'\) are unitary and almost tempered; at an unramified place
this says that the inducing characters of the corresponding principal
series are of the form \(\nu_i|\cdot|^{s_i}\) with \(\nu_i\) unramified
unitary and \(0\le s_i<\frac12\).  See also the description of the
automorphic discrete spectrum of symplectic groups \cite{Ae} and of
metaplectic groups \cite{gi,Li}.  The assertion is then a consequence of
\cite{Zhang}.
\end{proof}

\begin{rem}
Proposition~\ref{3} and
Lemmas~\ref{automatic-almost-tempered}--\ref{tempered-parameter-cuspidal}
make precise the sense in which our results are statements about
$A$-parameters rather than about representations.  They are also where
the two members of our pairs part company: by \cite[Theorem~11.2]{GRS} a
globally generic cuspidal automorphic representation of $\Mp_{2k}(\A)$
need not have a tempered global $A$-parameter, whereas a globally generic
cuspidal automorphic representation of $\Sp_{2k}(\A)$ always does.
\end{rem}

\section{\textbf{Jacquet module corresponding to Fourier-Jacobi characters}}\label{sec:jacquet}
In this section, we introduce the Jacquet module corresponding to a Fourier-Jacobi character and extract some properties of it for certain cases. This play a key role in proving Proposition~\ref{l1}, which asserts the vanishing of the Fourier-Jacobi periods when it involves a residual representation.

Fix a finite place $v$ of $F$ and a nontrivial additive character $\psi_v$ of $F_v$. Sometimes, we regard $\psi_v$ as a character of $\U_{k,k}(F_v)$ defined by $\psi_v(u)=\psi(\left<uf_1,f_k\right>)$. For any $\alpha \in F_v$, define $\psi_{\alpha,v}:F_v \mapsto \CC^{\times}$ as
\(\psi_{\alpha,v}(x)\coloneqq\psi_v(\alpha x), \quad \text{for } x\in F_v.
\)
Let $\Omega_{\alpha, W_{k-j}, v}$ represent the Weil representation of $\mathcal{H}_{W_{k-j}}(F_v) \rtimes \Mp_{2k-2j}(F_v)$ with respect to $\psi_{\alpha,v}^{-1}$. It is worth noting that $\Omega_{\alpha, W_{0}, v} = \psi_{\alpha,v}^{-1}.$

As this section is dedicated to the local context, we will henceforth omit the subscript $v$ and the field $F_v$ from our notation.

Since $\mathcal{N}_j \ss N_{k,j-1}$ and $N_{k,j-1}\backslash N_{k,j} \simeq \mathcal{H}_{W_{k-j}}$,
we can pull back  $\Omega_{\alpha,W_{k-j}}$ to $ N_{k,j}\rtimes \Mp_{2k-2j}$
and  denote it by the same symbol $\Omega_{\alpha,W_{k-j}}$.
When $j \ge 2$, we define a character $\lambda_{j}:\mathcal{N}_j \to \CC^{\times}$ by
\[
\lambda_{j}(n)=\psi( \left<ne_2,f_1\right>+\left<ne_3,f_2\right>+ \cdots + \left<ne_{j},f_{j-1}\right> ), \quad n \in \mathcal{N}_{j}.
\] When $j=0,1$, define $\lambda_{j}$ as the trivial character.
Put $\nu_{\alpha,W_{k-j}}=\lambda_{j}^{-1} \otimes \Omega_{\alpha,W_{k-j}}$
and denote $H_{k,j}=N_{k,j} \rtimes \Mp_{2k-2j}$.
Then $\nu_{\alpha,W_{k-j}}$ is a smooth representation of $H_{k,j}$.
We shall denote by $\omega_{\alpha,W_{k-j}}$ the restriction of $\nu_{\alpha,W_{k-j}}$ to $\Mp_{2k-2j}$.
\par

For $0 \le l \le k-1$, we define characters $\psi_{l}$ of $N_{k,l+1}$,
which factors through the quotient map $n\colon N_{k,l+1} \to \U_{k,l+1} \backslash N_{k,l+1}\simeq \mathcal{N}_{l+1}$,
by setting
\(
\psi_{l}(u)=\lambda_{l+1}(n(u)).
\)
For an arbitrary $\alpha \in F^{\times}$, we also define a character $\psi_{l}^{(\alpha)}$ of $N_{k,l+1}$ by $\psi_{l}^{(\alpha)}(u)= \psi_{l}(u)\cdot \psi(\alpha\left<ue_{l+2},f_{l+1}\right>)$. (Here,we regard $e_{k+1}=f_1$ when $l=k-1$.)

For an irreducible genuine (resp. non-genuine) smooth representation $\pi'$ of $\Mp_{2k}$, write  $J_{\psi^{(\alpha)}}(\pi')$ for the Jacquet module of $\pi'$
with respect to the group $N_{k,k}$ and its character $\psi_{k-1}^{(\alpha)}$. Then it is a genuine (resp. non-genuine) representation of the group $\wt{\P}_{k}$. We say that $\pi'$ is $\psi^{(\alpha)}$-generic if
$J_{\psi^{(\alpha)}}(\pi')$ is non-zero. If $\pi'$ is $\psi^{(\alpha)}$-generic for some $\alpha \in F^{\times}$, we say that $\pi'$ is generic.
Thus a non-generic representation is $\psi^{(\alpha)}$-non-generic for
every $\alpha \in F^{\times}$, whereas the converse fails in general.  We
keep the two notions apart because Propositions~\ref{key}
and~\ref{key3} below require only $\psi^{(1)}$-non-genericity of their
inducing data, while every representation to which we apply them is in
fact non-generic.

We also write $J_{\psi_l}(\pi' \otimes \Omega_{\alpha,W_{k-l-1},v})$ for the Jacquet module of $\pi' \otimes \Omega_{\alpha,W_{k-l-1}}$ with respect to the group $N_{k,l+1}$ and its character $\psi_{l}$. Then it is a non-genuine (resp. genuine) representation of the group $\Mp_{2k-2l-2}$. We can write this as a two-step Jacquet module as follows (see \cite[Chapter 6]{GRS}.)

For $1\le l \le k$, let $C_{l-1}$ be the subgroup of $N_{k,l}$ corresponding to the center of the Heisenberg group $\mathcal{H}_{W_{k-l}}$ through the isomorphism
$ N_{k,l-1}\backslash N_{k,l} \simeq \mathcal{H}_{W_{k-l}}$. Note that $C_{l-1}(F) \simeq F$ and $C_{l-1}$ acts on $\Omega_{\alpha,W_{k-l}}$ by the character $\psi_{\alpha}^{-1}$. For $1 \le l \le k-1$, put $N_{k,l+1}^0=N_{k,l}C_{l} \ss N_{k,l+1}$ and define a character $\psi_{\alpha,l}^0:N_{k,l+1}^0 \to \CC^{\times}$ as $\psi_{\alpha,l}^0(nc)=\psi_{l-1}(n)\psi_{\alpha}(c)$ for $n \in N_{k,l}, \ c \in C_{l}$.

Let $J_{\psi_{\alpha,l}^0}(\pi')$ be the Jacquet module of $\pi'$ with respect to $N_{k,l+1}^0$ and $\psi_{\alpha,l}^0$. We also denote by $J_{\mc{H}_{W_{k-l-1}}/C_l}$ the Jacquet functor with respect to $W_{k-l-1}/C_l$ and the trivial character.

 Then we have
\beq \label{two}
J_{\psi_{l}}(\pi' \otimes \Omega_{\alpha,W_{k-l-1}})\simeq J_{\mc{H}_{W_{k-l-1}}/C_l}(J_{\psi_{\alpha,l}^0}(\pi')\otimes \Omega_{\alpha,W_{k-l-1}}).
\eeq
When $\alpha=1$, we simply write $\psi_{\alpha}, \ J_{\psi_{\alpha,l}^0}$, $\nu_{\alpha,W_{k-j}}$,  $\omega_{\alpha,W_{k-j}}$ and $ \Omega_{\alpha,W_{k-l-1}}$ as $\psi, \ J_{\psi_{l}^0}$, $\nu_{W_{k-j}}$, $\omega_{W_{k-j}}$ and $ \Omega_{W_{k-l-1}}$, respectively.

We can define a genuine character of $\GL_1$ by
\( \gamma_{\psi}\left(\beta,\zeta\right)=\zeta \gamma(\beta,\psi)^{-1}
\)
with
\(\gamma(\beta,\psi)=\gamma(\psi_{\beta})/\gamma(\psi)
\)
for $\beta \in F^{\times}$ and $\xi \in \{\pm1\}$, where $\gamma(\psi) \in \mu_8$ is a Weil constant associated with $\psi$ (see \cite{R}). Using this character, we obtain a bijection, which depends on the choice of the additive character $\psi$, between the set of equivalence classes of admissible representations of $\GL_j$ and that of genuine admissible representations of $\wt{\GL}_{j}$ via $\sigma \mapsto \sigma_{\psi}$, where
\(\sigma_{\psi} \left( g,\zeta \right) = \gamma_{\psi} \left( \det{g}, \zeta \right) \cdot\sigma(g).
\)
We also write
\[
\varepsilon_{-1}(g)=(\det g,-1)_F
\qquad (g\in\GL_j(F)).
\]
We also denote by $\mc{P}_{k,j}$ the  subgroup of $\GL_k$ consisting of $\left\{ \begin{pmatrix} \GL_{k-j} & x \\0 & \mc{N}_j\end{pmatrix}\right\}_{x \in \text{Mat}_{(k-j) \times j}}$. We write $\mc{N}_{j}$ in $\mc{P}_{k,j}$ as $\mc{N}_{k,j}$ (i.e., $\mc{N}_{k,j}=\mc{P}_{k,j} \cap (\text{Id}_{\GL_{k-j}}\times \GL_{j})$.)

For a smooth representation $\sigma$ of $\GL_j$ and a genuine smooth representation $\pi''$ of $\Mp_{2n-2j}$, the notation $\sigma\rtimes\pi''$ therefore means $\Ind_{\wt\P_{n,j}}^{\Mp_{2n}}(\sigma_\psi\boxtimes\pi'')$. The usual properties of parabolic induction and Jacquet functors in this setting are recalled in \cite{HM}.
\begin{lem} \label{Jac}
Let $n,m$ be positive integers such that $r=n-m > 0$. For a positive integer $a>0$, let $\mathcal{E}$ be a smooth representation of $\Mp_{2n+2a}$ of finite length and $\mc{I}$ a smooth representation of $\Mp_{2m+2a}$ of finite length. Assume that exactly one of $\mc{E}$ or $\mc{I}$ is genuine. Then \begin{align*}& \Hom_{N_{n+a,r} \rtimes \Sp_{2m+2a}}
\left(\mathcal{E} \otimes  \nu_{W_{m+a}}
\otimes \mc{I},
\CC \right)
\\& \simeq  \Hom_{\Sp_{2m+2a}} \left(J_{\psi_{r-1}}( \mathcal{E}\otimes \Omega_{W_{m+a}})
 \otimes \mc{I}  ,\CC \right).\end{align*}

\end{lem}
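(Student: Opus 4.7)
The plan is to reduce the left-hand $\Hom$ space to a twisted Jacquet module via Frobenius reciprocity, with the outer $G_{m+a}$-invariance surviving as a $G_{m+a}$-invariance of the resulting Jacquet module. In short, this should be a reasonably direct unwinding of the definitions combined with the standard Jacquet--Frobenius adjunction.

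First I would rewrite $\nu_{W_{m+a}}$ using its construction. On $H_{n+a,r} = N_{n+a,r} \rtimes \wt{G_{m+a}}$ one has $\nu_{W_{m+a}} = \lambda_r^{-1} \otimes \Omega_{W_{m+a}}$, and the character $\psi_{r-1}$ of $N_{n+a,r}$ is by definition the pullback of $\lambda_r$ along $N_{n+a,r} \twoheadrightarrow \mathcal{N}_r$. Consequently the $\lambda_r^{-1}$-factor becomes the character $\psi_{r-1}^{-1}$ of $N_{n+a,r}$, and the left-hand side is canonically identified with
\[
\Hom_{N_{n+a,r} \rtimes G_{m+a}}\bigl(\mathcal{E} \otimes \Omega_{W_{m+a}} \otimes \mc{I},\ \psi_{r-1}\bigr),
\]
where $\psi_{r-1}$ is extended trivially across the $\rtimes G_{m+a}$-factor.

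Next I would peel off the $N_{n+a,r}$-invariance. Since $\mc{I}$ is a representation of $\wt{G_{m+a}}$ on which $N_{n+a,r}$ acts trivially, the twisted $N_{n+a,r}$-invariance only involves the first two tensor factors, and the standard Frobenius reciprocity for twisted Jacquet modules yields
\[
\Hom_{N_{n+a,r}}\bigl(\mathcal{E} \otimes \Omega_{W_{m+a}},\ \psi_{r-1}\bigr)\, \simeq\, \Hom_{\CC}\bigl(J_{\psi_{r-1}}(\mathcal{E} \otimes \Omega_{W_{m+a}}),\CC\bigr).
\]
To reinstate the $G_{m+a}$-invariance, I would use that $G_{m+a}$ sits inside the Levi $M_{n+a,r} \simeq GL(X_r) \times G_{m+a}$ commuting with $\mathcal{N}_r$ and normalizing $U_{n+a,r}$; in particular it normalizes $N_{n+a,r}$, fixes $\psi_{r-1}$, and is compatible with the identification $N_{n+a,r-1} \bs N_{n+a,r} \simeq \mathcal{H}_{W_{m+a}}$ through which $\Omega_{W_{m+a}}$ is pulled back. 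Hence $J_{\psi_{r-1}}(\mathcal{E} \otimes \Omega_{W_{m+a}})$ inherits a natural (genuine) $\wt{G_{m+a}}$-action from the actions on $\mathcal{E}$ and $\Omega_{W_{m+a}}$, and the outer $G_{m+a}$-invariance of the original Hom space is converted to $G_{m+a}$-equivariance of the pairing against $\mc{I}$.

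Finally I would check that everything takes place over $G_{m+a}$ rather than $\wt{G_{m+a}}$: the hypothesis that exactly one of $\mathcal{E},\mc{I}$ is genuine, combined with the genuineness of $\Omega_{W_{m+a}}$, guarantees that the $\mu_2$-center acts trivially on $J_{\psi_{r-1}}(\mathcal{E} \otimes \Omega_{W_{m+a}}) \otimes \mc{I}$, so the resulting $\Hom$-space is indeed an $\Hom_{G_{m+a}}$ and the two sides match. The main technical obstacle is the bookkeeping in this last step: one must verify carefully that the $G_{m+a}$-conjugation on $N_{n+a,r}$ preserves $\psi_{r-1}$ together with the Heisenberg pullback of $\Omega_{W_{m+a}}$, and that the metaplectic cocycles restricted from $\wt{G_{n+a}}$ and $\wt{G_{m+a}}$ cancel appropriately — these are the only places where the argument is more than a formal manipulation of the Jacquet adjunction.
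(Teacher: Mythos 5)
Your proposal is correct and is essentially the paper's own argument: the paper likewise unwinds $\nu_{W_{m+a}}=\lambda_r^{-1}\otimes\Omega_{W_{m+a}}$ and applies Frobenius reciprocity for the twisted Jacquet functor $J_{\psi_{r-1}}$ in one step, leaving the $\Hom_{G_{m+a}}$ on the outside. Your additional remarks on the $G_{m+a}$-equivariance of $J_{\psi_{r-1}}$ and the cancellation of genuineness are exactly the bookkeeping the paper leaves implicit.
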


\begin{proof} By the Frobenius reciprocity,
 \begin{align*}
 & \Hom_{N_{n+a,r} \rtimes \Sp_{2m+2a}}
\left(\mathcal{E} \otimes  \nu_{W_{m+a}}
\otimes \mc{I},
\CC \right)
\\&=\Hom_{N_{n+a,r} \rtimes \Sp_{2m+2a}}
\left(\mathcal{E} \otimes  \lambda_r \otimes \Omega_{W_{m+a}}
\otimes \mc{I} ,
\CC \right)
\\& \simeq \Hom_{\Sp_{2m+2a}} \left(
J_{\psi_{r-1}}(\mathcal{E} \otimes \Omega_{W_{m+a}})
 \otimes \mc{I},\CC \right).
\end{align*}
\end{proof}

For each $2 \le t \le k$, we define characters $\lambda_{t}':\mathcal{N}_k \to \CC^{\times}$ and $\lambda_{t}'':\mathcal{N}_k \to \CC^{\times}$ as follows:
\begin{align*}
\lambda_{t}'(n)=\psi^{-1}( \left<ne_{k-t+2},f_{k-t+1}\right>+\left<ne_{k-t+3},f_{k-t+2}\right>+ \cdots + \left<ne_{k},f_{k-1}\right> ), \quad n \in \mathcal{N}_{k},\\
\lambda_{t}''(n)=\psi^{-1}( \left<ne_{k-t+2},f_{k-t+1}\right>+\left<ne_{k-t+3},f_{k-t+2}\right>+ \cdots + \left<ne_{k},f_{k-1}\right> + \left<ne_{k-t+1},f_{k-t} \right>), \quad n \in \mathcal{N}_{k}.
\end{align*}
When $t = 0$ or $t = 1$, we define $\lambda_{t}'$ and $\lambda_{t}''$ as the trivial characters.
For a smooth representation $\sigma$ of $\GL_k$ and $0 \le i \le k$, let $\sigma^{(i)}$ (reps. $\sigma_{(i)}$) be the smooth representation of $\GL_{k-i}$ (resp. $\mc{P}_{k-i-1,1}$) defined by the Jacquet module of $\sigma$ with respect to the character $\lambda_t'$ (resp. $\lambda_t''$) of $\mc{N}_k$ as in \cite[p.87]{GRS}. Note that $\sigma^{(i)}$ is nothing but the $i$-th Bernstein-Zelevinsky derivative of $\sigma$ if we use the unnormalized Jacquet functors in the definition of Bernstein-Zelevinsky derivative as in \cite{BZ}. Note that the Bernstein-Zelevinski derivative of $\chi(k)$ is given by
\[
\left( \chi (k) \right)^{(t)}
=
\begin{cases}
\chi(k-t)  &\text{ if } t \le 1, \\
0  & \text{ if } t \ge 2.
\end{cases}
\]

\begin{rem} \label{der} For characters $\mu_i$ of $\GL_1$, let $\sigma=\mu_1(m_1)\times\cdots\times\mu_t(m_t)$. We use the notation $A\stackrel{\text{s.s}}{\simeq}B$ to mean that $A$ and $B$ are isomorphic up to semi-simplication.

Using the Leibniz rule of the Bernstein-Zelevinsky derivative, it is not so difficult to check that for every integer $j \ge t+1$, $|\cdot|^{\frac{1-j}{2}}\cdot \sigma^{(j)} =0$ and for $0 \le j \le t$,
\begin{align}\label{der1}|\cdot|^{\frac{1-j}{2}}\cdot \sigma^{(j)} \stackrel{\text{s.s}}{\simeq}\bigoplus_{\substack{ i_1+\cdots + i_t=j \\ 0 \le i_1, \cdots, i_t \le 1 }} \text{Ind}_{\mc{P}_{(m_1-i_1,\cdots,m_t-i_t)}}^{\GL_{k-j}} (|\cdot|^{\frac{1-i_1}{2}}\cdot \mu_1(m_1-i_1) \boxtimes \cdots \boxtimes|\cdot|^{\frac{1-i_t}{2}}\cdot  \mu_t(m_t-i_t))
\end{align} (cf. \cite[(5.34)]{GRS}.)
By \cite[p. 452]{BZ0}, $\sigma_{(j)}$ has a finite $\mc{P}_{k-j-1,1}$-filtration whose subquotients are (compactly) induced from derivatives of $\sigma^{(j)}$. Note that for all $e \ge 1$, $(\sigma_{(j)})^{(e)}\simeq \sigma^{(e+j)}$ as a restriction of a $\mc{P}_{k-j-1,1}$-module to $\GL_{k-(e+j)}$. In particular, if $j \ge t$, then all  subquotients of $\sigma_{(j)}$ are zero and it leads to $\sigma_{(j)}=0$.

\end{rem}

For two non-negative integers $t,s$, put $\delta_{t,s}=\begin{cases}1 \text{ if } t \ge s \\ 0 \text{ if } t<s. \end{cases}$ Choose an arbitrary $\alpha \in F$. The following theorem is a slight generalization of \cite[Theorem 6.1]{GRS}, reflecting $\alpha$, whose proof is almost the same.

\begin{thm}[{\cite[Theorem~6.1]{GRS}}] \label{ind1} Let $\sigma$ be a smooth representation of $\GL_j$ and $\wt{\tau}$ a smooth representation of $\Mp_{2n}$. Choose an arbitrary non-negative integer $l$. 
Then
\begin{align*}
J_{\psi_{\alpha,l}^0}(\sigma\rtimes\wt\tau)
&\stackrel{\mathrm{s.s}}{\simeq}
\bigoplus_{\substack{l-n<t\le l\\0\le t\le j}}
\ind_{\wt{\P}_{j-t}'}^{\Mp_{2n+2j-(2l+2)}\ltimes\mc H_{W_{n+j-(l+1)}}}
\left(|\cdot|^{\frac{1-t}{2}}\sigma^{(t)}\boxtimes
J_{\psi_{\alpha,l-t}^0}(\wt\tau)\right)\\
&\quad\oplus\delta_{j-1,l}\,
J_{C_l,\psi_\alpha}\left(
\ind_{\wt{\P}_{j-l}''}^{\Mp_{2n+2j-(2l+2)}\ltimes\mc H_{W_{n+j-(l+1)}}}
\left(|\cdot|^{-\frac l2}\sigma_{(l)}\boxtimes\wt\tau\right)
\right).
\end{align*}
\end{thm}
\noindent (For the definitions of $\P_{j-t}'$ and $\P_{j-l}''$, we refer the reader to \cite[(6.4)]{GRS} and \cite[(6.8)]{GRS}.)

Using (\ref{two}), we obtain the following two theorems by combining special cases of Theorem~\ref{ind1} with \cite[Proposition 6.6]{GRS} and \cite[Proposition 6.7]{GRS}. (Here, we reflected some typos in \cite[Proposition 6.7]{GRS}.)

\begin{thm}\label{ind2}
Let $\sigma$ be a smooth representation of $\GL_j$ and $\wt\tau$ a smooth representation of $\Mp_{2n}$. For an integer $l\ge0$, assume either $j\le l$ or $\sigma_{(l)}=0$. Then
\[
J_{\psi_l}\left((\sigma\rtimes\wt\tau)\otimes
\Omega_{\alpha,W_{n+j-(l+1)}}\right)
\stackrel{\mathrm{s.s}}{\simeq}
\bigoplus_{\substack{l-n<t\le l\\0\le t\le j}}
|\cdot|^{\frac{1-t}{2}}\sigma^{(t)}\rtimes
J_{\psi_{l-t}}\left(\wt\tau\otimes
\Omega_{\alpha,W_{n+t-(l+1)}}\right).
\]
\end{thm}

\begin{thm}\label{ind3}
For an integer $j\ge1$ and a character $\mu$ of $\GL_1$, let $\sigma=\mu(j)$, and let $\wt\tau$ be a smooth representation of $\Mp_{2n}$. If $0\le l\le n+j-2$, then
\[
\begin{split}
J_{\psi_l}\left((\sigma\rtimes\wt\tau)\otimes
\Omega_{W_{n+j-(l+1)}}\right)
&\stackrel{\mathrm{s.s}}{\simeq}
\bigoplus_{\substack{l+1-n\le t\le l\\0\le t\le1}}
|\cdot|^{\frac{1-t}{2}}\sigma^{(t)}\rtimes
J_{\psi_{l-t}}\left(\wt\tau\otimes
\Omega_{W_{n+t-(l+1)}}\right)\\
&\quad\oplus\delta_{0,l}\,
\sigma^{(1)}\rtimes(\wt\tau\otimes\omega_{W_n}).
\end{split}
\]
\end{thm}

The same argument, with the linear and metaplectic members interchanged,
gives the following companion formula.  We record it because it will be
used explicitly in Proposition~\ref{key3}.

\begin{thm}\label{ind3op}
For an integer \(j\ge1\) and a character \(\mu\) of \(\GL_1\), let
\(\sigma=\mu(j)\), and let \(\tau\) be a smooth representation of
\(\Sp_{2n}\). If \(0\le l\le n+j-2\), then
\[
\begin{split}
J_{\psi_l}\left((\sigma\rtimes\tau)\otimes
\Omega_{W_{n+j-(l+1)}}\right)
&\stackrel{\mathrm{s.s.}}{\simeq}
\bigoplus_{\substack{l+1-n\le t\le l\\0\le t\le1}}
|\cdot|^{\frac{1-t}{2}}\sigma^{(t)}\rtimes
J_{\psi_{l-t}}\left(\tau\otimes
\Omega_{W_{n+t-(l+1)}}\right)\\
&\quad\oplus\delta_{0,l}\,
\sigma^{(1)}\rtimes(\tau\otimes\omega_{W_n}).
\end{split}
\]
\end{thm}

\begin{proof}
The proof is the same mirabolic-filtration argument as for
Theorem~\ref{ind3}.  Indeed, that argument is independent of the
genuineness parity of the input.  For a non-genuine inducing datum,
tensoring with the Weil representation, defined with respect to
\(\psi^{-1}\), changes the parity of every remaining Levi factor, and
its restriction identifies a linear general-linear datum \(\rho\)
with \(\rho_\psi\).
The Bernstein--Zelevinsky derivatives, modulus characters, and the
exceptional \(l=0\) term are otherwise unchanged, giving the displayed
formula.
\end{proof}

By applying Theorems~\ref{ind3} and~\ref{ind3op}, we establish two
vanishing propositions, Proposition~\ref{key} and
Proposition~\ref{key3}, both of which will be used in the subsequent
discussion.  These propositions do not appear in \cite{H}.
To set the stage for these, we first introduce some relevant notations and prove a lemma.

For a character $\chi$ of $\GL_1$, we denote by $e(\chi)$ a real number such that the product $\chi \cdot |\cdot|^{-e(\chi)}$ forms a unitary character.
For \(1\le a\le n\), let \(J_{\U_{n,a}}\) denote the
unnormalized Jacquet functor with respect to \(\U_{n,a}\) and the
trivial character.

\noindent Note that there is an exact sequence
\begin{align} \label{arrow}
0 \longrightarrow \cind_{\wt{Q}_{n,1}}^{\wt{\P}_{n,n}}((|\cdot|^{\frac{1}{2}})_{\psi}\boxtimes \psi^{-1}) {\longrightarrow} \omega_{W_n}\vert_{\wt{\P}_{n,n}}{\longrightarrow} (|\cdot|^{\frac{1}{2}})_{\psi} \longrightarrow 0
\end{align}
(see the proof of \cite[Theorem 16.1]{Gan2}.)

\begin{lem}\label{kl}
Assume $k\ge2$.  Let $\mu$ and
$\{\chi_i\}_{1\le i\le k}$ be unramified characters of $\GL_1$ such that
\(
-\frac12<e(\mu)\le0.
\)
For $z\in\CC$, put $\mu_z=\mu|\cdot|^z$, and let $\chi_{i,z}$ be either
$\chi_i$ or $\chi_i|\cdot|^z$.  We assume
\(
0\le e(\chi_i)<\frac12
\quad\text{whenever the family $\chi_{i,z}$ is static, i.e. }\chi_{i,z}=\chi_i.
\)
No restriction on $e(\chi_i)$ is needed when
$\chi_{i,z}=\chi_i|\cdot|^z$.  Assume in addition that at least one
family $\chi_{i,z}$ is moving.  Thus, together with $\mu_z$, at least
two inducing characters depend on $z$.  Define
\(
\pi_{k+1,z}
=
\chi_{1,z}\times\cdots\times\chi_{k,z}\times\mu_z.
\)
Let \(\mc E\) be a principal series representation of
\(\Sp_{2(k-2)}\).  Let \(\mc E_{k+1}\) be an irreducible
\(\psi^{(1)}\)-non-generic subquotient of
\(
\mu(3)\rtimes\mc E.
\)
Then
\[
\Hom_{\Sp_{2k+2}}
\left(
\Ind_{\wt\P_{k+1,k+1}}^{\Mp_{2k+2}}
\bigl((\pi_{k+1,z})_\psi\bigr)
\otimes\omega_{W_{k+1}}\otimes\mc E_{k+1},
\CC
\right)=0
\]
for $z$ outside a discrete subset of $\CC$.

The analogous assertion also holds with the linear and metaplectic
members interchanged.  More precisely, if
\(\widetilde{\mc E}\) is a genuine principal series of
\(\Mp_{2(k-2)}\) and
\(\widetilde{\mc E}_{k+1}\) is an irreducible
\(\psi^{(1)}\)-non-generic
subquotient of
\[
\mu(3)\rtimes\widetilde{\mc E},
\]
then
\[
\Hom_{\Sp_{2k+2}}
\left(
\Ind_{\P_{k+1,k+1}}^{\Sp_{2k+2}}
(\pi_{k+1,z})
\otimes\omega_{W_{k+1}}
\otimes\widetilde{\mc E}_{k+1},
\CC
\right)=0
\]
outside a discrete subset of \(\CC\).
\end{lem}

\begin{rem}\label{kl-rem}
The condition that two inducing characters move with \(z\) is used
essentially.

If one allowed \(j=1\), the non-dual \(t=1\) subquotient in the geometric-lemma filtration would transfer \(\mu|\cdot|^{-1}\).  After the modulus twist its
exponent would be
\[
\frac{k+1}{2}+e(\mu),
\]
which equals the upper endpoint of \eqref{length3-range} when
\(e(\mu)=0\).  In the present setting the derivative must remove both
moving characters, so \(j\ge2\); the strict inequality
\(e(\mu)>-\frac12\) then excludes this endpoint.
\end{rem}

\begin{proof}
By Frobenius reciprocity, the asserted Hom-space is isomorphic to
\[
\Hom_{\P_{k+1,k+1}}
\left(
\delta_{\P_{k+1,k+1}}^{-\frac12}
(\pi_{k+1,z})_\psi\otimes\omega_{W_{k+1}}
\otimes\mc E_{k+1},
\CC
\right).
\]
Recall that
\[
\delta_{\P_{k+1,k+1}}=|\cdot|^{k+2},
\qquad
\delta_{\P_{k+1,k+1-j}}=\delta_{Q_{k+1,j}}=|\cdot|^{k+j+2}.
\]
We apply the exact sequence
\begin{equation}\label{length3-arrow}
0\longrightarrow
\cind_{\wt Q_{k+1,1}}^{\wt\P_{k+1,k+1}}
\bigl((|\cdot|^{1/2})_\psi\boxtimes\psi^{-1}\bigr)
\longrightarrow
\omega_{W_{k+1}}|_{\wt\P_{k+1,k+1}}
\longrightarrow
(|\cdot|^{1/2})_\psi
\longrightarrow0.
\end{equation}
For the compactly induced term, Frobenius reciprocity reduces the
problem to the restriction of \(\pi_{k+1,z}\) to the mirabolic subgroup
\(\mc P_{k+1,1}\). By the Bernstein--Zelevinsky mirabolic filtration
\cite[Section~3.5]{BZ}, this restriction has a finite filtration whose
\(j\)-th successive quotient is
\[
\cind_{\mc P_{k+1,j}}^{\mc P_{k+1,1}}
\left(
|\det|^{j/2}(\pi_{k+1,z})^{(j)}
\boxtimes
\lambda_{k+1}|_{\mc N_{k+1,j}}
\right),
\qquad 1\le j\le k+1.
\]
Applying Frobenius reciprocity to each successive quotient and using
the modulus-character identities above, the corresponding Hom-space
reduces to
\begin{equation}\label{length3-jac}
\Hom_{\GL_{k+1-j}}
\left(
|\cdot|^{-\frac{k+j+1}{2}}(\pi_{k+1,z})^{(j)}
\otimes
J_{\psi^{(1)}_{j-1}}
\bigl(J_{\U_{k+1,k+1-j}}(\mc E_{k+1})\bigr),
\CC
\right).
\end{equation}
Here the restriction of \(\lambda_{k+1}^{-1}\) to the first
\(j-1\) simple-root coordinates, together with the rank-one character
\(\psi^{-1}\) in \eqref{length3-arrow} on the last coordinate, is
conjugate to \(\psi^{(1)}_{j-1}\).

Write
\[
J_{\U_{k+1,k+1-j}}(\mc E_{k+1})^{\mathrm{s.s.}}
\simeq
\bigoplus_{i\in I}A_i\boxtimes B_i
\]
as a $\GL_{k+1-j}\times\Sp_{2j}$-module, and let
\(
I_0=\{i\in I:J_{\psi^{(1)}_{j-1}}(B_i)\ne0\}.
\)
For $i\in I_0$, the representation $B_i$ is generic.  If $j=k+1$,
\eqref{length3-jac} is the full Whittaker Jacquet module of
$\mc E_{k+1}$ and hence vanishes.  We therefore assume
$1\le j\le k$.

Let $\Pi_z$ be an irreducible constituent of
$|\cdot|^{-(k+j+1)/2}(\pi_{k+1,z})^{(j)}$.  By the Leibniz rule for
Bernstein--Zelevinsky derivatives, $\Pi_z^\vee$ is a constituent of a
principal series whose inducing characters are selected from
\[
|\cdot|^{\frac{k+1}{2}}\chi_{1,z}^{-1},\ldots,
|\cdot|^{\frac{k+1}{2}}\chi_{k,z}^{-1},
|\cdot|^{\frac{k+1}{2}}\mu_z^{-1}.
\]
If one of the characters retained in $\Pi_z^\vee$ depends on $z$, its
cuspidal support cannot agree with the fixed support of $A_i$ except for
a discrete set of $z$.  Otherwise the derivative has removed both
moving inducing characters.  In particular, $j\ge2$, and every exponent
in the support of $\Pi_z^\vee$ lies in
\begin{equation}\label{length3-range}
\left(\frac{k}{2},\frac{k+1}{2}\right].
\end{equation}

We now apply the geometric lemma to
$J_{\U_{k+1,k+1-j}}(\mc E_{k+1})$.  Put
\(q=k+1-j\). The successive subquotients in the geometric-lemma filtration, which we refer to as cells, are first indexed are by
\(
\max\{0,3-j\}\le t\le\min\{q,3\},
\)
where $t$ is the number of coordinates of the $\GL_3$-block transferred
to the $\GL_{k+1-j}$-factor.

If $t=0$, then $j\ge3$ and the $\Sp_{2j}$-factor is a subquotient of
\[
\mu(3)\rtimes\sigma.
\]
The character $\mu(3)$ has zero Whittaker module.  By the heredity of
Whittaker models \cite{Rod}, the displayed induced representation has
zero Whittaker module.  Since the Whittaker functor is exact, none of its
irreducible subquotients is generic; hence no constituent from this cell
belongs to $I_0$.

Suppose $t\ge1$, and put $c_{k,j}=(k+j+2)/2$.  The normalized Jacquet
data of the character $\mu(3)$ must be read from the small-exponent end
of its segment.  More precisely, the \(t\)-cells are indexed by
\(t_1,t_2\ge0\) with \(t_1+t_2=t\), where \(t_1\) coordinates are
dualized and \(t_2\) coordinates retain their orientation.  The
transferred \(\GL_t\)-datum has supercuspidal support
\begin{equation}\label{length3-cells}
\left\{
\mu^{-1}|\cdot|^{1-r}:0\le r<t_1
\right\}
\ \cup\
\left\{
\mu|\cdot|^{t-2-s}:0\le s<t_2
\right\}.
\end{equation}
Thus \eqref{length3-cells} includes the mixed cells as well as the two
extreme orientations.  For example, when \(t=2\) and
\((t_1,t_2)=(1,1)\), it is
\(\{\mu^{-1}|\cdot|,\mu\}\).

After the modulus twist \( |\cdot|^{c_{k,j}}\), the support in
\eqref{length3-cells} contains a character of exponent strictly larger
than \((k+1)/2\).  Indeed, if \(t_1\ge1\), it contains
\(\mu^{-1}|\cdot|\), whose exponent after the twist is
\[
c_{k,j}+1-e(\mu)>\frac{k+1}{2}.
\]
If \(t_1=0\), the smallest exponent in the transferred support is
\[
c_{k,j}+e(\mu)-1
=\frac{k+j}{2}+e(\mu)
>\frac{k+1}{2},
\]
where the last strict inequality uses \(j\ge2\) and
\(e(\mu)>-\frac12\).
This is incompatible with \eqref{length3-range}.  Thus the complete
cuspidal supports of $A_i$ and $\Pi_z^\vee$ are different, and
\cite[Theorem~2.9]{BZ} gives
\(
\Hom_{\GL_{k+1-j}}(A_i,\Pi_z^\vee)=0
\)
outside a discrete set.  Hence every term arising from the left-hand
term of \eqref{length3-arrow} vanishes outside a discrete set.

The quotient in \eqref{length3-arrow} gives a Hom-space of the form
\[
\Hom_{\GL_{k+1}}
\left(
(|\cdot|^{-1}\delta_{\P_{k+1,k+1}})^{-1/2}\pi_{k+1,z}
\otimes J_{\U_{k+1,k+1}}(\mc E_{k+1}),
\CC
\right).
\]
The second factor is fixed and of finite length, whereas the central
character of $\pi_{k+1,z}$ varies nontrivially with $z$.  This term also
vanishes outside a discrete set, proving the linear assertion.

For the metaplectic assertion, use the same mirabolic filtration after
tensoring the two genuine factors.  All resulting Hom-spaces descend to
the corresponding linear Levi groups.  The metaplectic geometric lemma
\cite[Theorem~3.3 and Proposition~4.5]{HM} gives the same cells.  On
each dualized part of a transferred general-linear block, \(\rho\) is
replaced by \(\varepsilon_{-1}\rho^\vee\); the non-dual part is
unchanged.  The residual genuine metaplectic factor induced from the
untransferred part of
\((\mu(3))_\psi\) is unchanged.  Thus \(\varepsilon_{-1}\) occurs
only on the transferred general-linear factor.  Since it is quadratic
and unitary, it changes neither real exponents nor any of the support
comparisons above.  The preceding argument therefore proves the
metaplectic assertion as well.
\end{proof}

\begin{prop}\label{key}
Let \(n\ge2\).  Let $\mu$ and
$\{\chi_i\}_{1\le i\le n}$ be unramified characters of
$\GL_1$ with
\(
-\frac12<e(\mu)\le0.
\)
Let $0\le l\le n-3$.
For $z\in\CC$, put $\mu_z=\mu|\cdot|^z$, and let $\chi_{i,z}$ be either
$\chi_i$ or $\chi_i|\cdot|^z$.  Assume
\(
0\le e(\chi_i)<\frac12
\quad\text{for every static family }\chi_{i,z}=\chi_i;
\)
no exponent restriction is imposed on the base character of a moving
family $\chi_i|\cdot|^z$.  Assume that $\chi_{1,z}$ is moving.  We use
the recursive order
\(
\pi_{n+1,z}
=
\chi_{n,z}\times\cdots\times\chi_{2,z}
\times(\chi_{1,z}\times\mu_z),
\)
where $\times$ denotes normalized parabolic induction.  Thus
$\chi_{n,z},\ldots,\chi_{2,z}$ are the successive outer $\GL_1$-blocks,
while the two moving characters $\chi_{1,z}$ and $\mu_z$ remain in the
innermost principal series throughout the induction below.
Let $\mc E_{n-l}$ be an irreducible
$\psi^{(1)}$-non-generic subquotient of
\(
\mu(3)\rtimes\mc E,
\)
where $\mc E$ is a principal series representation of
$\Sp_{2(n-l-3)}$.  Then
\[
\Hom_{\Sp_{2n-2l}}
\left(
J_{\psi_l}
\left(
\Ind_{\wt\P_{n+1,n+1}}^{\Mp_{2n+2}}
\bigl((\pi_{n+1,z})_\psi\bigr)
\otimes\Omega_{W_{n-l}}
\right)
\otimes\mc E_{n-l},
\CC
\right)=0
\]
for $z$ outside a discrete subset of $\CC$.
\end{prop}

\begin{proof}
Denote the assertion by \(\mathsf V_n\), and argue by induction on
\(n\).  The range is empty when \(n=2\).  Assume \(\mathsf V_k\), and
fix \(0\le l_0\le k-2\).  Put
\[
\wt\tau_{t,z}
=
\Ind_{\wt\P_{t,t}}^{\Mp_{2t}}
\bigl((\pi_{t,z})_\psi\bigr).
\]
By normalized induction in stages, the recursive order in the statement
has \(\chi_{k+1,z}\) as the outer \(\GL_1\)-block.  Applying
Theorem~\ref{ind3} with this block gives, up to semisimplification,
\begin{align*}
J_{\psi_{l_0}}
(\wt\tau_{k+2,z}\otimes\Omega_{W_{k+1-l_0}})
\simeq{}&
\bigoplus_{0\le s\le\min\{1,l_0\}}
|\cdot|^{\frac{1-s}{2}}\chi_{k+1,z}(1-s)\rtimes
J_{\psi_{l_0-s}}
(\wt\tau_{k+1,z}\otimes\Omega_{W_{k+s-l_0}})
\\
&\oplus\delta_{0,l_0}
(\wt\tau_{k+1,z}\otimes\omega_{W_{k+1}}).
\end{align*}
When \(l_0=0\), the last summand has zero Hom against
\(\mc E_{k+1}\) by Lemma~\ref{kl}.  Notice that the nonempty range
forces \(k\ge2\), exactly as required there.

For the remaining summands, apply Frobenius reciprocity and write
\[
J_{\U_{k+1-l_0,1-s}}
\bigl(\mc E_{k+1-l_0}\bigr)^{\mathrm{s.s.}}
\simeq
\bigoplus_{i\in I}
\delta_{\P_{k+1-l_0,1-s}}^{1/2}X_i\boxtimes Y_i.
\]
It is enough to show, for every \(i\), that either
\begin{equation}\label{length3-last0}
\Hom_{\GL_{1-s}}
\left(
|\cdot|^{\frac{1-s}{2}}\chi_{k+1,z}(1-s)\otimes X_i,
\CC
\right)=0
\end{equation}
outside a discrete set, or that the corresponding Hom involving \(Y_i\)
vanishes by the induction hypothesis.

The geometric lemma has only \(t=0,1\), since the Jacquet parabolic has a
\(\GL_{1-s}\)-factor.  If \(s=1\), then
\(\U_{k+1-l_0,0}\) is trivial, so \(t=0\) and
\[
Y_i=\mc E_{k+1-l_0}.
\]
In particular, \(Y_i\) is \(\psi^{(1)}\)-non-generic, and
\(\mathsf V_k\) applies with \(l=l_0-1\).  If \(s=0\) and \(t=0\),
this cell can occur only when \(k-l_0\ge3\).  In that case \(Y_i\) is
an irreducible subquotient of
\[
\mu(3)\rtimes\mc E'.
\]
for a principal series \(\mc E'\).  It is non-generic, hence in
particular \(\psi^{(1)}\)-non-generic, by heredity \cite{Rod} and the
exactness of the Whittaker functor, since \(\mu(3)\) is a character of \(\GL_3\) with
zero Whittaker module.  Thus
\(\mathsf V_k\) applies with \(l=l_0\).

It remains to consider \(s=0\) and \(t=1\).  The normalized Jacquet
module of \(\mu(3)\) contributes
\[
X_i\simeq\mu|\cdot|^{-1}
\qquad\text{or}\qquad
X_i\simeq\mu^{-1}|\cdot|.
\]
When \(\chi_{k+1,z}\) is moving, \eqref{length3-last0} vanishes outside a
discrete set.  Suppose that it is static.  A nonzero
\eqref{length3-last0} would require
\[
e(X_i)=-\frac12-e(\chi_{k+1})
\in\left(-1,-\frac12\right].
\]
On the other hand, the two displayed possibilities for \(X_i\) have
exponents in
\[
\left(-\frac32,-1\right]
\qquad\text{and}\qquad
\left[1,\frac32\right),
\]
respectively.  Neither interval meets
\(\left(-1,-\frac12\right]\).  Hence the \(t=1\) cells vanish as well,
and there is no boundary term.  This proves \(\mathsf V_{k+1}\) and
completes the induction.
\end{proof}

The following proposition is the metaplectic analogue of Proposition~\ref{key}.

\begin{prop}\label{key3}
Let \(n\ge2\).  Let $\mu$ and
$\{\chi_i\}_{1\le i\le n}$ be unramified characters of
$\GL_1$ with
\(
-\frac12<e(\mu)\le0.
\)
Let $0\le l\le n-3$.
For $z\in\CC$, put $\mu_z=\mu|\cdot|^z$, and let $\chi_{i,z}$ be either
$\chi_i$ or $\chi_i|\cdot|^z$.  Assume
\(
0\le e(\chi_i)<\frac12
\quad\text{for every static family }\chi_{i,z}=\chi_i;
\)
no exponent restriction is imposed on the base character of a moving
family $\chi_i|\cdot|^z$.  Assume that $\chi_{1,z}$ is moving, and use
the recursive order
\(
\pi_{n+1,z}
=
\chi_{n,z}\times\cdots\times\chi_{2,z}
\times(\chi_{1,z}\times\mu_z).
\)
Here again $\times$ denotes normalized parabolic induction, with the
displayed recursive order.
Let $\mc E_{n-l}$ be an irreducible
$\psi^{(1)}$-non-generic subquotient of
\(
\mu(3)\rtimes\mc E,
\)
where $\mc E$ is a genuine principal series representation of
$\Mp_{2(n-l-3)}$.  Then
\[
\Hom_{\Sp_{2n-2l}}
\left(
J_{\psi_l}
\left(
\Ind_{\P_{n+1,n+1}}^{\Sp_{2n+2}}
(\pi_{n+1,z})
\otimes\Omega_{W_{n-l}}
\right)
\otimes\mc E_{n-l},
\CC
\right)=0
\]
for $z$ outside a discrete subset of $\CC$.
\end{prop}

\begin{proof}
Denote the assertion by \(\widetilde{\mathsf V}_n\), and argue by
induction on \(n\).  The range is empty when \(n=2\).  Assume
\(\widetilde{\mathsf V}_k\), and fix \(0\le l_0\le k-2\).
We use the same recursive order and the companion
Fourier--Jacquet formula in Theorem~\ref{ind3op}.  The base mirabolic
term for \(l_0=0\) vanishes by the metaplectic part of
Lemma~\ref{kl}.

For the other terms, the geometric lemma for metaplectic groups
\cite[Theorem~3.3 and Proposition~4.5]{HM} gives the same \(s\)- and
\(t\)-cells as in the proof of Proposition~\ref{key}.  After tensoring
the two genuine factors, the relevant Hom-spaces descend to the
corresponding linear Levi groups.  On each dualized part of the
transferred general-linear block, \(\rho\) is replaced by
\(\varepsilon_{-1}\rho^\vee\); the non-dual part is unchanged.  The
residual genuine metaplectic factor induced from the untransferred
coordinates of
\((\mu(3))_\psi\) is unchanged.  In particular,
\(\varepsilon_{-1}\) occurs only on the transferred general-linear
factor.  When \(s=1\), the group
\(\U_{k+1-l_0,0}\) is trivial, so the metaplectic factor is
\(\mc E_{k+1-l_0}\) itself and is
\(\psi^{(1)}\)-non-generic.  This cell is eliminated by
\(\widetilde{\mathsf V}_k\) with \(l=l_0-1\).  The \(s=0,t=0\) cell
can occur only when \(k-l_0\ge3\); heredity \cite{Rod} and exactness of
Whittaker models show that the relevant metaplectic subquotient induced from
\((\mu(3))_\psi\) remains non-generic, hence in particular
\(\psi^{(1)}\)-non-generic, so
\(\widetilde{\mathsf V}_k\) with \(l=l_0\) eliminates it.

It remains to treat \(s=0,t=1\).  The fixed quadratic character coming
from \(\gamma_\psi^2\) is precisely \(\varepsilon_{-1}\), and it does
not change real exponents.  The two transferred \(\GL_1\)-data are
\[
\mu|\cdot|^{-1}
\qquad\text{and}\qquad
\varepsilon_{-1}\mu^{-1}|\cdot|.
\]
As in the proof of Proposition~\ref{key}, a nonzero static term would
require the real exponent of the transferred datum to lie in
\(\left(-1,-\frac12\right]\).  The two displayed data instead have
exponents in
\[
\left(-\frac32,-1\right]
\qquad\text{and}\qquad
\left[1,\frac32\right),
\]
respectively.  Hence the static terms vanish, while a moving term
vanishes outside a discrete set of \(z\).  There is therefore no
boundary term, and the induction proves the proposition.
\end{proof}



\section{\textbf{Residual representation}}\label{sec:residual}
In this section, we introduce the residual Eisenstein series representations and review some of their properties related to $L$-functions. Throughout the section we fix discrete tempered global $A$-parameters $M_{\pi}$ for $\Sp_{2k}$ and $M_{\pi'}$ for $\Mp_{2k}$, and we let
\[
\pi\in\Pi_{\phi_{M_{\pi}}}^{\mathrm{aut}},
\qquad
\pi'\in\Pi_{\phi_{M_{\pi'}}}^{\mathrm{aut}}
\]
be members of the corresponding associated global $L$-packets; both are cuspidal by Lemma~\ref{tempered-parameter-cuspidal}. Let $\sigma$ be an irreducible unitary cuspidal automorphic representation of $\GL_a(\A)$.

For $g\in \Mp_{2k+2a}(\A)$ (resp. $\Sp_{2k+2a}(\A)$),
write $g=m_gu_gk_g$ for $m_g\in \wt{\M}_{k+a,a}(\A)$ (resp. $m_g \in \M_{k+a,a}(\A)$), $\ u_g\in \U_{k+a,a}(\A), \ k_g\in \wt{K}_{k+a,a}$ (resp. $k_g \in K_{k+a,a}$).
Since $\wt{\M}_{k+a,a}\simeq \GL(X_a) \times_{\{\pm1\}} \Mp_{2k}$ (resp. $\M_{k+a,a}\simeq \GL(X_a) \times \Sp_{2k}$),
we decompose $m_g \simeq (m_1,\epsilon) \cdot (m_2,\epsilon)$ (resp. $m_g=m_1 \cdot m_2$),
where $m_1 \in \GL_a(\A) , m_2 \in \Sp_{2k}(\A), \epsilon \in \{\pm1\}.$
Define $d(g)=|\det{m_1}|_{\A}$.   We set $\sigma_{\psi}\left( g,\epsilon\right)=\epsilon\cdot \sigma(g) \prod_v \gamma(\det{g_v},\psi_v)^{-1}$ for $g \in \GL_a(\A)$ and $\epsilon \in \{\pm1\}.$ For every $a \in \A^{\times}$, the product $\prod_v \gamma(a_v,\psi_v)$ is well defined in view of \cite[Proposition A.11]{R}.
Note that $\sigma_{\psi} \boxtimes \pi'$ (resp. $\sigma \boxtimes \pi$) is a representation of $\wt{\M}_{k+a,a}(\A)$ (resp. $\M_{k+a,a}(\A)$). Let $\phi$ be an element in $\As_{\P_{k+a,a}}^{\sigma_{\psi} \boxtimes \pi'}(\Mp_{2k+2a}))$ (resp. $\As_{\P_{k+a,a}}^{\sigma \boxtimes \pi}(\Sp_{2k+2a})$). For any $z \in \CC$, write $\phi_z\coloneqq d^z \cdot \phi$. Then $\phi_z$ belongs to $\As_{\P_{k+a,a}}^{|\cdot|^z (\sigma_{\psi} \boxtimes \pi')}(\Mp_{2k+2a}))$ (resp. $\As_{\P_{k+a,a}}^{|\cdot|^z (\sigma \boxtimes \pi)}(\Sp_{2k+2a})$) for all $z\in \CC$ and call $\phi_z$ a flat section. Note that the restriction of a flat section $\phi_z$ to $\wt{K}_{k+a,a}$ (resp. $K_{k+a,a}$) is the restriction of $\phi$ to $\wt{K}_{k+a,a}$ (resp. $K_{k+a,a}$), which is independent of $z\in \CC$.  Furthermore, the Eisenstein series associated to $\phi_z$ is defined by
\[
E(g,\phi_z,0)=E(g,\phi,z)\coloneqq \sum_{\P_{k+a,a}(F) \bs \Sp_{2k+2a}(F)}\phi(\gamma g)d(\gamma g)^z.
\]
This series converges absolutely when $\Re(z)$ is sufficiently large and admits a meromorphic continuation to the whole complex plane. \cite[IV.1.8.]{Mo}

As is well known, the analytic properties of the Eisenstein series is governed by the global intertwining operator $M_z$. Here, we review the definition of $M_z$. Let $w_0$ be a Weyl element in $\Sp_{2k+2a}$ which takes $\U_{k+a,a}$ to its opposite $\U_{k+a,a}^{-}$. Then for an arbitrary $f \in \As_{\P_{k+a,a}}^{|\cdot|^z (\sigma_{\psi} \boxtimes \pi')}(\Mp_{2k+2a}))$ (resp. $\As_{\P_{k+a,a}}^{|\cdot|^z (\sigma \boxtimes \pi)}(\Sp_{2k+2a})$), $M_z(f)$ is defined by \[M_z(f)(\cdot)\coloneqq \int_{[\U_{k+a,a}]} f(w_0^{-1}u\cdot) du. \]

Then, $M_z : I(z,\sigma_{\psi} \boxtimes \pi') \to I(-z,w_0 (\sigma_{\psi} \boxtimes \pi'))$ (resp. $M_z:I(z,\sigma \boxtimes \pi) \to I(-z,w_0 (\sigma \boxtimes \pi))$) is a $\Mp_{2k+2a}(\A)$ (resp. $\Sp_{2k+2a}(\A)$)-invariant map. If $f=\boxtimes_v f_v$ is factorizable, then $M_z(f)=\prod_v M_{z,v}(f_v)$, where $M_{z,v}:\text{Ind}_{\wt{\P}(F_v)}^{\Mp_{2k+2a}(F_v)}\delta_{\P}^{z}\cdot (\sigma_{\psi,v} \boxtimes \pi_v') \mapsto \text{Ind}_{\wt{\P}(F_v)}^{\Mp_{2k+2a}(F_v)}\delta_{\P}^{-z} \cdot w_0(\sigma_{\psi,v} \boxtimes \pi_v')$ (resp. $M_{z,v} : \text{Ind}_{\P(F_v)}^{\Sp_{2k+2a}(F_v)}\delta_{\P}^{z}\cdot (\sigma_{\psi,v} \boxtimes \pi_v') \mapsto \text{Ind}_{\P(F_v)}^{\Sp_{2k+2a}(F_v)}\delta_{\P}^{-z} \cdot w_0(\sigma_{\psi,v} \boxtimes \pi_v')$) is the local intertwining operator. (Note that $M_{z,v}$ is associated to $\sigma_v,\pi_v' \ (\text{resp. } \pi_v),\psi_v \text{ and } w_0$.)

Define the local normalizing operator $N_{z,v}$ by multiplying $M_{z,v}$ with the normalzing factor
\[\alpha_v\coloneqq \frac{L_v(z+1,\sigma_{v} \times \pi_{v}')\cdot L_v(2z+1,\sigma_{v}, Sym^2)\cdot \epsilon_v(z,\sigma_{v} \times \pi_{v}',\psi_v)\cdot \epsilon_v(2z,\sigma_{v},Sym^2,\psi_v) }{L_v(z,\sigma_{v} \times \pi_{v}')\cdot L_v(2z,\sigma_{v},Sym^2)}\]
\[(\text{resp.} \ \alpha_v\coloneqq \frac{L_v(z+1,\sigma_{v} \times \pi_{v})\cdot L_v(2z+1,\sigma_{v}, \wedge^2)\cdot \epsilon_v(z,\sigma_{v} \times \pi_{v},\psi_v)\cdot \epsilon_v(2z,\sigma_{v},\wedge^2,\psi_v) }{L_v(z,\sigma_{v} \times \pi_{v})\cdot L_v(2z,\sigma_{v},\wedge^2)})\]
where the local $L$-factors and $\epsilon$-factors are defined through the localization of the global $A$-parameters of $\pi'$ and $\pi$ as described in \cite{Ae} and \cite{gi}. Then
\begin{equation}\label{norm1}
    M_z(f)=\frac{L_{\psi}(z,\sigma \times \pi')\cdot L(2z,\sigma,Sym^2)}{L_{\psi}(z+1,\sigma\times \pi')\cdot L(2z+1,\sigma, Sym^2)\cdot \epsilon(z,\sigma \times \pi',\psi)\cdot \epsilon(2z,\sigma,Sym^2,\psi) }\cdot \prod_v N_{z,v}(f_v)
\end{equation}
\begin{equation}\label{norm2}
    \big(\text{resp.} \ M_z(f)=\frac{L(z,\sigma \times \pi)\cdot L(2z,\sigma,\wedge^2)}{L(z+1,\sigma \times \pi)\cdot L(2z+1,\sigma, \wedge^2)\cdot \epsilon(z,\sigma \times \pi,\psi)\cdot \epsilon(2z,\sigma,\wedge^2,\psi) }\cdot \prod_v N_{z,v}(f_v)\big),
\end{equation}
where $L_{\psi}(z,\sigma \times \pi')$ \big(resp. $L(z,\sigma \times \pi)$\big) is the Rankin-Selberg $L$-function $L(z,\sigma\times M_{\pi'})$ \big(resp. $L(z,\sigma\times M_{\pi})$\big).

The following is Proposition~\ref{3}, which is an analog of \cite[Theorem~5.1]{JZ} and can be proved in the same way.

\begin{prop}[cf. {\cite[Theorem 4.1]{JZ1}}]\label{p1}Let $\pi'\in\Pi_{\phi_{M_{\pi'}}}^{\mathrm{aut}}$ (resp. $\pi\in\Pi_{\phi_{M_{\pi}}}^{\mathrm{aut}}$) and $\sigma$ be as above.
Choose an arbitrary factorizable $\phi=\otimes_v \phi_v \in \As_{\P_{k+a,a}}^{\sigma_{\psi} \boxtimes \pi'}(\Mp_{2k+2a}))$ (resp. $\As_{\P_{k+a,a}}^{\sigma \boxtimes \pi}(\Sp_{2k+2a})$). Then for each place $v$ of $F$, $N_{z,v}(\phi_{z,v})$ is   holomorphic and nonzero for $z\ge \frac{1}{2}$.
\end{prop}


The properties recorded in Proposition~\ref{3} and
Lemma~\ref{automatic-almost-tempered}, together with the shape of a
discrete tempered global $A$-parameter, play essential roles in the
following propositions.

\begin{prop} \label{p2}
Let \(M_{\pi}\) be a discrete tempered global \(A\)-parameter for
\(\Sp_{2k}\), let \(\pi\in\Pi_{\phi_{M_{\pi}}}^{\mathrm{aut}}\), and let
\(\sigma\) be an irreducible unitary cuspidal automorphic representation
of \(\GL_a(\A)\). For
\[
\phi\in
\As_{\P_{k+a,a}}^{\sigma\boxtimes\pi}(\Sp_{2k+2a}),
\]
the Eisenstein series \(E(\phi,z)\) has at most a simple pole at
\(z=\frac12\) and \(z=1\). As \(\phi\) varies, it has a pole at
\(z=\frac12\) if and only if
\[
L\left(\frac12,\sigma\times\pi\right)\ne0
\quad\text{and}\quad
L(z,\sigma,\wedge^2)\ \text{has a pole at \(z=1\)}.
\]
Furthermore, it has a pole at \(z=1\) if and only if
\(L(z,\sigma\times\pi)\) has a pole there.
\end{prop}

\begin{proof} By the Langlands theory of Eisenstein series (\cite{la2}), the analytic properties of the family of $E(\phi,z)$ are controlled by those of the family
\[E_{\P_{k+a,a}}(\phi,z)=\phi_z+M_z(\phi_z).
\]
Therefore, $E(\phi,z)$ has a pole at $z=z_0$ of order $r$ if and only if $M_z(\phi_z)$ has a pole at $z=z_0$ of order $r$. For a factorizable $\phi=\otimes_v \phi_v \in \As_{\P_{k+a,a}}^{\sigma \boxtimes \pi}(\Sp_{2k+2a})$, \eqref{norm2} and Proposition~\ref{p1} tell us that $M_z(\phi_z)$ has a pole at $z=z_0$ of order $r$ if and only if
\[
\frac{L(z,\sigma \times \pi)\cdot L(2z,\sigma,\wedge^2)}
{L(z+1,\sigma\times \pi)\cdot L(2z+1,\sigma, \wedge^2)
\cdot \epsilon(z,\sigma \times \pi,\psi)
\cdot \epsilon(2z,\sigma,\wedge^2,\psi)}
\]
has a pole at $z=z_0$ of order $r$.

By \cite{JSS}, \cite{Sha1} and \cite[Theorem~1.3]{Gr},  $L(z,\sigma \times \pi)$ and $L(z,\sigma,\wedge^2)$ converges absolutely and does not vanish for $\Re(z) \ge 1$. Furthermore, $L(z,\sigma \times \pi)$ and $L(z,\sigma,\wedge^2)$ are holomorphic on $0<\Re(z)<1$ and they have possible simple pole at $z=1$. By putting all these together, the proposition follows.
\end{proof}

\begin{prop}\label{p3}
Assume that $\sigma$ is isomorphic to one of the isobaric summands of $M_{\pi}$. Then there exists $\phi \in \As_{\P_{k+a,a}}^{\sigma \boxtimes \pi}(\Sp_{2k+2a})$ such that $E(\phi,z)$ has a pole at $z=1$.
\end{prop}
\begin{proof}Note that $L(z,\sigma \times \sigma)=L(z,\sigma, \wedge^2)\cdot L(z,\sigma, \Sym^2)$. It is known that neither $L(z,\sigma, \wedge^2)$ and $L(z,\sigma, \Sym^2)$ has a zero at $z=1$. Since $M_{\pi}$ is a discrete tempered global $A$-parameter for $\Sp_{2k}$, each of its isobaric summands is of orthogonal type by the definition recalled in Subsection~\ref{dgp}; hence $L(z,\sigma, \Sym^2)$ has a simple pole at $z=1$.
Therefore, $L(z,\sigma \times \sigma)$ has a simple pole at $z=1$ and it implies that $\sigma=\sigma^{\vee}$. 
Then the proposition follows from Proposition~\ref{p2}.
\end{proof}

Since the proofs of the following propositions are almost same with the above propositions, we omit the proofs.
\begin{prop}\label{p4}
Let $M_{\pi'}$ be a discrete tempered global $A$-parameter for $\Mp_{2k}$, let $\pi'\in\Pi_{\phi_{M_{\pi'}}}^{\mathrm{aut}}$, and let $\sigma$ be an irreducible unitary cuspidal automorphic representation of $\GL_a(\A)$.
For $\phi' \in  \As_{\P_{k+a,a}}^{\sigma_{\psi} \boxtimes \pi'}(\Mp_{2k+2a})$,
the Eisenstein series $E(\phi',z)$ has at most a simple pole at $z=\frac{1}{2}$ and $z=1$.
Moreover, it has a pole at $z=\frac{1}{2}$ as $\phi'$ varies
if and only if
$L_{\psi}(z,\sigma\times \pi')$ is non-zero at $z=\frac{1}{2}$
and $L(z,\sigma, \Sym^2)$ has a pole at $z=1$.
\end{prop}
\begin{proof}It directly follows from the computation of the constant terms of the Eisenstein series $E(\phi',z)$ in \cite[Sec. 3.2]{GJR}.
\end{proof}
\begin{prop}[{\cite[Proposition 5.3]{Y}}] \label{p5}
Assume that $\sigma$ is isomorphic to one of the isobaric summands of $M_{\pi'}$. Then there exists $\phi' \in \As_{\P_{k+a,a}}^{\sigma_{\psi} \boxtimes \pi'}(\Mp_{2k+2a})$ such that $E(\phi',z)$ has a pole at $z=1$.
\end{prop}

\begin{rem}When $\pi'$ and $\pi$ are irreducible globally generic cuspidal representations of $\Mp_{2k+2a}(\A)$ and $\Sp_{2k+2a}(\A)$, respectively, then Proposition~\ref{p2}--\ref{p5} are proved in \cite[Proposition 3.2]{GJR}, \cite[Proposition~5.1, Proposition~5.3]{Y}.
\end{rem}

Thanks to Proposition \ref{p2} and  Proposition \ref{p3}, we can define the residues of the Eisenstein series to be the limits
\[
\mathcal{E}^{0}(\phi)=\lim_{z \to \frac{1}{2}}(z-\frac{1}{2})\cdot E(\phi,z), \quad \mathcal{E}^{1}(\phi)=\lim_{z \to 1}(z-1)\cdot  E(\phi,z), \quad
\quad \phi \in \As_{\P_{k+a,a}}^{\sigma \boxtimes \pi}(\Sp_{2k+2a}).
\]
Let $\mathcal{E}^{0}(\sigma,\pi)$ (resp. $\mathcal{E}^{1}(\sigma,\pi)$) be the residual representation of $\Sp_{2k+2a}(\A)$ generated by $\mathcal{E}^{0}(\phi)$ (resp. $\mathcal{E}^{1}(\phi)$) for $\phi \in \As_{\P_{k+a,a}}^{\sigma \boxtimes \pi}(\Sp_{2k+2a})$.

\noindent Thanks to Proposition \ref{p4} and  Proposition \ref{p5}, we can similarly define the residues of the Eisenstein series to be the limits
\[
\mathcal{E}^{0}(\phi')=\lim_{z \to \frac{1}{2}}(z-\frac{1}{2})\cdot E(\phi',z), \quad \mathcal{E}^{1}(\phi')=\lim_{z \to 1}(z-1)\cdot  E(\phi',z), \quad
\quad \phi' \in \As_{\P_{k+a,a}}^{\sigma_{\psi} \boxtimes \pi'}(\Mp_{2k+2a}).
\]
Let $\mathcal{E}^{0}(\sigma,\pi')$ (resp. $\mathcal{E}^{1}(\sigma,\pi')$) be the residual representation of $\Mp_{2k+2a}$ generated by $\mathcal{E}^{0}(\phi')$ (resp. $\mathcal{E}^{1}(\phi')$) for $\phi' \in \As_{\P_{k+a,a}}^{\sigma_{\psi} \boxtimes \pi'}(\Mp_{2k+2a})$.

\begin{rem}  \label{r1}
If either \(L(z,\sigma,\wedge^2)\) or
\(L(z,\sigma,\Sym^2)\) has a pole at \(z=1\), then
\(\sigma\simeq\sigma^\vee\), and hence its central character
\(\omega_\sigma\) is quadratic. In the exterior-square case,
\(\sigma\) is of symplectic type; consequently \(a\) is even and
\(\omega_\sigma=\mathbf 1\) (see \cite{JS}). In the
symmetric-square case, \(\sigma\) is of orthogonal type, and no
analogous parity or trivial-central-character conclusion holds in
general.
\end{rem}

\section{\textbf{Reciprocal Non-vanishing of the Fourier--Jacobi periods}}\label{sec:reciprocal}
In this section, we shall prove our main theorem. To do this, we should prove a reciprocal non-vanishing theorem of the Fourier--Jacobi periods which has its own value but also crucial to prove our main theorem.

Let $m,a$ be positive integers and let $r$ be a positive integer. Write $n=m+r$ and let $\left(W_{n+a},\left<\ ,\ \right>\right)$ be a  symplectic spaces over $F$ of dimension $2(n+a)$. Fix maximal totally isotropic subspaces $X$ and $X^*$ of $W_{n+a}$, in duality, with respect to $\left<\cdot , \cdot\right>$.
Fix a complete flag in $X$
\(
0=X_0 \subset X_1 \subset \cdots \subset X_{n+a}=X,
\)
and choose a basis $\{e_1,e_2,\cdots,e_{n+a}\}$ of $X$
such that $\{e_1,\cdots,e_i\}$ is a basis of $X_i$ for each $1\le i \le n+a$.
Let $\{f_1,f_2,\cdots ,f_{n+a}\}$ be the basis of $X^*$ which is dual to the fixed basis $\{e_1,e_2,\cdots,e_{n+a}\}$ of $X$,
i.e., $\left<e_i,f_j\right>=\delta_{ij}$ for $1\le i,j \le n+a$,
where $\delta_{i,j}$ denotes the Kronecker delta.
For each $1\le i \le n+a$, write $X^*_i$ for the subspace of $X^*$ spanned by $\{f_1, f_2,\cdots ,f_i\}$ and write $Z_m$ (resp. $Z_m^*$) be the subspace of $X$ (resp. $X^*$) generated by $e_{r+a+1},\cdots, e_{n+a}$ (resp. $f_{r+a+1},\cdots, f_{n+a}$). We also write $Y_a$ (resp. $Y_a^*$) for the subspace of $X$ (resp. $X^*$) spanned by $e_{r+1},\cdots, e_{r+a}$ (resp. $f_{r+1},\cdots, f_{r+a}$). Then $X$ (resp. $X^*$) has the polar decomposition $X=X_r+Y_a+Z_m$ (resp. $X^*=X_r^*+Y_a^*+Z_m^*$). For a $F$-dimensional vector space $V$, let $\mc{S}(V)(\A)$ be the Schwartz space on $V(\A)$. For an arbitrary positive integer $k$, the Schodinger model gives rise to the global Weil representation $\Omega_{\psi^{-1},W_k}$ of $\mathcal{H}_{W_k} (\A) \rtimes \Mp_{2k}(\A)$ on $\mc{S}(X_k^*)(\A)$. As in the local case, we denote by $\omega_{\psi^{-1},W_{k}}$ the restriction of $\nu_{\psi^{-1},W_{k}}$ to $\Mp_{2k}(\A)$. We also define a generic character $\lambda_r$ of $\mc{N}_{r}(\A)$ as in Sec. 3. Then $\nu_{\psi^{-1},W_{k}}=\lambda_r^{-1} \otimes \Omega_{\psi^{-1},W_k}$ is a smooth representation of $H_{k+r,r}(\A)\coloneqq N_{k+r,r}(\A) \rtimes \Mp_{2k}(\A).$
Via the natural inclusion maps $N_{n,r} \hookrightarrow N_{n+r,r}$, $\Mp_{2m} \hookrightarrow \Mp_{2m+2a}$, we regard elements of $N_{n,r}$ and $\Mp_{2m}$ as those of $N_{n+r,r}$ and $\Mp_{2m+2a}$, respectively. For $h \in \GL(Y_a^*)$, put $h^* \in \GL(Y_a)$ satisfying $\left<hx, y\right>=\left< x,h^*y\right>$. Write $\mc{S}=\mathcal{S}(Y_a^*+Z_m^*)(\A).$ Since $\mc{S} \simeq \mc{S}(Y_a^*(\A)) \otimes \mc{S}(Z_m^*(\A))$, we describe the (partial) $N_{n+a,r}(\A) \rtimes \Mp_{2m+2a}(\A)$-action of $\nu_{\psi^{-1},W_{m+a}}$ on $\mc{S}(Y_a^*(\A)) \otimes \mc{S}(Z_m^*(\A))$ as follows: (see \cite[pp. 58-59]{At}.)

For $f_1 \otimes f_2 \in \mc{S}(Y_a^*(\A)) \otimes \mc{S}(Z_m^*(\A))$ and $y \in Y_a^*(\A), z \in Z_m^*(\A),$
\begin{align}
&
\left(
\nu_{\psi^{-1},W_{m+a}}((n,1))(f_1\otimes f_2)
\right)(y,z)
\notag\\
&\qquad
=f_1(y)
\left(\nu_{\psi^{-1},W_m}((n,1))f_2\right)(z),
\qquad n\in N_{n,r}(\A),
\label{w1}\\
&
\left(
\nu_{\psi^{-1},W_{m+a}}((1,\wt g_0))(f_1\otimes f_2)
\right)(y,z)
\notag\\
&\qquad
=f_1(y)
\left(\nu_{\psi^{-1},W_m}((1,\wt g_0))f_2\right)(z),
\qquad \wt g_0\in\Mp_{2m}(\A),
\label{w2}\\
&
\left(
\nu_{\psi^{-1},W_{m+a}}\bigl((1,(h_0,\zeta))\bigr)
(f_1\otimes f_2)
\right)(y,z)
\notag\\
&\qquad
=(|\cdot|^{1/2})_{\psi^{-1}}(\det h_0,\zeta)
f_1(h_0^*y)f_2(z),
\notag\\
&\qquad
(h_0,\zeta)\in\wt{\GL}(Y_a)(\A),
\label{w3}\\
&
\left(
\nu_{\psi^{-1},W_{m+a}}\bigl((x+x',0)\bigr)
(f_1\otimes f_2)
\right)(y,z)
\notag\\
&\qquad
=\psi^{-1}\left(
\left<y,x\right>+\frac12\left<x',x\right>
\right)f_1(y+x')f_2(z),
\notag\\
&\qquad
x\in Y_a(\A),\qquad x'\in Y_a^*(\A).
\label{w4}
\end{align}
These partial actions will be used in the proof of Lemma~\ref{l3}.

For $f \in \mc{S}=\mathcal{S}(Y_a^*+Z_m^*)(\A)$,
 its associated theta function $\Theta_{\psi^{-1},W_{m+a}}(f)(\cdot)$ is defined by
\[
\big(\Theta_{\psi^{-1},W_{m+a}}(f)\big)(h)
\coloneqq \sum_{y \in \left(Y_a^*+Z_m^*\right)(F)} \left(\nu_{\psi^{-1},W_{m+a}}(h)f\right)(y),
\] where $h=((u,n),g) \in \left( (\U_{n+a,r} \rtimes \mathcal{N}_{r}) \rtimes \Mp_{2m+2a}\right)(\A)= (N_{n+a,r} \rtimes \Mp_{2m+2a})(\A)=H_{n+a,r}(\A)$. Note that this is slowly increasing function. When $f \in \mc{S}$ is a pure tensor $f_1 \otimes f_2\in \mc{S}(Y_a^*(\A)) \otimes \mc{S}(Z_m^*(\A))$, then \[\big(\Theta_{\psi^{-1},W_{m+a}}(f)\big)(h)
\coloneqq\sum_{y \in Y_a^*(F), z \in Z_m^* (F)} \left(\Omega_{\psi^{-1},Y_a+Y_a^*}(h)f_1\right)(y)\cdot \left(\nu_{\psi^{-1},W_{m}}(h)f_2\right)(z).
\]
For each $f \in \mc{S}$, $\Theta_{\psi^{-1},W_{m+a}}( f)$ is an automorphic form and the map  $f \to \Theta_{\psi^{-1},W_{m+a}}( f)$ gives an automorphic realization of  $\nu_{\psi^{-1},W_{m+a}}$. Since we have fixed $\psi$, we suppress $\psi$ from the notation and write $\nu_{\psi^{-1},W}$ and $\Theta_{\psi^{-1},W}(f)$ by $\nu_{W}$ and $\Theta_{W}(f)$, respectively. We also simply write $\Theta_{\P_{m+a,a}}( f)$ for the constant term of $\Theta_{W_{m+a}}( f)$ along $\P_{m+a,a}$.
\par
The following is an analogue of \cite[Lemma 9.1]{H} whose proof is identically same.
\begin{lem}[cf. {\cite[Lemma 6.2]{Y}}] \label{l0} For $h \in H_{n+a,r}(\A)$, $f \in \mathcal{S}$,
\[
\left(\Theta_{\P_{m+a,a}}(f)\right)(h)
=\sum_{z \in Z_m^*(F)} \left(\nu_{W_{m+a}}(h)f \right)(0,z).\]

\end{lem}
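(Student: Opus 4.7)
The idea is to unfold the constant term as an iterated integral over $U_{m+a,a}$ and exploit the fact that this unipotent is a two-step nilpotent group whose centre detects nonzero values of $y\in Y_a^*$ through the Weil representation. Starting from
\[
\Theta_{P_{m+a,a}}(h,f)=\int_{[U_{m+a,a}]}\sum_{y\in Y_a^*(F),\,z\in Z_m^*(F)} \bigl(\nu_{W_{m+a}}(uh)f\bigr)(y,z)\,du,
\]
one interchanges the finite-volume integral with the absolutely convergent sum.

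The parabolic $P_{m+a,a}$ stabilises $Y_a$ and has Levi $GL(Y_a)\times G_m$; the unipotent $U_{m+a,a}$ sits in a central extension
\[
1\longrightarrow Z_{m+a,a}\longrightarrow U_{m+a,a}\longrightarrow \Hom(W_m,Y_a)\longrightarrow 1,
\]
where $Z_{m+a,a}$ is identified with the space $\mathrm{Sym}^2(Y_a)$ of symmetric maps $S\co Y_a^*\to Y_a$. A direct check shows that every $u\in U_{m+a,a}$ preserves the Lagrangian $Y_a+Z_m$ of $W_{m+a}$, so $U_{m+a,a}$ sits inside the Siegel parabolic associated to this Lagrangian, and the action on the Schrödinger model $\mathcal{S}(Y_a^*+Z_m^*)(\A)$ may be read off from the formulas of Section 3 together with the standard Siegel-parabolic action. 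Writing $u=g_T\,n(S')$, where $g_T\in GL(Y_a+Z_m)$ is the Levi projection determined by $T\in \Hom(W_m,Y_a)$ and $n(S')$ is the residual Siegel unipotent, one obtains: a central element $u_2\in Z_{m+a,a}$ acts purely by the character $\phi(y,z)\mapsto \psi^{-1}\bigl(\tfrac12\langle Sy,y\rangle\bigr)\phi(y,z)$; and at $(y,z)=(0,z)$ the outer pieces $g_T$ and $n(S')$ both act trivially, since $g_T^{-*}$ fixes $(0,z)$ and $\langle S'_T(0,z),(0,z)\rangle=\langle Tz,z\rangle=0$ by $Y_a\perp Z_m^*$.

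With these action formulas in hand, Fubini yields
\[
\int_{[U_{m+a,a}]}\bigl(\nu_{W_{m+a}}(uh)f\bigr)(y,z)\,du = \Bigl(\int_{[Z_{m+a,a}]}\psi^{-1}\bigl(\tfrac12\langle Sy,y\rangle\bigr)\,dS\Bigr)\cdot\int_{[U_{m+a,a}/Z_{m+a,a}]}\bigl(\nu_{W_{m+a}}(u_1h)f\bigr)(y,z)\,du_1.
\]
The central factor is the integral of an additive character of $[Z_{m+a,a}]$; in coordinates $\langle Sy,y\rangle=\sum_{i,j}s_{ij}y_iy_j$, so the character is trivial precisely when $y=0$. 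Hence only the $y=0$ terms of the outer sum survive, and for those the outer integral equals $(\nu_{W_{m+a}}(h)f)(0,z)$ by the triviality established above (together with our normalisation $\mathrm{vol}([U])=1$). Summing over $z\in Z_m^*(F)$ gives the claimed identity.

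The bulk of the work -- and the only real obstacle -- is verifying the explicit Weil representation formulas for $U_{m+a,a}$: one must keep track of the symplectic adjoints relating $T$, $S$ and $S'$ under the decomposition $u=g_T n(S')$, and of the signs and $\gamma$-factors coming from the metaplectic cover (which are trivial on unipotents, but must be checked). Once these are in place the argument is a short Fourier-theoretic computation, identical in structure to \cite[Lemma 9.1]{H}.
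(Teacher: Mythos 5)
Your argument is correct: unfolding the constant term over $[U_{m+a,a}]$, using that $U_{m+a,a}$ lies in the Siegel parabolic of the Lagrangian $Y_a+Z_m$ so its centre $\mathrm{Sym}^2(Y_a)$ acts by the character $\psi^{-1}(\tfrac12\langle Sy,y\rangle)$ (killing all $y\neq 0$ by orthogonality) while every $u$ acts trivially at the points $(0,z)$, is exactly the standard computation behind \cite[Lemma 9.1]{H} and \cite[Lemma 6.2]{Y}, to which the paper defers. So your proof is essentially the same as the paper's (uncited-in-detail) argument, with the key Weil-representation facts you invoke verified correctly.
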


\begin{rem}\label{eval}Let $e$ be the identity element of $H_{n+a,r}(\A) = N_{n+a, r}(\A) \rtimes \Sp_{2m+2a}(\A)$. If $f \in \mc{S}$ is a pure tensor $f_1 \otimes f_2\in \mc{S}(Y_a^*)(\A) \otimes \mc{S}(Z_m^*)(\A)$, then
\[\left(\Theta_{\P_{m+a,a}}(f_1 \otimes f_2)\right)(e)
= f_1(0) \cdot \left( \sum_{z \in Z_m^* (F)} f_2(z) \right).
\]
Since every Schwartz function in $ \mc{S}(Z_m^*)(\A)$ can be obtained as evaluation at $0\in Y_a^*(\A)$ of some Schwartz function in $\mathcal{S}(Y_a^*+Z_m^*)(\A)$, we can regard theta functions in $\nu_{W_m}$ as the evaluation at $e$ of
the constant term of theta functions in $\nu_{W_{m+a}}$ along $\P_{m+a,a}$. This observation will be used in the proof of Lemma \ref{l3}.
\end{rem}

\begin{rem}\label{ind}From Lemma \ref{l0} and (\ref{w3})  we have
\[
\left(\Theta_{\mathcal{P}_{m+a,a}}(f)\right)(u,(h_0,\zeta) g)
= (|\cdot|^{\frac{1}{2}})_{\psi^{-1}}(\det(h_0),\zeta)
\cdot
\left(\Theta_{\mathcal{P}_{m+a,a}}(f)\right)(u,g).
\]

for $u \in N_{n+a,r}, (h_0,\zeta) \in \wt{\GL}(Y_a)$ and $g \in \Mp_{2m+2a}$.
Thus from Lemma \ref{l0} and $(\ref{w1}), (\ref{w2}), (\ref{w3})$, we see that the constant terms of theta functions $\{\Theta_{\P_{m+a,a}}(f)\}_{f\in  \mathcal{S}}$
belong to the induced representation
\[
\mathrm{ind}
_{\wt{\GL}_a(\A) \times_{\{\pm1\}} (N_{n,r}(\A) \rtimes \Mp_{2m}(\A))}
^{N_{n+a,r}(\A) \rtimes \Mp_{2m+2a}(\A)}
(|\cdot|^{\frac{1}{2}})_{\psi^{-1}} \boxtimes \nu_{W_m}.
\]
\end{rem}
From now on, we consider the Fourier-Jacobi periods involving a (residual) Eisenstein series. Since the (residual) Eisenstein series is not a cusp form, $\mc{FJ}_{\psi}$ here denotes the regularized Fourier-Jacobi periods defined in \cite[Definition 4.5]{H}. (Note that the regularized Fourier-Jacobi periods naturally extend the definition of the original Fourier-Jacobi period, so it justifies using the same notation. See \cite[Remark 4.6]{H}). Since we have fixed $\psi$, we omit it from the notation and simply write $\mc{FJ}$ in place of $\mc{FJ}_{\psi}$ for the remainder of this section.

Before setting up the local machinery for Lemma~\ref{fj-bessel-mixed-model} and Proposition~\ref{theta-fj-aux}, we explain what it is for.

\begin{rem}
\label{gapH}
Proposition~\ref{l1} below is the technical heart of the proof of
Theorem~\ref{e}, and it is the analogue, for symplectic--metaplectic
pairs, of \cite[Lemma~9.4]{H}.  We cannot, however, follow the approach
taken there.  The computation of the Jacquet module of the relevant
Fourier--Jacobi coefficient carried out in \cite{H} contains a gap, and
the case \(a=1\), which turns out to require a separate treatment, was not addressed.
We therefore give a different and complete proof here.  It is general
enough to apply verbatim to the skew-Hermitian pairs of \cite{H}, and it
thereby repairs \cite[Lemma~9.4]{H} as well.

The proof splits according to the size of \(a\).  When \(a\ge2\), the
required vanishing is obtained from a Jacquet module computation, which
is the content of Propositions~\ref{key} and~\ref{key3}.  When \(a=1\)
--- a case that does occur, namely for the pairs
\(\Mp_{2n}\times\Sp_{2m}\) --- the Jacquet module retains too little
information for that argument, and we proceed instead through the local
theta correspondence, by way of Proposition~\ref{theta-fj-aux} below.
\end{rem}

We now introduce the notation needed for
Lemma~\ref{fj-bessel-mixed-model} and Proposition~\ref{theta-fj-aux}. Throughout this passage and in those two
statements, \(N\) and \(M\) denote \emph{integers} with \(N>M>0\), and
not \(A\)-parameters. Let \(F\) be a non-archimedean local field of
characteristic zero and fix a polarization
\[
W_N=X_r\oplus W_M\oplus X_r^*,\qquad r=N-M>0.
\]
Let \(P(X_r)=M(X_r)N(X_r)\) be the maximal parabolic subgroup of
\(\Sp(W_N)\) stabilizing \(X_r\), where
\(M(X_r)\simeq\GL(X_r)\times\Sp(W_M)\).  Fix a complete flag in \(X_r\),
let \(U_r\subset\GL(X_r)\) be the corresponding maximal unipotent
subgroup with the generic character \(\lambda_r\), and put
\[
N_{N,r}=U_rN(X_r),\qquad
\widetilde H_{N,r}=N_{N,r}\rtimes\Mp(W_M).
\]
For the quadratic line \(L=\langle1\rangle\), set
\[
\varpi_K=\omega_{\psi,L,W_K}^{\vee}
\simeq\omega_{\psi^{-1},L,W_K}\qquad(K=N,M),
\]
where the isomorphism follows from
\cite[Propositions~9.31--9.33]{GKT}.  We use the standard
\(\psi\)-dependent identification of genuine representations of
metaplectic general linear groups with linear representations, and all
parabolic inductions are normalized.

For a genuine finite-length representation \(\Pi\) of \(\Mp(W_N)\), let
\(\mathfrak S_r(\Pi)\) denote the set of inertial classes of the
\(\GL_r\)-factors occurring in the supercuspidal supports of its
irreducible constituents.  We call an irreducible supercuspidal
representation \(\rho\) of \(\GL(X_r)\) \(\Pi\)-separated if
\[
[\rho],[\rho^\vee]\notin\mathfrak S_r(\Pi).
\]
Such a \(\rho\) always exists, since \(\mathfrak S_r(\Pi)\) is finite,
whereas \(\GL_r(F)\) has infinitely many supercuspidal inertial classes.

\begin{lem}
\label{fj-bessel-mixed-model}

Let \(\Pi\) be a genuine finite-length representation of \(\Mp(W_N)\)
contained in one Bernstein block, and let \(\sigma_0\) be a finite-length
representation of \(\Sp(W_M)\). For any \(\Pi\)-separated irreducible
supercuspidal representation \(\rho\) of \(\GL(X_r)\), regard
\(\rho\rtimes\sigma_0\) by inflation as a non-genuine representation of
\(\Mp(W_N)\). Then there is a natural isomorphism
\begin{equation}\label{general-fj-reduction}
\Hom_{\widetilde H_{N,r}}
 \left(\Pi\otimes\sigma_0\otimes\nu_{\psi^{-1},W_M},\CC\right)
\simeq
\Hom_{\Mp(W_N)}
 \left(\Pi\otimes(\rho\rtimes\sigma_0)\otimes\varpi_N,\CC\right).
\end{equation}
\end{lem}

The central element of \(\Mp(W_M)\) acts trivially on the tensor product
in the left-hand side of \eqref{general-fj-reduction}.  Hence that
Hom-space may equivalently be taken over
\(N_{N,r}\rtimes\Sp(W_M)\).

\begin{proof}
The assertion follows from the mixed-model argument in
\cite[Theorem~16.1]{Gan2}, with $\psi$ replaced by $\psi^{-1}$ and
with the genuine and non-genuine factors interchanged.  We briefly
indicate the required changes.

Under the standard $\psi$-dependent identification of genuine
representations of the metaplectic cover of $\GL(X_r)$ with linear
representations, the genuine character occurring in the open-orbit
part of the mixed model cancels with the corresponding genuine
factor in the inducing datum.  The modulus characters appearing in
\cite[Theorem~16.1]{Gan2}, where unnormalized induction is used, are
absorbed here by our normalized-induction convention.

The evaluation-at-zero quotient belongs to Bernstein components
whose $\GL_r$-support contains an unramified twist of $\rho$ or
$\rho^\vee$.  Hence its projection to the Bernstein block containing
$\Pi$ is zero by the choice of $\rho$.  The open-orbit term is treated
exactly as in loc.\ cit., using the Gelfand--Kazhdan description of the
restriction of the supercuspidal representation $\rho$ to the
mirabolic subgroup, followed by induction in stages and compact
Frobenius reciprocity.  This gives \eqref{general-fj-reduction}.

Finally, the argument is exact in $\sigma_0$ and is carried out after
projection to the Bernstein block of $\Pi$; hence it applies without
change to the finite-length representations considered here.
\end{proof}

\begin{prop}\label{theta-fj-aux}
Let $F$ be a non-archimedean local field of characteristic zero and let
$N>M\ge1$.  Let $V=V_{2M-2}$ be an even-dimensional quadratic space in
a fixed Witt tower, with discriminant character $\chi_V$, and let
$\tau$ be an irreducible representation of $\mathrm O(V)$ such that
\(
 \sigma_0\coloneqq\theta_{\psi^{-1},V,W_M}(\tau)\ne0.
\)
Put
\[
 V^+=V\oplus\mathbb H^{N-M}=V_{2N-2},\qquad
 V'=V^+\oplus L=V_{2N-1},\qquad L=\langle1\rangle.
\]
Then $\disc(V')=\disc(V)$, and the genuine character attached to $V'$
is $(\chi_V)_{\psi^{-1}}$ in our conventions.

Let $\widetilde\pi_0$ be an irreducible genuine subquotient of a
principal series
\[
 \Ind_{\widetilde B_{N-1}}^{\Mp(W_{N-1})}
 \left((\eta_1)_\psi\boxtimes\cdots\boxtimes
       (\eta_{N-1})_\psi\right),
 \qquad |e(\eta_i)|<\frac12.
\]
For $z\in\CC$, put
\[
 I_N(z)=\chi_V|\cdot|^z\rtimes\widetilde\pi_0.
\]
Set
\[
 \mathcal D=
 \left\{z\in\CC:|\cdot|^z=|\cdot|^{1/2}\right\}
 =\frac12+\frac{2\pi i}{\log q}\ZZ,
\]
where $q$ is the residue-field cardinality.  Then, for
$z\notin\mathcal D$,
\[
\Hom_{N_{N,N-M}\rtimes\Sp(W_M)}
\left(I_N(z)\otimes\sigma_0\otimes
\nu_{\psi^{-1},W_M},\CC\right)=0.
\]
\end{prop}

\begin{proof}
Put $r=N-M$.  We first show that, for every genuine finite-length
representation $\Pi$ of $\Mp(W_N)$ contained in one Bernstein block,
\begin{equation}\label{fj-implies-weil-hom}
\Hom_{\widetilde H_{N,r}}
 \left(\Pi\otimes\sigma_0\otimes
       \nu_{\psi^{-1},W_M},\CC\right)\ne0
\quad\Longrightarrow\quad
\Hom_{\Mp(W_N)}
 \left(\omega_{\psi^{-1},V',W_N},\Pi^{\vee}\right)\ne0.
\end{equation}

For a parabolic subgroup $S=M_SU_S$ of either member of the dual
pair, we write
\(
 R_S(\pi)=J_{U_S}^{\mathrm{norm}}(\pi)
\)
for the normalized Jacquet module of $\pi$ along $S$.

Choose \(\rho\) as in Lemma~\ref{fj-bessel-mixed-model}. Since only
finitely many additional inertial classes have to be avoided, we may
choose \(\rho\) to be non-self-dual and to satisfy the following
conditions:
\begin{enumerate}
\item no unramified twist of \(\chi_V\rho\) or
\((\chi_V\rho)^\vee\) occurs in the \(\GL_r\)-cuspidal support of any
Jacquet module of \(\tau\);
\item the supercuspidal support of every \(\GL_r\)-representation
occurring in a non-open stratum of the Kudla filtration of
\(R_P(\omega_{\psi^{-1},V^+,W_N})\), where
\(M_P\simeq\GL_r\times\mathrm O(V)\), is disjoint from the inertial
class of \(\chi_V\rho\).
\end{enumerate}

For $r\ge2$ the second condition is automatic.  Indeed, for
$1\le k\le r-1$ the $\GL_r$-representation in the $k$-th stratum
is induced from the proper Levi $\GL_{r-k}\times\GL_k$, while the
$k=0$ stratum is a determinant character whose supercuspidal
support lies on the Borel Levi.  Neither can belong to the
Bernstein component of the supercuspidal representation
$\chi_V\rho$.  When $r=1$, the closed stratum is a character of
$\GL_1$; it is the only non-open stratum, and condition~(2) excludes
precisely its inertial class.  By the standard rank-one irreducibility criterion for classical groups,
using the non-self-duality of $\chi_V\rho$ and the disjointness of its
inertial line from the Jacquet supports of $\tau$,
\(
 \Sigma_\rho=(\chi_V\rho)\rtimes\tau
\)
is irreducible.  Put $J_\rho=\rho\rtimes\sigma_0$; no irreducibility of
$J_\rho$ will be needed.

We record the relevant direction of the theta induction principle.
Take smooth vectors in \cite[Proposition~13.21]{GKT}, apply normalized
Frobenius reciprocity, and use \cite[Theorem~16.1]{GKT} with the roles of
the two members of the dual pair interchanged.  Exact Bernstein
projection and condition~(2) kill every non-open stratum.  The open
stratum, together with \cite[Lemma~16.7]{GKT}, gives
\begin{equation}\label{kudla-dual-identity}
 \Theta_{\psi^{-1},V^+,W_N}(\Sigma_\rho)^{\vee}
 \simeq
 \rho^{\vee}\rtimes
 \Theta_{\psi^{-1},V,W_M}(\tau)^{\vee}.
\end{equation}
Here the factor $\chi_V$ in $\Sigma_\rho$ cancels the Leray character on
the open stratum.  Indeed, since $W_N$ is split symplectic, we use the
canonical Leray splitting of $\mathrm O(V^+)$ with splitting data
$(\tbf1,\psi^{-1})$, so $\mu_{W_N}=\tbf1$ by
\cite[Proposition~12.3]{GKT}.  Moreover, in the open stratum $k=t=r$ one
has $t-k=0$, so no $|\det|^{\lambda_{t-k}}$-factor occurs; with normalized
Jacquet modules and normalized Frobenius reciprocity, no modulus character
remains.  The big theta lifts in
\eqref{kudla-dual-identity} have finite length by
\cite[Theorem~16.19]{GKT}.  Since
$\Theta_{\psi^{-1},V,W_M}(\tau)\twoheadrightarrow\sigma_0$, exactness of
normalized induction and smooth duality give
\begin{equation}\label{kudla-induced-pair}
 \Theta_{\psi^{-1},V^+,W_N}(\Sigma_\rho)
 \twoheadrightarrow J_\rho.
\end{equation}
Consequently the defining maximal $\Sigma_\rho$-isotypic quotient of the
Weil representation yields
\[
 \omega_{\psi^{-1},V^+,W_N}\twoheadrightarrow
 \Sigma_\rho\boxtimes J_\rho.
\]
If the first Hom-space in \eqref{fj-implies-weil-hom} is nonzero,
Lemma~\ref{fj-bessel-mixed-model} gives a nonzero map
$J_\rho\otimes\varpi_N\to\Pi^\vee$.  By block compatibility of the Weil
representation and tower compatibility of the Leray splittings
\cite[Proposition~9.19 and Lemma~12.7]{GKT},
\[
 \omega_{\psi^{-1},V',W_N}
 \simeq \omega_{\psi^{-1},V^+,W_N}\otimes\varpi_N.
\]
Composing these maps and applying a linear functional on $\Sigma_\rho$
which does not annihilate the image proves
\eqref{fj-implies-weil-hom}.

We next prove the vanishing consequence of the metaplectic Kudla
filtration needed below.  Let $W$ have dimension $2n$, let $V_o$ be an
odd-dimensional quadratic space of dimension $m\le2n-1$ in a fixed Witt
tower, and put
\(
 s_0=\frac{m-2n}{2}.
\)
Let
\(
 \Sigma=\Ind_{\widetilde B_n}^{\Mp(W)}
 (\xi_1\boxtimes\cdots\boxtimes\xi_n)
\)
be a genuine principal series.  If
\begin{equation}\label{critical-character-avoidance}
 \xi_i\ne\chi_{V_o,\psi^{-1}}|\cdot|^{s_0}
 \qquad(1\le i\le n),
\end{equation}
then
\begin{equation}\label{ordered-kudla-vanishing}
 \Hom_{\Mp(W)}
 \left(\omega_{\psi^{-1},V_o,W},\Sigma\right)=0.
\end{equation}

Indeed, write
$\Sigma=\xi_1\rtimes\Sigma'$, where
\[
 \Sigma'=\Ind_{\widetilde B_{n-1}}^{\Mp(W')}
 (\xi_2\boxtimes\cdots\boxtimes\xi_n),
 \qquad \dim W'=2n-2.
\]
Normalized Frobenius reciprocity identifies the Hom-space in
\eqref{ordered-kudla-vanishing} with
\[
 \Hom_{\widetilde{\GL}_1\times_{\mu_2}\Mp(W')}
 \left(R_{Q_1}(\omega_{\psi^{-1},V_o,W}),
       \xi_1\boxtimes\Sigma'\right).
\]
By \cite[Theorem~16.8]{GKT} with $t=1$, the normalized Jacquet
module $R_{Q_1}(\omega_{\psi^{-1},V_o,W})$ has a two-step Kudla
filtration.  Denote by $R^1$ its submodule corresponding to the
$k=1$ open stratum.  Then there is an exact sequence
\begin{equation}\label{kudla-line-exact}
0\longrightarrow R^1\longrightarrow
R_{Q_1}(\omega_{\psi^{-1},V_o,W})\longrightarrow
\chi_{V_o,\psi^{-1}}|\cdot|^{s_0}
 \boxtimes\omega_{\psi^{-1},V_o,W'}\longrightarrow0.
\end{equation}
Here the parameter denoted by $n$ in \cite[Theorem~16.8]{GKT} is
$\dim W=2n$, so its exponent is
$\lambda_1=(m-(2n+1)+1)/2=s_0$.  The last quotient in
\eqref{kudla-line-exact} contributes only when
$\xi_1=\chi_{V_o,\psi^{-1}}|\cdot|^{s_0}$, which is excluded by
\eqref{critical-character-avoidance}.  Hence every nonzero map restricts
nontrivially to $R^1$.

If $V_o$ is anisotropic, then $R^1=0$.  Otherwise the $k=1$ stratum in
\cite[Theorem~16.8]{GKT} shows that $\Mp(W')$ acts on $R^1$ only through
$\omega_{\psi^{-1},V_{m-2},W'}$.  Consequently,
\[
 \Hom_{\widetilde{\GL}_1\times_{\mu_2}\Mp(W')}
 \left(R^1,\xi_1\boxtimes\Sigma'\right)\ne0
 \quad\Longrightarrow\quad
 \Hom_{\Mp(W')}
 \left(\omega_{\psi^{-1},V_{m-2},W'},\Sigma'\right)\ne0.
\]
Since $(m-2)-2(n-1)=m-2n$ and the discriminant character is unchanged
along the Witt tower, the same argument removes
$\xi_2,\xi_3,\ldots$ successively with the same critical character.
Let $V_{o,\an}$ be the anisotropic kernel and put
\(
 b=\frac{m-\dim V_{o,\an}}{2}.
\)
After $b$ steps the quadratic space is anisotropic.  Moreover,
$m\le2n-1$ and $\dim V_{o,\an}\ge1$ imply
\(
 b\le\frac{(2n-1)-1}{2}=n-1.
\)
Thus the symplectic group has not been exhausted: at the last step
$\dim W^{(b)}=2(n-b)\ge2$, so $\xi_{b+1}$ is still present, $R^1=0$, and
the remaining top quotient is excluded by
\eqref{critical-character-avoidance}.  This proves
\eqref{ordered-kudla-vanishing}.  The bound $m\le2n-1$ is used precisely
here and cannot be omitted in general; compare the rank-one counterexample in
\cite[Lemma~19.9]{GKT} when $(n,m)=(1,3)$.

Apply this with $V_o=V'$ and $W=W_N$.  Here $m=2N-1=2n-1$ with
$n=N$, so the bound $m\le2n-1$ holds with equality and
$b=N-1=n-1$.  By the subrepresentation theorem,
$\widetilde\pi_0^\vee$ embeds into a genuine principal series of
$\Mp(W_{N-1})$ with the same supercuspidal support, up to inversion and
Weyl ordering.  Hence its inducing characters
$\xi_2,\ldots,\xi_N$ still have real exponents in
$(-\frac12,\frac12)$.  Induction in stages gives
\[
 I_N(z)^\vee\hookrightarrow
 \Sigma_N(z)=\Ind_{\widetilde B_N}^{\Mp(W_N)}
 \left((\chi_V|\cdot|^{-z})_{\psi^{-1}}
       \boxtimes\xi_2\boxtimes\cdots\boxtimes\xi_N\right).
\]
Here $\dim V'=2N-1$, so $s_0=-\frac12$, and
$\chi_{V',\psi^{-1}}=(\chi_V)_{\psi^{-1}}$.  The characters $\xi_i$ for $i\ge2$ cannot be critical, since
$|e(\xi_i)|<\frac12=|s_0|$, while the first character is
critical exactly when $z\in\mathcal D$.  Hence
\eqref{ordered-kudla-vanishing} applies to $\Sigma_N(z)$ for
$z\notin\mathcal D$.  Since
$\Hom_{\Mp(W_N)}(\omega_{\psi^{-1},V',W_N},-)$ is left exact, the above
embedding yields
\[
 \Hom_{\Mp(W_N)}
 \left(\omega_{\psi^{-1},V',W_N},I_N(z)^\vee\right)=0
 \qquad(z\notin\mathcal D).
\]
Finally, $I_N(z)$ has finite length and lies in one Bernstein block, so
\eqref{fj-implies-weil-hom} applies.  The preceding vanishing proves the
proposition.
\end{proof}

\begin{prop}\label{l1}
Let \(n,m,a\) be positive integers with \(r=n-m>0\), and let
$\sigma$ be an irreducible cuspidal automorphic representation of
$\GL_a(\A)$.  Let $\pi_1\boxtimes\pi_2$ be an irreducible automorphic
representation of $\Mp_{2n}(\A)\times\Mp_{2m}(\A)$ in which exactly one
factor is genuine; write $\pi'$ for the genuine factor and $\pi$ for the
non-genuine one.  Let $M_{\pi}$ and $M_{\pi'}$ be discrete tempered
global $A$-parameters for the two relevant groups, and assume that
\[
 \pi\in\Pi_{\phi_{M_{\pi}}}^{\mathrm{aut}},
 \qquad
 \pi'\in\Pi_{\phi_{M_{\pi'}}}^{\mathrm{aut}},
\]
so that $\pi$ and $\pi'$ are both cuspidal by
Lemma~\ref{tempered-parameter-cuspidal}.

Write
\[
 M_{\pi}=\sigma_1\boxplus\cdots\boxplus\sigma_s,
 \qquad
 M_{\pi'}=\sigma_1'\boxplus\cdots\boxplus\sigma_t',
\]
where the $\sigma_i$ are of orthogonal type and the $\sigma_j'$ are of
symplectic type.  If $\pi_1=\pi'$, assume that
$\sigma\simeq\sigma_i$ for some $i$; if $\pi_1=\pi$, assume that
$\sigma\simeq\sigma_j'$ for some $j$.  Then:
\begin{enumerate}
\item If $\pi_1=\pi'$ and $\pi_2=\pi$, then, for all
$\vi\in\mathcal E^1(\sigma,\pi)$,
$\phi'\in\As_{\P_{n+a,a}}^{\sigma_\psi\boxtimes\pi'}
(\Mp_{2n+2a})$, and $f\in\nu_{W_{m+a}}$,
\[
 \mathcal{FJ}(E(\phi',z),\vi,f)=0.
\]
\item If $\pi_1=\pi$ and $\pi_2=\pi'$, then, for all
$\vi'\in\mathcal E^1(\sigma,\pi')$,
$\phi\in\As_{\P_{n+a,a}}^{\sigma\boxtimes\pi}
(\Sp_{2n+2a})$, and $f\in\nu_{W_{m+a}}$,
\[
 \mathcal{FJ}(E(\phi,z),\vi',f)=0.
\]
\end{enumerate}
\end{prop}

\begin{proof}
By \cite[Proposition~6.3(iii)]{H}, the two regularized periods define,
respectively, equivariant functionals in
\begin{align}
\label{ob1}
&\Hom_{N_{n+a,r}(\A_{\mathrm{fin}})\rtimes
\Sp_{2m+2a}(\A_{\mathrm{fin}})}
\left(
(\sigma|\cdot|^z\rtimes\pi')
\otimes\nu_{W_{m+a}}\otimes\mathcal E^1(\sigma,\pi),\CC
\right),\\
&\label{ob2}
\Hom_{N_{n+a,r}(\A_{\mathrm{fin}})\rtimes
\Sp_{2m+2a}(\A_{\mathrm{fin}})}
\left(
(\sigma|\cdot|^z\rtimes\pi)
\otimes\nu_{W_{m+a}}\otimes\mathcal E^1(\sigma,\pi'),\CC
\right).
\end{align}

We reduce both cases to one unramified finite place.  Let \(S\) contain
the dyadic places and all places where the relevant data are ramified.
If \(a=1\), choose any \(v\notin S\).  If \(a\ge2\), choose an unramified place \(v\) such that \(\operatorname{supp}(\sigma_v)\) (i.e. supercuspidal support of
\(\sigma_v\)) contains a character \(\mu\), with
\(-\frac12<e(\mu)\le0\), which is either nonquadratic or occurs with
multiplicity at least two.  Such a place exists, since otherwise
\(a=2\) and
\(\operatorname{supp}(\sigma_v)=\{\mathbf1,\varepsilon_v\}\) for
almost all \(v\), so that
\(L^S(s,\sigma)=\zeta_F^S(2s)\), contradicting the entirety of
\(L(s,\sigma)\).

At this place, the local component of the residual representation is the
unique unramified Langlands quotient $J_v$ of
\[
 \sigma_v|\det|\rtimes\pi_v
\]
in the first case, and of its metaplectic analogue in the second.  This
rests on the irreducibility of the residual representation, which we
recall from \cite[proof of Lemma~6.1]{Y}.

The cuspidal support of the residues generating $\mathcal E^1(\sigma,\pi)$
consists only of $\sigma|\cdot|^{-1}\boxtimes\pi$ on $\P_{m+a,a}$, so these
residues are square integrable by \cite[Lemma~I.4.11]{Mo}.  Hence
$\mathcal E^1(\sigma,\pi)$ is a unitary, and therefore semisimple, quotient
of $I(1,\sigma\boxtimes\pi)$.  Now put the inducing data in standard order
at each place.  The exponents contributed by $\sigma_v|\det|$ are of the
form $1+x_{i,v}$ with $x_{i,v}\in(-\frac12,\frac12)$, hence exceed
$\frac12$, whereas the exponents in the canonical Langlands datum of the
other factor lie in $[0,\frac12)$, both bounds being furnished by
Lemma~\ref{automatic-almost-tempered}.  Every block coming from
$\sigma_v|\det|$ therefore precedes all the remaining blocks, so
$I_v(1)$ is a quotient of a standard module.  Consequently $I_v(1)$ has a
unique irreducible quotient $J_v$, the space of maps onto it is
one-dimensional, and the Langlands quotient is the unique semisimple
quotient of $I_v(1)$.  We conclude that
\[
 \mathcal E^1(\sigma,\pi)\simeq\bigotimes_v{}'J_v ,
\]
which is irreducible; in particular its component at our unramified place
$v$ is $J_v$.

If either global functional in \eqref{ob1} or \eqref{ob2} were nonzero, choosing a
factorizable residue vector and fixing all components away from such a
place $v$ would give a nonzero local Hom-space with residual factor
$J_v$.  We henceforth work locally at $v$, suppress $v$ from the
notation, and write $J$ for this spherical quotient.

\smallskip
\noindent\emph{Case (i): $\pi_1=\pi'$ and $\pi_2=\pi$.}
By Lemma~\ref{Jac}, it is enough to prove that
\begin{equation}\label{ob3}
\Hom_{N_{n+a,r}\rtimes\Sp_{2m+2a}}
\left(
(\sigma|\cdot|^z\rtimes\pi_1)
\otimes\nu_{W_{m+a}}\otimes J,\CC
\right)=0
\end{equation}
outside a discrete set of $z$.  Lemma~\ref{Jac} identifies
\eqref{ob3} with the corresponding $\Sp_{2m+2a}$-Hom-space involving
\[
 J_{\psi_{r-1}}
 \left(
 (\sigma|\cdot|^z\rtimes\pi_1)
 \otimes\Omega_{W_{m+a}}
 \right).
\]
After reordering, $\pi_1$ is the unique quotient of
\[
 \Ind_{\wt B_n}^{\Mp_{2n}}
 ((\chi_1')_\psi\boxtimes\cdots\boxtimes(\chi_n')_\psi),
 \qquad 0\le e(\chi_i')<\frac12,
\]
and $\pi_2$ is an unramified subquotient of a linear principal series
with the same exponent bound.

Assume first that $a\ge2$.  By Remark~\ref{r1}, $\sigma$ is self-dual.
Since $J$ is the Langlands quotient of
$\sigma_v|\det|\rtimes\pi_{2,v}$, its unramified $L$-parameter is
obtained from that of $\pi_2$ by adjoining $\sigma|\cdot|$ and
$\sigma^\vee|\cdot|^{-1}$.  Now $\pi_2$ has tempered $A$-parameter
$M_{\pi}=\sigma_1\boxplus\cdots\boxplus\sigma_s$, so that
$\phi_{\pi_{2,v}}=M_{\pi,v}$, and by hypothesis $\sigma\simeq\sigma_i$
for some $i$.  Using $\sigma^\vee\simeq\sigma$ we therefore obtain
\begin{equation}\label{length3-block}
 \phi_{J_v}
 =
 \underbrace{
 \bigl(\sigma_v|\cdot|\oplus\sigma_v\oplus\sigma_v|\cdot|^{-1}\bigr)
 }_{\text{the associated }L\text{-parameter of }\sigma\boxtimes[2]}
 \oplus
 \bigoplus_{j\ne i}\sigma_{j,v}.
\end{equation}
We refer to the underbraced summand as a \emph{length-three block}.

By our choice of \(v\), choose an inducing character
\(\mu_\sigma\) of \(\sigma\) as above.  At the level of Borel support the
three characters
$\mu_\sigma|\cdot|,\mu_\sigma,
 \mu_\sigma|\cdot|^{-1}$ occupy three slots and form
\(\mu_\sigma(3)\).  Thus, for a suitable principal series representation
\(\mc E\),
\begin{equation}\label{length3-realization-sp}
 J\ \text{is an irreducible non-generic subquotient of}\quad
\mu_\sigma(3)\rtimes\mc E.
\end{equation}
Both assertions in \eqref{length3-realization-sp} follow from a single
surjection.  Let \(I_B\) be the Borel principal series with the same
support as \(J\); the quotient map
\(
 \mu_\sigma|\cdot|\times\mu_\sigma\times
 \mu_\sigma|\cdot|^{-1}\twoheadrightarrow\mu_\sigma(3)
\)
induces
\(
 I_B\twoheadrightarrow\mu_\sigma(3)\rtimes\mc E.
\)
Since \(I_B^K\) is one-dimensional for a hyperspecial maximal compact
subgroup \(K\) and the target is spherical, exactness of
\(K\)-invariants places the unique spherical constituent of \(I_B\),
namely \(J\), in the target.  Since \(\mu_\sigma(3)\) is a character of
\(\GL_3\) and hence has zero Whittaker module, Rodier's heredity
theorem \cite{Rod} gives the target vanishing \(\psi^{(\alpha)}\)-Whittaker module for \emph{every}
\(\alpha\), and exactness of the Whittaker functor then makes \(J\)
non-generic.  In particular \(J\) is \(\psi^{(1)}\)-non-generic, which is
the form in which Proposition~\ref{key} will use it.
Write
$\sigma\simeq\sigma_1\times\mu_\sigma$ and form the ordered
$\GL_{n+a}$ principal series
\[
 \pi_z^{\mathrm{std}}=
\sigma_1|\cdot|^z\times\mu_\sigma|\cdot|^z
\times\chi_1'\times\cdots\times\chi_n' .
\]
Here the two pieces of the $\sigma|\cdot|^z$-block are adjacent, so
normalized induction in stages gives the required surjection from the
corresponding genuine Siegel principal series onto the induced
representation in \eqref{ob3}.

For the application of Proposition~\ref{key}, let
$\pi_z^{\mathrm{aux}}$ be the Borel principal series obtained from
$\pi_z^{\mathrm{std}}$ by moving the static
$\chi_i'$-block to the outside, while leaving one inducing character of
$\sigma_1|\cdot|^z$ together with
$\mu_\sigma|\cdot|^z$ as the innermost pair in the recursive order of
that proposition.  Every interchange used here crosses a moving
character with a static one.  The corresponding normalized rank-one
intertwining operators are therefore isomorphisms outside a discrete
set of $z$, and hence
\(
\pi_z^{\mathrm{std}}\simeq\pi_z^{\mathrm{aux}}
\)
there.  After expanding $\sigma_1$, the auxiliary series satisfies the
hypotheses of Proposition~\ref{key}: it has the two required moving
characters.  Proposition~\ref{key}, with $n$ there replaced by
$n+a-1$ and $l=r-1$, together with exactness of
$J_{\psi_{r-1}}$, proves the desired vanishing.  The required range is
$(n+a-1)-(r-1)=m+a\ge3$, since \(m\ge1\) by the standing assumptions
and \(a\ge2\) in the present case.

Now let $a=1$.  Then $\sigma$ is quadratic.  Let $V_{2m}$ be the
unramified even quadratic space with discriminant character
$\chi_V=\sigma$.  Since $\sigma$ is an isobaric summand of the global
$A$-parameter $M_{\pi}$ of $\pi_2$, the unramified local theta correspondence
\cite[Chapitre~VI]{MVW} gives an irreducible unramified
$\tau\in\Irr(\mathrm O(V_{2m}))$ such that
\(
 \pi_2\simeq\theta_{\psi^{-1},V_{2m},W_m}(\tau).
\)
The going-down high-rank formula
\cite[Theorem~6.1(2)]{BHtc}, valid for arbitrary irreducible
representations, applied from relative level $-1$ to relative level
$-3$, gives
\(
 J\simeq\theta_{\psi^{-1},V_{2m},W_{m+1}}(\tau).
\)
Indeed, both sides are the unique spherical quotient of
\(
 \sigma|\cdot|\rtimes\pi_2.
\)
Taking
$N=n+1$ and $M=m+1$, Proposition~\ref{theta-fj-aux} applies to
\(
 I_N(z)=\sigma|\cdot|^z\rtimes\pi_1
\)
and proves \eqref{ob3} outside its discrete exceptional set.

\smallskip
\noindent\emph{Case (ii): \(\pi_1=\pi\) and \(\pi_2=\pi'\).}
Here \(\sigma\) is of symplectic type, so \(a=2b\ge2\).  At any
unramified place, after replacing \(\mu_i\) by \(\mu_i^{-1}\) if
necessary, write
\[
 \sigma\simeq
 \mu_1\times\cdots\times\mu_b\times
 \mu_b^{-1}\times\cdots\times\mu_1^{-1},
 \qquad -\frac12<e(\mu_i)\le0.
\]
Since \(M_{\pi'}\) contains \(\sigma\), the same unramified
calculation as in \cite[proof of Lemma~6.1, pp.~69--70]{Y} shows that
the Borel support of the spherical residual constituent \(J\) contains,
in the sense of \eqref{length3-block}, the length-three blocks
\[
 \mu_i|\cdot|,\quad\mu_i,\quad\mu_i|\cdot|^{-1}
 \qquad(1\le i\le b).
\]
Fix \(\mu_1\), and let \(\mc E\) be the genuine Borel principal series
formed from the remaining support.  With the general-linear data
understood through the fixed \(\psi\)-dependent genuine identification,
the quotient
\[
 \mu_1|\cdot|\times\mu_1\times\mu_1|\cdot|^{-1}
 \twoheadrightarrow\mu_1(3)
\]
and the same \(I_B\)-argument as in Case~(i) show that \(J\) is an
irreducible subquotient of
\[
 \mu_1(3)\rtimes\mc E.
\]
Moreover, \((\mu_1(3))_\psi\) has zero Whittaker module.  Rodier's
heredity theorem \cite{Rod} and exactness of the Whittaker functor
therefore show that
\(J\) is non-generic, and in particular
\(\psi^{(1)}\)-non-generic.

The remainder of the proof is identical to Case~(i), with the linear
and metaplectic factors interchanged and Proposition~\ref{key3} used
in place of Proposition~\ref{key}.  The remaining \(a-1\) inducing
characters of \(\sigma|\cdot|^z\) provide the required moving family.
Taking \(n\mapsto n+a-1\) and \(l=r-1\), Proposition~\ref{key3} gives
the required vanishing.  Its range condition is
\[
 (n+a-1)-(r-1)=m+a\ge3,
\]
since \(m\ge1\) and \(a\ge2\).
\end{proof}
From now on, we simply write $\P$ (resp. $\P_a,\M_a, K_a$) for $\P_{n+a,a}$ (resp. $\P_{m+a,a},\M_{m+a,a}, K_{m+a}$). Note that $N_{n+a,r}=(\Hom(Y_a,X_r) \times \Hom(Y_a^*,X_r)) \cdot N_{n,r}$ and $N_{n+a,r} \cap \P= \Hom(Y_a^*,X_r) \cdot N_{n,r}$. Denote $\Hom(Y_a,X_r) \times \Hom(Y_a^*,X_r)$ by $V$ and let $dv$ be the Haar measure of $V$ such that $dn'=dvdn$, where $dn'$ (resp. $dn$) is the Haar measures of $N_{n+a,r}$ (resp. $N_{n,r}$).

Using Proposition~\ref{l1}, we can prove the following lemma.

\begin{lem}[cf. {\cite[Lemma~{6.3}]{Y}, \cite[Lemma~{9.6}]{H}}] \label{l2}
With the same notation and hypotheses as in Proposition~\ref{l1}, the
following holds.
\begin{enumerate}
\item When $\pi_1=\pi'$ and $\pi_2=\pi$,  then for all $\vi \in \mathcal{E}^1(\sigma,\pi),
\phi' \in \As_{\P_{n+a,a}}^{\sigma_{\psi} \boxtimes \pi'}(\Mp_{2n+2a})$
and $f\in \nu_{W_{m+a}}$,
\begin{align*}
\notag
&\mathcal{FJ}(\mathcal{E}^0(\phi'), \vi,f)
=
\\
& \int_{K_a}\int_{[V]} \left( \int_{\M_{a}(F)\bs \M_{a}(\A)^1}  \left( \int_{[N_{n,r}]}\phi'(n\wt{m}vk) \cdot \Theta_{\P_{a}}(f)\big((n,\wt{m})\cdot (v,k)\big) dn  \right) \varphi_{\P_{a}}(mk) dm \right) \cdot e^{\frac{H_{\P}(vk)}{2}}   dv dk.
\end{align*}

\item When $\pi_1=\pi$ and $\pi_2=\pi'$, then for all $\vi' \in \mathcal{E}^1(\sigma,\pi'),
\phi \in \As_{\P_{n+a,a}}^{ \sigma \boxtimes\pi}(\Sp_{2n+2a})$
and $f\in \nu_{W_{m+a}}$,
\begin{align*}
\notag
&\mathcal{FJ}(\mathcal{E}^0(\phi), \vi',f)
=
\\
& \int_{K_a}\int_{[V]} \left( \int_{\M_{a}(F)\bs \M_{a}(\A)^1}  \left( \int_{[N_{n,r}]}\phi(nmvk) \cdot \Theta_{\P_{a}}(f)\big((n,\wt{m})\cdot (v,k)\big) dn  \right) \varphi_{\P_{a}}'(\wt{m}k) dm \right) \cdot e^{\frac{H_{\P}(vk)}{2}}   dv dk.
\end{align*}
\end{enumerate}
\end{lem}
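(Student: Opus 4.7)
The plan is to adapt the unfolding argument of \cite[Lemma 9.6]{H} and \cite[Lemma 6.3]{Y} to the present symplectic-metaplectic setting. The strategy proceeds in three main steps: unfold the Eisenstein series $E(\phi',z)$, isolate the contribution of the open orbit in the double coset decomposition $P(F)\bs G_{n+a}(F)/H_{n+a,r}(F)$, and take the residue at $z=\frac{1}{2}$. The two cases are completely parallel, so I focus on (i).

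First, start from the identity $\mathcal{FJ}(\mathcal{E}^0(\phi'),\vi,f)=\lim_{z\to\frac{1}{2}}(z-\frac{1}{2})\mathcal{FJ}(E(\phi',z),\vi,f)$. In the region $\Re(z)\gg 0$ where the Eisenstein series converges absolutely, unfold
\[
E(g,\phi',z)=\sum_{\gamma\in P(F)\bs G_{n+a}(F)}\phi'(\gamma g)\,d(\gamma g)^{z}
\]
and interchange the sum with the integral over $[N_{n+a,r}\rtimes G_{m+a}]$ by majorization, then group the sum according to the double coset decomposition $P(F)\bs G_{n+a}(F)/H_{n+a,r}(F)$.

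Next, carry out the orbital analysis. Using the factorization $N_{n+a,r}\simeq V\rtimes N_{n,r}$ together with the Iwasawa decomposition $G_{n+a}(\A)=P(\A)K_a$, one identifies a distinguished ``open'' orbit, represented by the identity, whose stabilizer in $H_{n+a,r}$ is precisely $(\Hom(Y_a^*,X)\rtimes N_{n,r})\rtimes G_m$. The non-open orbits contribute zero: on each such orbit the relevant inner integration forces a unipotent averaging against either a non-trivial subgroup of $N_{n,r}$ or a Levi subgroup of a proper parabolic of $G_{m+a}$, and the resulting vanishing follows from the same Jacquet-module analysis used in the proof of Lemma \ref{l1}, combined with the specific cuspidal support $\sigma|\cdot|^{-1}\boxtimes\pi$ of $\mathcal{E}^1(\sigma,\pi)$ recorded in Remark \ref{r1}. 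Only the open orbit survives.

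On the open orbit, parametrize $g\in G_{n+a}(\A)$ via the Iwasawa decomposition $g=vmk$ with $v\in V(\A)$, $m\in M_a(\A)$, $k\in K_a$, and factor the integral over $[H_{n+a,r}]$ accordingly. The factor $d(g)^{z}$ becomes $e^{zH_P(vk)}$, and after taking the residue at $z=\frac{1}{2}$ it produces the prefactor $e^{H_P(vk)/2}$ in the stated formula. The inner $[N_{n,r}]$-integral together with integration over $[V]$ leaves $\vi$ paired only with its $U_a$-invariant part, so $\vi$ is replaced by its constant term $\vi_{P_a}$ along $P_a$; by Lemma \ref{l0} and Remark \ref{ind}, the theta factor is simultaneously replaced by its constant term $\Theta_{P_{a}}(f)$. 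Assembling these pieces and passing to the $M_a(F)\bs M_a(\A)^1$ quotient (using the cuspidal-support structure to truncate the $\log\det$-direction) yields the formula in (i). Case (ii) is identical after exchanging the roles of the linear and metaplectic groups, using Proposition \ref{p4} in place of Proposition \ref{p2}.

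The main obstacle is the orbital analysis: although a similar enumeration appears in the Whittaker-type setting of \cite[Sec.~6]{gjr}, here one must track how the non-open orbits interact with the non-cuspidal form $\vi$, using the specific cuspidal-support structure of $\mathcal{E}^1(\sigma,\pi)$ to ensure vanishing. All other steps are routine extensions of the computations in \cite[Lemma 9.6]{H}, with the metaplectic cocycle contributing only the predictable $\gamma_\psi$-twist through the isomorphism $\sigma\mapsto\sigma_\psi$.
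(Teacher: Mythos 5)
Your opening step is the fatal one: you set $\mathcal{FJ}(\mathcal{E}^0(\phi'),\vi,f)=\lim_{z\to\frac{1}{2}}(z-\frac{1}{2})\,\mathcal{FJ}(E(\phi',z),\vi,f)$ and then unfold $E(\phi',z)$. But Lemma \ref{l1} of this paper asserts that $\mathcal{FJ}(E(\phi',z),\vi,f)$ vanishes identically in $z$ for $\vi\in\mathcal{E}^1(\sigma,\pi)$, so your identity would force $\mathcal{FJ}(\mathcal{E}^0(\phi'),\vi,f)=0$ for all data, contradicting the content of Lemmas \ref{l2}--\ref{l3} and Theorem \ref{rthm}, where this quantity is shown to be non-zero precisely when $\mathcal{FJ}(\pi_1,\pi_2,\nu_{W_m})\neq 0$. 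The regularized Fourier--Jacobi period does not commute with taking residues in $z$: the regularization discards exponential-polynomial terms whose residues need not vanish, and this failure of interchange is exactly the phenomenon the lemma exploits. A second, related gap: even for $\Re(z)\gg 0$ the integral of $E(\phi',z)\,\vi\,\Theta_{W_{m+a}}(f)$ over $[N_{n+a,r}\rtimes G_{m+a}]$ is not absolutely convergent, since $\vi$ is residual (hence of moderate growth, not rapidly decreasing) and the theta series is likewise not rapidly decreasing; so the majorization you invoke to unfold and regroup into $P(F)\backslash G_{n+a}(F)/H_{n+a,r}(F)$-orbits is unavailable --- this is precisely why the regularized period of \cite[Definition 4.5]{H} is needed in the first place.

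The paper's own proof is the computation of \cite[Lemma 9.6]{H} transported verbatim to the symplectic--metaplectic setting: one works directly with the definition of the regularized period via (mixed) truncation, performs the Iwasawa decomposition of the period integral along $P_a$, replaces the theta series by its constant term $\Theta_{P_a}(f)$ (Lemma \ref{l0} and Remark \ref{ind}) and $\vi$ by $\vi_{P_a}$, the discrepancies being rapidly decreasing and hence negligible, and then uses the exponents determined by the cuspidal supports of $\mathcal{E}^0(\phi')$ and $\mathcal{E}^1(\sigma,\pi)$ to see that the truncation dependence drops out, leaving exactly the stated integral with the factor $e^{H_P(vk)/2}$. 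That constant-term/truncation argument, not a double-coset unfolding of $E(\phi',z)$ followed by a residue, is what you would need to reproduce.
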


\begin{proof}
A proof is identical to that of \cite[Lemma 9.6]{H}. We omit the details.
\end{proof}


\begin{lem}[{\cite[Lemma 6.5]{Y}, \cite[Lemma 9.7]{H}}]\label{l3}
With the same notation and hypotheses as in Proposition~\ref{l1}, the
following holds.

\begin{enumerate}
\item When $\pi_1=\pi'$ and $\pi_2=\pi$, then the following is equivalent;
\begin{enumerate}
\item The Fourier--Jacobi period functional $\mc{FJ}(\pi',\pi,\nu_{W_m})$ is nonzero
\item There exist $\vi \in \mathcal{E}^1(\sigma,\pi),
\phi \in \As_{\P_{n+a,a}}^{\sigma_{\psi} \boxtimes \pi'}(\Mp_{2n+2a})$
and $f\in \nu_{W_{m+a}}$ such that
\beq\label{rec1} \int_{K_a}\int_{[V]} \left( \int_{\M_{a}(F)\bs \M_{a}(\A)^1}  \left( \int_{[N_{n,r}]}\phi(n\wt{m}vk) \cdot \Theta_{\P_{a}}(f)\big((n,\wt{m})\cdot (v,k)\big) dn  \right) \varphi_{\P_{a}}(mk) dm \right) \cdot e^{\frac{H_{\P}(vk)}{2}}   dv dk \ne 0.
\eeq
\end{enumerate}

\item When $\pi_1=\pi$ and $\pi_2=\pi'$, then the following is equivalent;
\begin{enumerate}
\item The Fourier--Jacobi period functional $\mc{FJ}(\pi',\pi,\nu_{W_m})$ is nonzero
\item  There exist $\vi' \in \mathcal{E}^1(\sigma,\pi'),
\phi \in \As_{\P_{n+a,a}}^{ \sigma \boxtimes\pi}(\Sp_{2n+2a})$
and $f\in \nu_{W_{m+a}}$ such that
\beq \label{req2}
 \int_{K_a}\int_{[V]} \left( \int_{\M_{a}(F)\bs \M_{a}(\A)^1}  \left( \int_{[N_{n,r}]}\phi(nmvk) \cdot \Theta_{\P_{a}}(f)\big((n,\wt{m})\cdot (v,k)\big) dn  \right) \varphi_{\P_{a}}(\wt{m}k) dm \right) \cdot e^{\frac{H_{\P}(vk)}{2}}   dv dk \ne 0.
\eeq

\end{enumerate}
\end{enumerate}
\end{lem}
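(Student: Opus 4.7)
The plan is to follow the pattern of \cite[Lemma 6.5]{Y} and \cite[Lemma 9.7]{H}, treating case (1) in detail (case (2) being verbatim after interchanging the roles of $\phi$ and $\phi'$). By Lemma~\ref{l2}, the iterated integral appearing in (b)(1) is literally $\mc{FJ}(\mathcal{E}^0(\phi'),\vi,f)$, so (b) asserts the existence of sections making this residual period nonzero; the content of the lemma is thus a bridge between this residual period and the genuine cuspidal period $\mc{FJ}(\pi',\pi,\nu_{W_m})$.

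The crux of the argument is the following pointwise unfolding. For fixed $(v,k)\in V(\A)\times K_a$, the inner double integral in (b) unfolds --- using the $\sigma|\cdot|\boxtimes\pi$-structure of $\varphi_{P_a}$ coming from the Langlands quotient description of $\mathcal{E}^1(\sigma,\pi)$ (as established in the proof of Lemma~\ref{l1}), the $\sigma_\psi|\cdot|^z\boxtimes\pi'$-structure of $\phi'$, and Remarks~\ref{eval} and~\ref{ind} for the theta factor --- into a cuspidal Fourier--Jacobi period $\mc{FJ}(\vi_{v,k}^{(1)},\vi_k^{(2)},f_{v,k})$ of $\pi'\boxtimes\pi$. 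Here the triple $(\vi_{v,k}^{(1)},\vi_k^{(2)},f_{v,k})$ is built explicitly from the $P$-restriction of $\phi'$ at $(v,k)$, the $P_a$-restriction of $\vi$ at $k$, and the $P_a$-constant term evaluation of $f$ at $(v,k)$, absorbing the $\sigma$-Petersson pairing on the $GL_a$-component of $M_a$. Consequently condition (b) becomes the non-vanishing of
\[
\int_{K_a}\int_{[V]} \mc{FJ}\bigl(\vi_{v,k}^{(1)},\vi_k^{(2)},f_{v,k}\bigr)\cdot e^{H_P(vk)/2}\,dv\,dk.
\]

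The direction (b)$\Rightarrow$(a) is then immediate: non-vanishing of this weighted integral forces its cuspidal integrand to be nonzero at some $(v_0,k_0)$, which witnesses (a). For (a)$\Rightarrow$(b), starting from cuspidal data $(\vi_1^\circ,\vi_2^\circ,f^\circ)$ with $\mc{FJ}(\vi_1^\circ,\vi_2^\circ,f^\circ)\ne 0$, the plan is to lift them to global sections: $f^\circ$ to $f\in \nu_{W_{m+a}}$ via Remark~\ref{eval}; $\vi_1^\circ$ to a flat Eisenstein section $\phi'$ whose $P$-restriction at a chosen $(v_0,k_0)$ reproduces $\vi_1^\circ$; and $\vi_2^\circ$ to $\vi\in\mathcal{E}^1(\sigma,\pi)$ whose $P_a$-constant term at $k_0$ recovers $\vi_2^\circ$. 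A standard bump-function argument on $V(\A)\times K_a$ concentrated near $(v_0,k_0)$ then prevents cancellation in the outer integrals.

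The main obstacle I anticipate is the surjectivity/density of the ``evaluate-at-$(v,k)$-then-restrict-to-cuspidal-data'' map underlying the (a)$\Rightarrow$(b) construction, i.e.\ ensuring that every admissible cuspidal triple $(\vi_1^\circ,\vi_2^\circ,f^\circ)$ arises (at least approximately) from some lift $(\phi',\vi,f)$. The Schwartz factor is controlled by Remark~\ref{eval}; the automorphic factors require the Langlands quotient realization of $\mathcal{E}^1(\sigma,\pi)$ (cf.\ the proof of Lemma~\ref{l1}) combined with the density of pure-tensor flat sections in $\As_P^{\sigma_\psi\boxtimes\pi'}$. Once these density assertions are in place, absorbing bump functions on $V(\A)\times K_a$ into the sections delivers the desired non-cancellation, completing the bridge from a single nonvanishing cuspidal period to a nonvanishing residual period.
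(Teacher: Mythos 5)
Your direction (b)$\Rightarrow$(a) matches the paper: the cuspidal Fourier--Jacobi period sits inside \eqref{rec1} as a partial inner integral, so its vanishing kills the whole expression. The genuine gap is in (a)$\Rightarrow$(b). Your plan is to lift a nonvanishing cuspidal triple and then run ``a standard bump-function argument on $V(\A)\times K_a$ concentrated near $(v_0,k_0)$'' to prevent cancellation. But the integrand in \eqref{rec1} is not a freely chosen function of $(v,k)$: at each point it is evaluated from the \emph{same} global data $(\phi',\vi,f)$, so you cannot insert a cutoff in those variables; any localization has to be realized through admissible choices of the sections themselves, and you have not explained how the three coupled factors can be simultaneously controlled so that the outer $[V]\times K_a$ integral does not cancel.

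The paper's mechanism, which is absent from your proposal, is: (i) show that assumption (a) makes the inner period functional $l$ on $\Pi'\boxtimes\Pi\boxtimes\Pi''$ nonzero (Petersson pairing on the $GL_a$-part times the Fourier--Jacobi period on the $G$-part); (ii) extend $l$ continuously to the Casselman--Wallach globalization and factor it as a product of local functionals $l_v$ using the uniqueness of Fourier--Jacobi models (\cite{LS,BS}) --- this is the key step that converts the global non-cancellation problem into a purely local one; (iii) at each place choose the section $\phi'_v$ supported near the open cell via a bump function $\alpha_v$ on the opposite unipotent radical $U^-_{n+a,a,v}$, exploit that $\Hom(Y_a,X)_v\subset U^-_{n+a,a,v}$ and that, by \eqref{w4}, the $\Hom(Y_a,X)$-action does not change the evaluation of the Schwartz function at $0$, and shrink the support of $f_{1,v}$ to localize the $\Hom(Y_a^*,X)_v$-integral; (iv) conclude $I(\phi'_v)\ne0$ and pass to $K$-finite sections by continuity and density. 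Without the factorization step (ii) and the concrete local localization in (iii), the ``no cancellation'' assertion in your proposal is unsupported, so the argument as written does not close.
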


\begin{proof} Though the proof is essentially same with \cite[Lemma 9.7]{H}, for its importance, we give the proof here.
Since the case $\pi_1=\pi  \text{ and } \pi_2=\pi'$ is similar, we only prove the case $\pi_1=\pi'  \text{ and } \pi_2=\pi$.
We first prove the $(a) \to (b)$ direction.

Put
\[
\Pi'= (\sigma  |\cdot|^{-1})_{\psi} \boxtimes \pi',
\quad
\Pi=\sigma |\cdot|^{\frac{1}{2}} \boxtimes \pi,
\quad
\Pi''=|\cdot|_{\psi^{-1}}^{\frac{1}{2}} \boxtimes \nu_{W_m}.
\]
We define a functional on $\Pi' \boxtimes \Pi \boxtimes \Pi''$ by
\[
l(\eta' \boxtimes \eta \boxtimes \eta'')
=\int_{\M_{a}(F)\bs \M_{a}(\A)^1}\eta(m) \left(\int_{N_{n,r}(F) \bs N_{n,r}(\A)}\eta'(nm)\eta''(nm)dn\right)dm
\]
\par

Let $(\Pi' \boxtimes \Pi \boxtimes \Pi'')^{\infty}$ be
the canonical Casselman-Wallach globalization of $\Pi' \boxtimes \Pi \boxtimes \Pi''$
realized in the space of smooth automorphic forms
without the $K_{\M_{n+a}} \times K_{\M_{m+a}} \times K_{\M_{m+a}}$-finiteness condition,
where $K_{\M_{i+a}}=K_{i+a} \cap \M_{i+a,a}(\A)$ for $i=n,m$
(cf. \cite{Ca}, \cite[Chapter 11]{Wa}).
Since cusp forms are bounded,
$l$ can be uniquely extended to a continuous functional on $(\Pi' \boxtimes \Pi \boxtimes \Pi'')^{\infty}$
and denote it by the same notation.
Our assumption enables us to choose $\eta' \in \Pi', \eta \in \Pi$ and $\eta^{\prime \prime} \in \Pi''$
so that $l(\eta' \boxtimes \eta \boxtimes \eta'')\ne 0$.
We may assume that $\eta', \eta$ and $\eta''$ are pure tensors.
By \cite{LS,BS}, the functional $l$ is a product of local functionals
$l_v \in \mathrm{Hom}_{\M_{a,v}}
((\Pi_{v}' \boxtimes \Pi_v \boxtimes \Pi^{\prime \prime}_v)^{\infty},\mathbb{C})$,
where we set
$(\Pi_v' \boxtimes \Pi_v \boxtimes \Pi_v^{\prime \prime})^{\infty}
=\Pi_v' \boxtimes \Pi_v \boxtimes \Pi_v^{\prime \prime}$
if $v$ is finite.
Then we have $l_v(\eta_v' \boxtimes \eta_v \boxtimes \eta_v'')\ne 0$.
\par

Choose a finite set \(S\) containing the archimedean places and every
finite place at which the groups, the representations, \(\psi\), or
the chosen pure tensors are ramified. For \(v\notin S\), take
\(\eta_v'\), \(\eta_v\), and \(\eta_v''\) to be the normalized
spherical vectors and normalize \(l_v\) by
\(
l_v(\eta_v'\boxtimes\eta_v\boxtimes\eta_v'')=1.
\)

Denote by \(e\) the identity element of \(\Sp_{2m+2a}\). Choose
\(\varphi\in\mathcal E^1(\sigma,\pi)\) and a pure tensor
\(
f=f_1\otimes f_2\in\nu_{W_{m+a}}
=\Omega_{Y_a+Y_a^*}\otimes\nu_{W_m}
\)
such that
\begin{enumerate}
\item
\(
\delta_{\P_a}^{-1/2}\varphi_{\P_a}
=\boxtimes_v\varphi_v
\)
and
\(
\Theta_{\P_a}(f)=\boxtimes_v f_v
=\boxtimes_v\bigl(f_{1,v}(0)\otimes f_{2,v}\bigr);
\)
\item
\(
\varphi_v(e)=\eta_v
\)
and
\(
f_v((1,e))=\eta_v''.
\)
\end{enumerate}
Such choices are available by Remarks~\ref{eval} and~\ref{ind}; they
may be taken spherical outside \(S\).

We now make the local construction at \(v\in S\). Let
\(\U_{n+a,a}^{-}\) be the unipotent radical of the parabolic opposite
to \(\P_{n+a,a}\), and choose a smooth compactly supported function
\(\alpha_v\) on \(\U_{n+a,a,v}^{-}\), supported sufficiently near
the identity. On the open cell define a smooth section
\[
\phi_{v,0}\in
\Ind_{\wt\P_{n+a,a,v}}^{\Mp_{2n+2a,v}}(\Pi_v')^\infty
\]
by
\[
\phi_{v,0}(\wt m u u_-)
=\delta_{\P_v}(m)^{1/2}\alpha_v(u_-)
(\Pi_v')^\infty(\wt m)\eta_v',
\]
where \(m\in\M_{n+a,a,v}\), \(u\in\U_{n+a,a,v}\), and
\(u_-\in\U_{n+a,a,v}^{-}\). Thus the section takes its values in
\(\Pi_v'\), while \(\varphi_v\) takes its values in \(\Pi_v\).

Since
\(
N_{n+a,r}\simeq
\bigl(\Hom(Y_a,X)\times\Hom(Y_a^*,X)\bigr)\rtimes N_{n,r},
\)
we regard both Hom spaces as subgroups of \(N_{n+a,r}\). By
\eqref{w4}, for \(p_1\in\Hom(Y_a,X)_v\),
\[
f_v((p_1,e))
=\bigl(\Omega_{Y_a+Y_a^*}(p_1)f_{1,v}\bigr)(0)\otimes f_{2,v}
=f_v((1,e))=\eta_v''.
\]
Moreover, \(\Hom(Y_a,X)_v\) lies in
\(\U_{n+a,a,v}^{-}\). Shrinking the support of \(\alpha_v\), we
obtain
\[
\int_{\Hom(Y_a,X)_v}
l_v\bigl(\phi_{v,0}(p_1)\boxtimes\eta_v\boxtimes\eta_v''\bigr)
e^{H_{\P}(p_1)/2}\,dp_1\ne0.
\]
By continuity there is a sufficiently small neighborhood \(N_v\) of
\(0\) in \(\Hom(Y_a^*,X)_v\) such that
\[
\int_{\Hom(Y_a^*,X)_v}\int_{\Hom(Y_a,X)_v}
l_v\bigl(
\phi_{v,0}(p_1p_2)\boxtimes\eta_v
\boxtimes f_v((p_1p_2,e))
\bigr)
e^{H_{\P}(p_1p_2)/2}\chi_{N_v}(p_2)\,dp_1\,dp_2
\ne0.
\]
Choose \(f_{1,v}\) so that, under the realization
\[
f_{1,v}(p_2)
=\bigl(\Omega_{Y_a+Y_a^*}(p_2)f_{1,v}\bigr)(0),
\]
its support is contained in \(N_v\). Hence
\begin{align}
\label{ii}
&\int_{\Hom(Y_a^*,X)_v}\int_{\Hom(Y_a,X)_v}
l_v\bigl(
\phi_{v,0}(p_1p_2)\boxtimes\eta_v
\boxtimes f_v((p_1p_2,e))
\bigr)
e^{H_{\P}(p_1p_2)/2}\,dp_1\,dp_2
\ne0.
\end{align}

For a local section
\(
\phi_v\in
\Ind_{\wt\P_{n+a,a,v}}^{\Mp_{2n+2a,v}}(\Pi_v')^\infty,
\)
put
\[
\mathcal I_v(\phi_v)
=\int_{K_{m+a,v}}\int_{\Hom(Y_a^*,X)_v}
\int_{\Hom(Y_a,X)_v}
l_v\bigl(
\phi_v(p_1p_2k)\boxtimes\varphi_v(k)
\boxtimes f_v((p_1p_2k,e))
\bigr)
e^{H_{\P}(p_1p_2k)/2}\,dp_1\,dp_2\,dk.
\]
The open-cell construction above, together with \eqref{ii}, allows
the support in the \(K_{m+a,v}\)-variable to be concentrated near
the identity, and therefore gives a smooth section on which
\(\mathcal I_v\) is nonzero. Since \(\mathcal I_v\) is continuous and
the \(K_{n+a,v}\)-finite vectors are dense in the local induced
representation, we may choose a \(K_{n+a,v}\)-finite section
\(\phi_v\) with \(\mathcal I_v(\phi_v)\ne0\). This completes the proof of the \((a)\to(b)\) direction.

The proof of $(b) \to (a)$ direction is almost immediate. From what we have seen in the above, we see that the Fourier--Jacobi period integral $\mathcal{FJ}(\pi', \pi,\nu_{W_m})$ is a partial inner period integral in (\ref{rec1}). Therefore, if $\mathcal{FJ}(\pi',\pi,\nu_{W_m})=0$, the integral (\ref{rec1}) is always zero.
\end{proof}

By combining Lemma \ref{l2} with Lemma \ref{l3}, we get the following reciprocal non-vanishing theorem.

\begin{thm}\label{rthm}
With the same notation and hypotheses as in Proposition~\ref{l1},
Then $\mathcal{FJ}(\pi_1,\pi_2,\nu_{W_m}) \ne 0$ is equivalent to $\mathcal{FJ}(\mathcal{E}^0( \sigma,\pi_1), \mathcal{E}^1(\sigma,\pi_2),\nu_{W_{m+a}}) \ne 0$.
\end{thm}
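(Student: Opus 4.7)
The plan is to derive Theorem \ref{rthm} directly from Lemmas \ref{l2} and \ref{l3}, which together translate the nonvanishing of the residual Fourier--Jacobi period into that of the original Fourier--Jacobi period on the smaller groups.

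First I would unwind the definition of the residual representations. Since $\mathcal{E}^0(\sigma, \pi_1)$ is by construction generated (as an automorphic realization) by the residues $\mathcal{E}^0(\phi)$ as $\phi$ ranges over $\As_{P_{n+a,a}}^{\sigma_\psi \boxtimes \pi_1}(\wt{G_{n+a}})$ in Case (i), respectively over $\As_{P_{n+a,a}}^{\sigma \boxtimes \pi_1}(G_{n+a})$ in Case (ii), and since $\mathcal{FJ}$ is linear and equivariant under the ambient group action, the nonvanishing of $\mathcal{FJ}(\mathcal{E}^0(\sigma, \pi_1), \mathcal{E}^1(\sigma, \pi_2), \nu_{W_{m+a}})$ is equivalent to the existence of some $\phi$, some $\vi \in \mathcal{E}^1(\sigma, \pi_2)$ and some $f \in \nu_{W_{m+a}}$ such that $\mathcal{FJ}(\mathcal{E}^0(\phi), \vi, f) \neq 0$. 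Lemma \ref{l2} then rewrites this quantity as the explicit iterated integral involving the constant term $\Theta_{P_a}(f)$ of the theta function along $P_a$ and an inner $[N_{n,r}]$-period pairing $\phi$ against $\vi$. Lemma \ref{l3} in turn identifies the nonvanishing of this integral, as the data vary, with the nonvanishing of the original period functional $\mathcal{FJ}(\pi_1, \pi_2, \nu_{W_m})$. Concatenating these equivalences gives the statement.

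Since the substantive technical content has already been absorbed into the preceding results -- Lemma \ref{l1} for the vanishing of $\mathcal{FJ}(E(\phi, z), \vi, f)$ when $\vi$ lies in the residual representation on the opposite side, Lemma \ref{l2} for the residue computation at $z = \tfrac{1}{2}$, and Lemma \ref{l3} for the reciprocal characterization at the level of the explicit integral -- no new substantive obstacle arises at this final stage. The argument is essentially bookkeeping; the only point demanding a little care is the justification that one may reduce from the abstract residual representation $\mathcal{E}^0(\sigma, \pi_1)$ to a generating family of residues $\mathcal{E}^0(\phi)$, which follows from density of these vectors in $\mathcal{E}^0(\sigma, \pi_1)$ together with the continuity and $G$-equivariance of $\mathcal{FJ}$.
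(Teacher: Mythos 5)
Your proposal is correct and takes essentially the same route as the paper, whose proof is precisely the concatenation of Lemma \ref{l2} (expressing $\mathcal{FJ}(\mathcal{E}^0(\phi),\vi,f)$ as the explicit iterated integral) with Lemma \ref{l3} (the equivalence of the nonvanishing of that integral, as the data vary, with $\mathcal{FJ}(\pi_1,\pi_2,\nu_{W_m})\neq 0$); your additional step reducing from the residual representation $\mathcal{E}^0(\sigma,\pi_1)$ to the generating residues $\mathcal{E}^0(\phi)$ is routine bookkeeping that the paper leaves implicit. (The $\nu_{W_{m+2a}}$ in the theorem statement is evidently a typo for $\nu_{W_{m+a}}$, which is what you correctly use.)
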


Now piecing together everything we have developed so far, we can prove our main theorem.

\begin{customproof1}
Since the case $\pi_1=\pi$ is similar, we only treat the case
$\pi_1=\pi'$; thus $M$ is the symplectic parameter carried by the genuine
factor $\pi_1=\pi'$, and $N$ is the orthogonal parameter carried by
$\pi_2=\pi$.  By Lemma~\ref{tempered-parameter-cuspidal}, $\pi_1$ and
$\pi_2$ are cuspidal.  Write
\[
N=\sigma_1 \boxplus \cdots \boxplus \sigma_t,
\]
where $\sigma_1,\dots,\sigma_t$ are distinct irreducible cuspidal
automorphic representations of general linear groups.  Since $N$ is a
discrete tempered global $A$-parameter of orthogonal type, each
$\sigma_i$ is of orthogonal type, so that the symmetric-square
$L$-function $L(z,\sigma_i,\Sym^2)$ has a pole at $z=1$.  On the other
hand, $\mathcal{E}^0(\sigma_i,\pi')$ is nonzero by Theorem \ref{rthm}.
Thus by Proposition \ref{p4}, we have $L\left(\frac{1}{2},M \times \sigma_i \right)\ne 0 $ for all $1 \le i \le t$
and we get
\[
L\left(\frac{1}{2},M \times N\right)
= \prod_{i=1}^tL\left(\frac{1}{2},M \times \sigma_i  \right)\ne 0.
\] \hfill\(\square\)
\end{customproof1}

\begin{rem}
\label{miyawaki}
We record here the application of Theorem~\ref{e} to Miyawaki lifting
announced in the introduction.  In~\cite{At0}, Atobe recast the theory of
Miyawaki lifting in representation-theoretic terms and proved a
nonvanishing criterion for the generalized Miyawaki
lift~\cite[Theorem~5.5]{At0} under the following three assumptions:

\begin{itemize}
\item (i) $\to$ (ii) direction of the tempered GGP conjecture for $\Mp_{2n+2}(\mathbb{A}) \times \Mp_{2n}(\mathbb{A})$
\item (ii) $\to$ (i) direction of the tempered GGP conjecture for $\Mp_{2n}(\mathbb{A}) \times \Mp_{2n}(\mathbb{A})$
\item the generic summand conjecture for the Fourier--Jacobi coefficients (see \cite[Hypothesis 5.3 (A)]{At0}).
\end{itemize}
The first of these is the corank-one case of Theorem~\ref{e}, which
therefore sharpens \cite[Theorem 5.5]{At0} by removing it.
\end{rem}

\section{\textbf{Non-tempered GGP conjecture}}\label{sec:nontemp}
In this section, we
formulate the non-tempered GGP conjecture and prove one direction of it
for the members of the associated global \(L\)-packets of certain
non-tempered \(A\)-parameters. We first fix the relevant
\(L\)-functions, recalling from Subsection~\ref{dgp} the packet notation
that we shall use, and then treat the residual and the cuspidal
realizations of these associated global \(L\)-packets in turn.

\subsection{Non-tempered GGP conjecture} Let $\pi_1$ and $\pi_2$ be cuspidal automorphic representations of $\GL_n(\A)$ and $\GL_m(\A)$, respectively. Let $d_1,d_2$ be non-negative integers.

Define \begin{align*}
L(z, \pi_1 \boxtimes [d_1]) &\coloneqq \prod_{i=0}^{d_1} L\left(z + \frac{d_1}{2} - i, \pi_1\right) \\
L(z, (\pi_1 \times \pi_2) \boxtimes [d_1]) &\coloneqq \prod_{i=0}^{d_1} L\left(z + \frac{d_1}{2} - i, \pi_1 \times \pi_2\right) \\
L(z, (\pi_1 \boxtimes [d_1]) \times (\pi_2 \boxtimes [d_2])) &\coloneqq \prod_{k=0}^{\min(d_1, d_2)} L\bigl(z, (\pi_1 \times \pi_2) \boxtimes [d_1 + d_2 - 2k]\bigr)
\end{align*}

Here, $L(z,\pi_1)$ and $L(z,\pi_1 \times \pi_2)$ denote the completed standard automorphic and Rankin--Selberg $L$-functions, respectively.

For two discrete global $A$-parameters $M$ and $N$, decompose $M$ and $N$ as:
\( M = \sum_{\alpha} V_{\alpha}, \quad N = \sum_{\beta} W_{\beta} \)
where $V_{\alpha}$ and $W_{\beta}$'s are irreducible $A$-parameters.

Then we define:
\[ L(z,M) \coloneqq \prod_{\alpha} L(z,V_{\alpha}), \quad L(z,M \times N) \coloneqq \prod_{\alpha, \beta} L(z,V_{\alpha} \times W_{\beta}). \]

These $L$-functions of parameters agree with the analytic $L$-functions of
the automorphic representations that the parameters govern.  We shall use
the following form of the comparison, which is obtained by combining the
Lapid--Rallis theory of doubling $\gamma$-factors with the
multiplicativity of the doubling $L$-factors.

\begin{prop}[{\cite[Proposition~A.1]{HK}}]
\label{Leq}
Let $G_k$ be one of the groups $\SO_{2k}$, $\Sp_{2k}$, $\SO_{2k+1}$, or
$\Mp_{2k}$; in the metaplectic case, the packets and the local factors
are the ones attached to the fixed additive character $\psi$.  Let
\[
M=\bigoplus_{i=1}^{r}M_i\boxtimes[d_i],
\qquad d_i\ge0,
\]
be a discrete global $A$-parameter of $G_k$, and assume that $M_i$ is
tempered for every $i$ with $d_i\ge1$.  Then, for every $\pi\in\Pi_M$
and every automorphic character $\chi$ of $\GL_1(\A)$,
\[
L^{\mathrm{std}}(z,\pi\times\chi)=L(z,M\times\chi),
\]
where the left-hand side is the completed standard $L$-function of $\pi$
defined by the doubling method.
\end{prop}

\begin{defn}\label{drel}
Given two discrete global $A$-parameters $M$ and $N$, we can write $M$ and $N$ as follows:
\begin{align*}
M &= \sum_{i=1}^t M_i \boxtimes [n_i] \\
N &= \sum_{i=1}^t M_i \boxtimes [m_i],
\end{align*}
where $M_i$ is an irreducible unitary cuspidal automorphic representation of $\GL_{r_i}$ and $n_{i}, m_{i}$ are integers greater than or equal to $-1$. (Note that $[-1] = 0$.)

We say that $M$ and $N$ are relevant if there exists a permutation $p$ of $\{1,2,\ldots, t\}$ such that
\( M = \sum_{i=1}^t M_i \boxtimes [n_{p(i)}] \)
and $|n_{p(i)} - m_i| = 1$ for all $1 \leq i \leq t$. In this case, one of $M$ and $N$ is symplectic, and the other one is orthogonal.
\end{defn}

Suppose that $M$ and $N$ are relevant discrete $A$-parameters.

When $M$ is symplectic and $N$ is orthogonal, define $L(z,M,N)$ as
\[ L(z,M,N) \coloneqq \frac{L(z+\frac{1}{2},M \times N)}{L(z+1,\Sym^2(M)) \cdot L(z+1,\wedge^2(N))}.\]
When \(M\) is orthogonal and \(N\) is symplectic, we use the
opposite-orientation convention
\[
L(z,M,N)
\coloneqq
\frac{L(z+\frac12,M\times N)}
{L(z+1,\wedge^2(M))\cdot L(z+1,\Sym^2(N))}.
\]
In both cases, it is proved that $L(z,M,N)$ is holomorphic at \(z=0\) (see \cite[Theorem~9.7]{Gan1}).

\subsubsection{Component groups and the multiplicity formula}
\label{compgrp}

The arguments of Subsections~\ref{FJR} and~\ref{gen} use the internal
structure of the packets recalled in Subsection~\ref{locpack}.  We now
describe it, following \cite[Sect.~1.4]{Ae}, \cite[Sect.~5.3]{Li} and
\cite[Appendix~B]{At0}.

Let \(\theta\) be a local or global \(A\)- or \(L\)-parameter for a
classical group \(G\), let \(S_\theta\) be the centralizer of the image
of \(\theta\) in the dual group \(\widehat G\), and put
\[
 \mathcal S_\theta
 =
 \pi_0\bigl(S_\theta/Z(\widehat G)^{\Gamma}\bigr).
\]
Write \(\theta=\bigoplus_i\theta_i\) as a sum of pairwise inequivalent
irreducible summands.  A summand \(\theta_i\) that is self-dual of the
same type as the form defining \(\widehat G\) contributes to
\(S_\theta\) the element \(a_i\) acting by \(-1\) on \(\theta_i\) and
trivially on the remaining summands, whereas a pair of mutually dual
non-self-dual summands contributes a connected general linear factor and
hence nothing to \(\mathcal S_\theta\).  Thus \(\mathcal S_\theta\) is an
elementary abelian \(2\)-group generated by the classes of the \(a_i\),
subject to the single relation imposed by \(Z(\widehat G)^{\Gamma}\) and
by the determinant condition defining \(\widehat G\); we refer to
loc.\ cit.\ for the normalizations, which we shall never need in
explicit form.  In particular a discrete global \(A\)-parameter
\(M=\bigoplus_iM_i\boxtimes[d_i]\) has one generator \(a_i\) for each
summand \(M_i\boxtimes[d_i]\), and localization at \(v\) gives a
homomorphism
\[
 \mathcal S_M\longrightarrow\mathcal S_{M_v},
 \qquad
 s\longmapsto s_v,
\]
matching the generators attached to corresponding summands.

For a local \(A\)-parameter \(Y_v\), denote by
\[
q_{Y_v}^{A/L}:\mathcal S_{Y_v}\longrightarrow\mathcal S_{\phi_{Y_v}}
\]
the natural homomorphism induced by passage from \(Y_v\) to its
associated \(L\)-parameter.

Finally, the character \(\epsilon_M^{G}\) of \(\mathcal S_M\) occurring
in the multiplicity formula \eqref{globalApacket} is Arthur's sign
character \(\epsilon_M^{\mathrm{Art}}\) when \(G\) is symplectic or
special orthogonal, and
\(\epsilon_M^{\mathrm{Mp}}=\epsilon_M^{\mathrm{Art}}\nu_M\), with
\(\nu_M\) as in \cite[Definition~5.3.1]{Li}, when \(G\) is metaplectic.

With this notation in place, the non-tempered GGP conjecture for the
Fourier--Jacobi case is Conjecture~\ref{conn} of the introduction; see
\cite[Conjecture~9.1]{Gan1} for the Bessel case.

\begin{rem}For a symplectic tempered $A$-parameter $M$, it is easy to verify that $L(1,\Sym^2(M)) \ne 0$.
Similarly, one can check that $L(1,\wedge^2(N)) \ne 0$ for an orthogonal tempered $A$-parameter $N$.
Therefore, Conjecture~\ref{conn} implies the tempered GGP conjecture.
\end{rem}

To prove one direction of this conjecture for certain non-tempered cases, we need some analytic properties of automorphic $L$-functions.
We briefly review these properties (see \cite{JSS}, \cite{Sha1}, and \cite[Theorem~1.3]{Gr}).

Let \(\sigma_1\) and \(\sigma_2\) be irreducible cuspidal automorphic representations of \(\GL_n(\mathbb{A})\) and \(\GL_m(\mathbb{A})\), respectively. We denote by \(L(z,\gamma(\sigma_1))\) the complete symmetric or exterior square \(L\)-function according to  $\gamma=\Sym^2\text{ or }\wedge^2$. We review some analytic properties of \(L(z,\sigma_1 \times \sigma_2)\) and \(L(z,\gamma(\sigma_1))\).

\begin{itemize}
\item $L(z,\sigma_1 \times \sigma_2)$ and $L(z,\gamma(\sigma_1))$ have meromorphic continuations to the whole complex plane.
\item $L(z,\sigma_1 \times \sigma_2)$ and $L(z,\gamma(\sigma_1))$ satisfy functional equations relating the values at $s$ and $1-s$.
\item $L(z,\sigma_1 \times \sigma_2)$ and $L(z,\gamma(\sigma_1))$ are holomorphic for all $s \in \mathbb{C} \setminus \{0,1\}$.
\item For $\Re(s) \geq 1$ and $\Re(s) \leq 0$, $L(z,\sigma_1 \times \sigma_2)$ and $L(\gamma(\sigma_1),s)$ are non-zero.
\item $L(z,\sigma_1 \times \sigma_2)$ has a pole at $z=0,1$ if and only if $m=n$ and $\sigma_2=\sigma_1^{\vee}$ (the contragredient dual of $\sigma_1$). In this case, the pole is a simple pole.
\item If $\sigma_1$ is not self-dual, then $L(z,\gamma(\sigma_1))$ is an entire function.
\item If $\sigma_1$ is self-dual, exactly one of $L(z,\Sym^2(\sigma_1))$ and $L(z,\wedge^2(\sigma_1))$ has a simple pole at $z=0,1$.
\end{itemize}

On the other hand,  we freely use the following elementary facts.
\begin{fac} \label{fac}
Let $M$ and $N$ be a relevant pair of discrete $A$-parameters which has a decomposition
\( M = \bigoplus_{\alpha} V_{\alpha}, \quad N = \bigoplus_{\alpha} W_{\alpha} \)
so that each \((V_{\alpha},W_{\alpha})\) is a relevant pair of irreducible $A$-parameters. Assume that $M$ is symplectic and $N$ is orthogonal. Let $\rho$ and $V$ be representations of $\GL_k(\A)$ and $\SL_2(\CC)$, respectively.
Then:
\begin{itemize}
\item $M \otimes N = \sum_{\alpha} (V_{\alpha} \otimes W_{\alpha}) + \sum_{\alpha \neq \beta} (V_{\alpha} \otimes W_{\beta})$
\item $\Sym^2(M)=\bigoplus_\alpha\Sym^2(V_\alpha)\boxplus\bigoplus_{\alpha<\beta}(V_\alpha\otimes V_\beta)$
\item $\wedge^2(N)=\bigoplus_\alpha\wedge^2(W_\alpha)\boxplus\bigoplus_{\alpha<\beta}(W_\alpha\otimes W_\beta)$
\item $\Sym^2([i]) = [2i] + [2i-4] + \cdots$
\item $\wedge^2([i]) = [2i-2] + [2i-6] + \cdots$
\item $\Sym^2(\rho \boxtimes V) = \Sym^2(\rho) \boxtimes \Sym^2(V) + \wedge^2(\rho) \boxtimes \wedge^2(V)$

\item $\wedge^2(\rho \boxtimes V) = \Sym^2(\rho) \boxtimes \wedge^2(V) + \wedge^2(\rho) \boxtimes \Sym^2(V)$
\item $[i] \otimes [j] = \bigoplus_{k=0}^{\min(i,j)}[i+j-2k]$
\end{itemize}
\end{fac}

With the conjecture and the required analytic facts in place, we now
turn to the implication from period nonvanishing to the nonvanishing
of the extended central \(L\)-value. The argument separates naturally
according to whether the associated global \(L\)-packet is realized
in the residual spectrum or, in the cuspidal spectrum.

\subsection{The Fourier--Jacobi period for associated global \(L\)-packets}
\label{FJR}

In this subsection, we prove the direction
\((1)\Rightarrow(2)\) in Conjecture~\ref{conn}~(ii) for the
automorphic members of certain associated global
\(L\)-packets attached to relevant non-tempered \(A\)-parameters.
The argument has two parts. We first show that, once the relevant
residual representation exists, every automorphic member of the
associated global \(L\)-packet is residual. We then specialize this
packet-theoretic result to a relevant pair and prove the GGP
implication under the required central nonvanishing hypothesis.

\subsubsection{Residual realization of associated global
\(L\)-packets}
\label{residual-associated-packets}

We begin with local and global realization results that apply
uniformly to the two types of non-tempered parameters occurring
below. Let \(d\in\{1,2\}\), let \(\sigma\) be an irreducible unitary
cuspidal automorphic representation of \(\GL_a(\A)\), and let \(X\)
be a discrete tempered global \(A\)-parameter of the appropriate
type. Put
\begin{equation}
\label{general-parameters-Xd}
X_d=\sigma\boxtimes[d]\boxplus X,
\qquad
X_d^{-}=\sigma\boxtimes[d-2]\boxplus X.
\end{equation}
As in Subsection~\ref{dgp}, we understand \([-1]=0\). Thus
\(X_1^{-}=X\), whereas \(X_2^{-}=\sigma\boxplus X\).

Let \(G_k\) denote either \(\Sp_{2k}\) or \(\Mp_{2k}\). We assume
that \(X_d^{-}\) is a discrete tempered global \(A\)-parameter for
\(G_k\), and that \(X_d\) is a discrete global \(A\)-parameter for
the group \(G_{k+a}\) of the same type. In particular, when \(d=2\),
the parameter \(X_2^{-}=\sigma\boxplus X\) is assumed to be
discrete.

By the definition of the associated global \(L\)-parameter recalled
above, the summand \([d]\) contributes the characters
\(|\cdot|^{d/2},|\cdot|^{d/2-1},\ldots,|\cdot|^{-d/2}\), whose
middle terms are precisely those contributed by \([d-2]\). Hence
\begin{equation}
\label{associated-parameter-Xd}
\phi_{X_d}
=
\sigma|\cdot|^{d/2}
\boxplus\phi_{X_d^{-}}
\boxplus\sigma^\vee|\cdot|^{-d/2}.
\end{equation}
Since \(X_d\) is a discrete parameter of the required type,
\(\sigma\) is self-dual. We retain \(\sigma^\vee\) in
\eqref{associated-parameter-Xd} to display the structure of the
associated \(L\)-parameter.

Lemma~\ref{automatic-almost-tempered} supplies the local bounds needed
in every global application below. We now identify the members of the
associated local \(L\)-packet in terms of their middle tempered data and
set up the standard modules that will realize them.

For \(\rho_v\in\Pi_{X_{d,v}^{-}}\), denote the corresponding member
of the associated local \(L\)-packet by
\[
\pi_{G,\psi_v}(\phi_{X_d,v},\rho_v)
=
\begin{cases}
\pi(\phi_{X_d,v},\rho_v),&G_k=\Sp_{2k},\\
\pi_{\psi_v}(\phi_{X_d,v},\rho_v),&G_k=\Mp_{2k}.
\end{cases}
\]
For a representation \(\tau_v\) of a general linear group over
\(F_v\), put
\[
(\tau_v)_{\psi_v,G}
=
\begin{cases}
\tau_v,&G_k=\Sp_{2k},\\
(\tau_v)_{\psi_v},&G_k=\Mp_{2k},
\end{cases}
\qquad
\P_{k+a,a}^{G}
=
\begin{cases}
\P_{k+a,a},&G_k=\Sp_{2k},\\
\widetilde{\P}_{k+a,a},&G_k=\Mp_{2k}.
\end{cases}
\]
Thus \(\P_{k+a,a}^{G}(F_v)\) denotes the corresponding local
parabolic subgroup, and \(\P_{k+a,a}^{G}(\A)\) denotes its global
counterpart.
For \(z\in\CC\), define
\[
I_{d,v}(z,\rho_v)
=
(\sigma_v|\det|_v^z)_{\psi_v,G}\rtimes\rho_v,
\qquad
z_d=\frac d2.
\]
When \(G_{k+a}=\Mp_{2k+2a}\), the notation \(G_{k+a}(F_v)\)
refers to the metaplectic covering group. We write
\[
N_{d,v}(z,\rho_v):
I_{d,v}(z,\rho_v)\longrightarrow I_{d,v}(-z,\rho_v)
\]
for the normalized local intertwining operator associated with the
nontrivial Weyl element of \(\P_{k+a,a}^{G}\). These operators are
normalized compatibly with induction in stages and with the
restricted tensor-product construction described below. Using
compatible local isomorphisms
\(\sigma_v\simeq\sigma_v^\vee\), we identify the target of the
standard intertwining operator with \(I_{d,v}(-z,\rho_v)\). When
the inducing representation is clear, we write \(N_{d,v}(z)\).
In the covering case, we choose at every place a family of
normalizing factors satisfying the conditions of
\cite[Definition~3.1.1 and Theorems~3.2.1, 3.3.1]{LiTrace},
compatibly with induction in stages. At the unramified places we
impose the spherical normalization and use the resulting restricted
tensor product globally. The comparison with the normalization used
in Section~4 is absorbed into the scalar meromorphic function
\(r_d(z,\rho)\) below.

For a standard module \(S\), write \(\operatorname{LQ}(S)\) for its
unique irreducible Langlands quotient, \(S^{\mathrm{opp}}\) for its
opposite induced module.

\begin{lem}
\label{associated-packet-middle-datum}
For every place \(v\), the two outer summands in
\eqref{associated-parameter-Xd} contribute only connected general
linear factors to the centralizer. Consequently, projection to the
middle summand induces a canonical isomorphism
\(
\iota_v:\mathcal S_{\phi_{X_d,v}}
\overset{\sim}{\longrightarrow}\mathcal S_{X_{d,v}^{-}}.
\)
Abbreviate \(q_{X_{d,v}}^{A/L}\) to \(q_v^{A/L}\), and put
\(
q_v=\iota_v\circ q_v^{A/L}:
\mathcal S_{X_{d,v}}
\longrightarrow\mathcal S_{X_{d,v}^{-}}.
\)

Moreover, the map
\(
\rho_v\longmapsto
\pi_{G,\psi_v}(\phi_{X_d,v},\rho_v)
\)
is a bijection from \(\Pi_{X_{d,v}^{-}}\) onto
\(\Pi_{\phi_{X_d,v}}\). Then
\begin{equation}
\label{eq:local-internal-compatibility-Xd}
\left\langle q_v^{A/L}(s),\,
\pi_{G,\psi_v}(\phi_{X_d,v},\rho_v)
\right\rangle_{\phi_{X_d,v},G}
=
\left\langle q_v(s),\rho_v
\right\rangle_{X_{d,v}^{-},G}
\end{equation}
for every \(s\in\mathcal S_{X_{d,v}}\).
\end{lem}

\begin{proof}
The standard-module classification applied to
\eqref{associated-parameter-Xd} gives the asserted bijection. The
outer pair
\(\sigma_v|\cdot|^{d/2}\oplus
\sigma_v^\vee|\cdot|^{-d/2}\)
is a non-self-dual hyperbolic pair. It therefore contributes a
general linear factor to the centralizer, which is connected. If
an isotypic constituent is already present in the middle datum, the
effect is only to enlarge the same connected general linear factor.
This proves the component-group isomorphism. Compatibility with the
internal parametrizations is
\cite[Proposition~7.4.1]{Ae} in the symplectic case. In the
metaplectic case it follows from the standard-module construction
in \cite[Definition~6.2.1]{Li} and the unique compatible inclusion
of the associated \(L\)-packet into the multiplicity-free part of
the \(A\)-packet in \cite[Proposition~6.3.4]{Li}.
\end{proof}

Thus the middle datum parametrizes the entire associated local
\(L\)-packet. To realize these members in residual representations,
we next control the normalized intertwining operators at the
relevant residual points; the only additional input is the following
rank-one regularity statement.

If \(\delta\) and \(\tau\) are irreducible tempered representations
of \(\GL_p(F_v)\) and \(\GL_q(F_v)\), respectively, denote by
\[
R_{\delta,\tau}(u):
\delta|\det|^{u/2}\times\tau|\det|^{-u/2}
\longrightarrow
\tau|\det|^{-u/2}\times\delta|\det|^{u/2}
\]
the normalized rank-one intertwining operator. All normalized
intertwining operators below are chosen compatibly with induction
in stages and with the restricted tensor-product construction.

\begin{lem}
\label{rankoneregularity}
The operator \(R_{\delta,\tau}(u)\) is holomorphic and nonzero for
every real \(u>-1\).
\end{lem}

\begin{proof}
For \(u>0\), the unnormalized standard intertwining integral for
tempered inducing data is absolutely convergent in the open
positive chamber. The normalizing scalar has neither a zero nor a
pole there by \cite[Theorem~2.1, condition {\rm (R7)}]{Arth}.
Thus the normalized operator is holomorphic and nonzero. At
\(u=0\), the same conclusion follows from the unitary-axis
normalization and the functional equation.

Suppose that \(-1<u\leq0\). Normalized parabolic induction from
tempered representations of general linear groups is irreducible
throughout the strip \(|u|<1\). In the non-archimedean case, this
follows from the Zelevinsky classification and the criterion for
linked segments; see \cite{Z}. In the archimedean case, it follows
from the classification of complementary series for general linear
groups; see \cite{Vogan}. The reverse operator
\(R_{\tau,\delta}(-u)\) is holomorphic on the closed positive
chamber, including the unitary point \(u=0\), and is a nonzero map
between irreducible representations; it is therefore an
isomorphism. The functional equation
\(R_{\tau,\delta}(-u)\circ R_{\delta,\tau}(u)=\operatorname{id}\)
then gives
\(R_{\delta,\tau}(u)=R_{\tau,\delta}(-u)^{-1}\).
\end{proof}

Fix a place \(v\), and assume that \(\sigma_v\) is generic and
almost tempered. Write
\[
\sigma_v\simeq
\delta_{1,v}|\det|^{x_{1,v}}
\times\cdots\times
\delta_{r_\sigma,v}|\det|^{x_{r_\sigma,v}},
\qquad
\frac12>x_{1,v}\geq\cdots\geq x_{r_\sigma,v}>-\frac12,
\]
where the \(\delta_{i,v}\) are irreducible tempered
representations. Since \(\sigma_v\) is generic, this normalized
induced representation is irreducible.

Let \(\rho_v\in\Pi_{X_{d,v}^{-}}\) be almost tempered, and write it
as the Langlands quotient of
\[
S_{\rho_v}
=
(\tau_{1,v}|\det|^{y_{1,v}})_{\psi_v,G}
\times\cdots\times
(\tau_{s,v}|\det|^{y_{s,v}})_{\psi_v,G}
\rtimes\rho_{0,v},
\qquad
\frac12>y_{1,v}\geq\cdots\geq y_{s,v}>0,
\]
where the \(\tau_{j,v}\) and \(\rho_{0,v}\) are tempered. In the
metaplectic case, \(\rho_{0,v}\) is genuine.
Whenever some of the exponents \(y_{j,v}\) are equal, we first
group all blocks with the same exponent into the corresponding
tempered representation of a larger Levi factor. In this way
\(S_{\rho_v}\) is understood as arising from the canonical
Langlands datum
\begin{equation}
\label{eq:canonical-Langlands-datum-rhov}
\bigl(\P_{\rho_v},\tau_v^{\mathrm{temp}},\lambda_v\bigr),
\qquad
\lambda_v\in
\bigl(\mathfrak a_{\P_{\rho_v}}^\ast\bigr)^+,
\end{equation}
where the superscript \(+\) denotes the open positive chamber.
The displayed block realization is recovered from
\eqref{eq:canonical-Langlands-datum-rhov} by induction in stages.

Put \(a_{i,v}^{(d)}=d/2+x_{i,v}\). Let
\(\{(\lambda_{\ell,v}^{(d)},c_{\ell,v}^{(d)})\}_{\ell}\) be an
ordering of
\(
\{(\delta_{i,v},a_{i,v}^{(d)})\}_{i}
\cup
\{(\tau_{j,v},y_{j,v})\}_{j}
\)
such that
\(c_{1,v}^{(d)}\geq\cdots\geq
c_{r_\sigma+s,v}^{(d)}>0\). Define
\[
\mathcal I_{X_d,v}^{\mathrm{std}}(\rho_v)
=
(\lambda_{1,v}^{(d)}|\det|^{c_{1,v}^{(d)}})_{\psi_v,G}
\times\cdots\times
(\lambda_{r_\sigma+s,v}^{(d)}
|\det|^{c_{r_\sigma+s,v}^{(d)}})_{\psi_v,G}
\rtimes\rho_{0,v}.
\]
This is the standard module attached to
\((\phi_{X_d,v},\rho_v)\), and
\begin{equation}
\label{local-standard-LQ-Xd}
\operatorname{LQ}
\left(\mathcal I_{X_d,v}^{\mathrm{std}}(\rho_v)\right)
\simeq
\pi_{G,\psi_v}(\phi_{X_d,v},\rho_v).
\end{equation}

When \(d=1\), the exponent
\(a_{i,v}^{(1)}=\frac12+x_{i,v}\) may be smaller than some
\(y_{j,v}\), so a nontrivial shuffle may be required. When \(d=2\),
however,
\begin{equation}
\label{separated-exponents-d2}
a_{i,v}^{(2)}=1+x_{i,v}>\frac12>y_{j,v}
\end{equation}
for every \(i,j\); hence the blocks arising from
\(\sigma_v|\det|\) already precede all positive-exponent blocks of
\(S_{\rho_v}\).

Let \(w_{X_d,v}\) be the long Weyl element associated with the Levi
subgroup of \(\mathcal I_{X_d,v}^{\mathrm{std}}(\rho_v)\), and put
\[
\mathcal I_{X_d,v}^{\mathrm{opp}}(\rho_v)
=
w_{X_d,v}\cdot\mathcal I_{X_d,v}^{\mathrm{std}}(\rho_v).
\]
With our convention, this opposite module is
\[
\big((\lambda_{1,v}^{(d)})^\vee
|\det|^{-c_{1,v}^{(d)}}\big)_{\psi_v,G}
\times\cdots\times
\big((\lambda_{r_\sigma+s,v}^{(d)})^\vee
|\det|^{-c_{r_\sigma+s,v}^{(d)}}\big)_{\psi_v,G}
\rtimes\rho_{0,v}.
\]
Denote the normalized long intertwining operator between these two
modules by
\[
R_{X_d,v}^{\mathrm{long}}:
\mathcal I_{X_d,v}^{\mathrm{std}}(\rho_v)
\longrightarrow
\mathcal I_{X_d,v}^{\mathrm{opp}}(\rho_v).
\]

\begin{prop}
\label{localintertwiningrealization}
Let \(d\in\{1,2\}\), and let \(X_d,X_d^{-}\) be as in
\eqref{general-parameters-Xd}. Fix a place \(v\). Assume that
\(\sigma_v\) is generic and almost tempered and that
\(\rho_v\in\Pi_{X_{d,v}^{-}}\) is almost tempered. Then
\(N_{d,v}(z,\rho_v)\) is holomorphic at \(z=z_d=d/2\). Moreover,
there exist \(G_{k+a}(F_v)\)-homomorphisms
\[
A_{d,v}:
\mathcal I_{X_d,v}^{\mathrm{std}}(\rho_v)
\longrightarrow I_{d,v}\left(\frac d2,\rho_v\right),
\qquad
B_{d,v}:
I_{d,v}\left(-\frac d2,\rho_v\right)
\longrightarrow\mathcal I_{X_d,v}^{\mathrm{opp}}(\rho_v)
\]
such that
\begin{equation}
\label{local-intertwining-realization-Xd}
B_{d,v}\circ N_{d,v}\left(\frac d2,\rho_v\right)\circ A_{d,v}
=
c_{d,v}R_{X_d,v}^{\mathrm{long}},
\qquad c_{d,v}\in\CC^\times.
\end{equation}
Consequently,
\[
\operatorname{Im}
\left(
B_{d,v}\circ N_{d,v}\left(\frac d2,\rho_v\right)\circ A_{d,v}
\right)
\simeq
\pi_{G,\psi_v}(\phi_{X_d,v},\rho_v).
\]
In particular,
\(\pi_{G,\psi_v}(\phi_{X_d,v},\rho_v)\) is an irreducible
subquotient of
\(\operatorname{Im}N_{d,v}(\frac d2,\rho_v)\).
\end{prop}

\begin{proof}
The proof of Proposition~\ref{localintertwiningrealization} is
inspired by the intertwining-operator method of
\cite[Sects.~3.3--3.4 and 6.4--6.5]{AGIKMS}, especially the passage
through an expanded standard module and the identification of the
relevant Langlands quotient by a normalized long intertwining
operator.  Since our residual points, the \(d=1\) shuffle, and the
metaplectic case are not covered there, we give the full argument.

We divide the proof into four steps.

\medskip
\noindent
\textbf{Step 1: Expanded standard-module realizations.}

Let \(q_{\rho_v}:S_{\rho_v}\twoheadrightarrow\rho_v\) be the
canonical Langlands quotient map, and let
\(S_{\rho_v}^{\mathrm{opp}}\) be the opposite standard module. Let
\(
\mathcal J_{\rho_v}:
S_{\rho_v}\longrightarrow S_{\rho_v}^{\mathrm{opp}}
\)
be the standard unnormalized long intertwining operator attached to
the canonical datum
\eqref{eq:canonical-Langlands-datum-rhov}.

Let \(r_{\rho_v}(\lambda)\) denote the meromorphic scalar normalizing
factor for the corresponding family of long intertwining operators,
with respect to the local normalization fixed above. By
\cite[Theorem~2.1, condition {\rm (R7)}]{Arth} in the linear case,
and by \cite[Definition~3.1.1, condition {\rm (R7)},
Sect.~3.2, and Theorem~3.3.1]{LiTrace} in the covering case,
\(r_{\rho_v}(\lambda)\) has neither a zero nor a pole at
\(\lambda=\lambda_v\). The use of the canonical datum is essential
here: it places \(\lambda_v\) in the open positive chamber even when
equal exponents occur in the original block presentation. Therefore,
\(
r_{\rho_v}(\lambda_v)\in\CC^\times.
\)

Accordingly, the normalized long intertwining operator at
\(\lambda=\lambda_v\) is well defined by
\(
R_{\rho_v}
\coloneqq
r_{\rho_v}(\lambda_v)^{-1}\mathcal J_{\rho_v}.
\)
Since \(r_{\rho_v}(\lambda_v)\) is nonzero, the operators
\(\mathcal J_{\rho_v}\) and \(R_{\rho_v}\) have the same kernel and
the same image.

At a non-archimedean place,
\cite[Theorem~3.2]{BJ}, applied to the trivial central extension
in the linear case and to the two-fold central extension
\[
1\longrightarrow\mu_2\longrightarrow\Mp_{2k}(F_v)
\longrightarrow\Sp_{2k}(F_v)\longrightarrow1
\]
in the metaplectic case, shows that the image of
\(\mathcal J_{\rho_v}\) is isomorphic to the Langlands quotient of
\(S_{\rho_v}\). Hence
\[
\operatorname{Im}\mathcal J_{\rho_v}
\simeq
\operatorname{LQ}(S_{\rho_v})
=
\rho_v.
\]
At an archimedean place the analogous assertion is the standard
intertwining realization in the Langlands classification, applied
to the underlying Harish--Chandra modules; see
\cite[Chapter~10]{Wa}. Consequently, there is an embedding
\(j_{\rho_v}:\rho_v\hookrightarrow S_{\rho_v}^{\mathrm{opp}}\),
which we normalize so that
\begin{equation}
\label{eq:rho-long-factorization}
R_{\rho_v}
=
j_{\rho_v}\circ q_{\rho_v}:
S_{\rho_v}\longrightarrow S_{\rho_v}^{\mathrm{opp}}.
\end{equation}

Put
\[
\widetilde I_{d,v}(z)
=
(\sigma_v|\det|^z)_{\psi_v,G}\rtimes S_{\rho_v},
\qquad
\widetilde I_{d,v}^{\mathrm{opp}}(-z)
=
(\sigma_v^\vee|\det|^{-z})_{\psi_v,G}
\rtimes S_{\rho_v}^{\mathrm{opp}}.
\]
By induction in stages, \(\widetilde I_{d,v}(z)\) is identified
with
\[
(\delta_{1,v}|\det|^{z+x_{1,v}})_{\psi_v,G}
\times\cdots\times
(\delta_{r_\sigma,v}|\det|^{z+x_{r_\sigma,v}})_{\psi_v,G}
\times
(\tau_{1,v}|\det|^{y_{1,v}})_{\psi_v,G}
\times\cdots\times
(\tau_{s,v}|\det|^{y_{s,v}})_{\psi_v,G}
\rtimes\rho_{0,v}.
\]
Exactness of normalized parabolic induction gives a surjection
\[
Q_{d,v}(z)=\operatorname{id}\rtimes q_{\rho_v}:
\widetilde I_{d,v}(z)\twoheadrightarrow I_{d,v}(z,\rho_v)
\]
and an injection
\[
J_{d,v}(z)=\operatorname{id}\rtimes j_{\rho_v}:
I_{d,v}(-z,\rho_v)
\hookrightarrow\widetilde I_{d,v}^{\mathrm{opp}}(-z).
\]

\medskip
\noindent
\textbf{Step 2: Regularity of the expanded operator.}

Let
\[
N_{d,v}(z,S_{\rho_v}):
\widetilde I_{d,v}(z)
\longrightarrow
(\sigma_v^\vee|\det|^{-z})_{\psi_v,G}\rtimes S_{\rho_v}
\]
be the normalized operator before passing to the quotient
\(\rho_v\), and put
\[
\widetilde N_{d,v}(z)
=
(\operatorname{id}\rtimes R_{\rho_v})
\circ N_{d,v}(z,S_{\rho_v}).
\]
Thus \(\widetilde N_{d,v}(z)\) maps
\(\widetilde I_{d,v}(z)\) to
\(\widetilde I_{d,v}^{\mathrm{opp}}(-z)\).

All factorizations in this step are taken relative to the common
refined block presentation obtained by expanding the
\(\sigma_v\)- and \(S_{\rho_v}\)-data and grouping equal exponents
within each datum separately; blocks from the two data are not
merged. Put
\(
\lambda_{d,v}
=
\bigl(
a_{1,v}^{(d)},\ldots,a_{r_\sigma,v}^{(d)},
y_{1,v},\ldots,y_{s,v}
\bigr),
\)
and write \(e_i\) and \(f_j\) for the block coordinates
corresponding to \(\delta_{i,v}\) and \(\tau_{j,v}\), respectively.
The Weyl element \(w\) underlying
\(\widetilde N_{d,v}(z_d)\) changes the signs of all these
coordinates: its \(e_i\)-part is contributed by
\(N_{d,v}(z_d,S_{\rho_v})\), and its \(f_j\)-part by
\(R_{\rho_v}\).

A direct calculation of
\(
\operatorname{Inv}(w)=\{\beta>0:w\beta<0\}
\)
shows that its part involving at least one \(e_i\) consists of the
block-root directions
\[
e_i-e_k,\quad e_i+e_k\quad(i<k),\qquad
e_i-f_j,\quad e_i+f_j,
\]
together with the terminal root directions supported on the
\(e_i\). The remaining inversion roots involve only the
\(f_j\)-blocks and the tempered classical factor, and their
rank-one factors are precisely those occurring in \(R_{\rho_v}\).
Hence a reduced expression of \(w\) crosses each of these root
directions exactly once.

For a crossed block root \(\beta\), the corresponding rank-one
factor depends only on the restriction of \(\lambda_{d,v}\) to
the coroot direction \(\beta^\vee\). In the present block
normalization, the four types displayed above give, respectively,
\[
a_{i,v}^{(d)}-a_{k,v}^{(d)},\qquad
a_{i,v}^{(d)}+a_{k,v}^{(d)},\qquad
a_{i,v}^{(d)}-y_{j,v},\qquad
a_{i,v}^{(d)}+y_{j,v}.
\]
The terminal rank-one normalizing factor has the remaining
arguments
\(a_{i,v}^{(d)}\) and \(2a_{i,v}^{(d)}\),
arising from the \(e_i\)- and \(2e_i\)-weight spaces in the
relevant unipotent radical. Finally, the ordering of the internal
blocks gives
\(
a_{i,v}^{(d)}-a_{k,v}^{(d)}\geq0
\qquad(i<k),
\)
so the corresponding rank-one factors lie in the closed positive
chamber.

If \(d=1\), all these parameters are nonnegative except possibly
\(a_{i,v}^{(1)}-y_{j,v}\), and
\(
a_{i,v}^{(1)}-y_{j,v}
=
\frac12+x_{i,v}-y_{j,v}>-\frac12.
\)
After removing the common twist
\(|\det|^{(a_{i,v}^{(1)}+y_{j,v})/2}\), the corresponding factor
is
\(R_{\delta_{i,v},\tau_{j,v}}
(a_{i,v}^{(1)}-y_{j,v})\), which is holomorphic and nonzero by
Lemma~\ref{rankoneregularity}. All remaining factors lie in the
closed positive chamber. For a strictly positive parameter, the
unnormalized standard integral is absolutely convergent and the
normalizing scalar is nonzero by condition {\rm (R7)}. On a
boundary wall one uses the unitary-axis normalization and the
functional equation. These assertions follow from
\cite[Theorem~2.1]{Arth} in the linear case and from
\cite[Definition~3.1.1 and Theorems~3.2.1 and~3.3.1]{LiTrace} in
the archimedean and non-archimedean covering cases, respectively.
This is the general tempered-inducing-data argument and does not
require the classical tempered factor \(\rho_{0,v}\) to be
generic; in the covering case the standard integral is defined
using the canonical splitting over the relevant unipotent
subgroups.
If \(d=2\),
\eqref{separated-exponents-d2} shows that every rank-one parameter
lies in the same region. Hence
\begin{equation}
\label{eq:expanded-holomorphic-Xd}
\widetilde N_{d,v}(z)
\text{ is holomorphic at }z=\frac d2.
\end{equation}

\medskip
\noindent
\textbf{Step 3: Descent of regularity to \(\rho_v\).}

There is a scalar \(b_{d,v}\in\CC^\times\), independent of \(z\),
such that
\begin{equation}
\label{eq:naturality-Ndv}
J_{d,v}(z)\circ N_{d,v}(z,\rho_v)\circ Q_{d,v}(z)
=
b_{d,v}\widetilde N_{d,v}(z)
\end{equation}
as meromorphic families. Indeed, in a chamber of absolute
convergence the unnormalized intertwining integral is natural with
respect to \(q_{\rho_v}\) and \(j_{\rho_v}\). Together with
\eqref{eq:rho-long-factorization}, this identifies the two sides
up to a fixed nonzero scalar. More explicitly, every
\(z\)-dependent rank-one normalizing factor on either side is
attached to a root meeting the same outer datum
\(\sigma_v|\det|^z\). Compatibility with induction in stages
identifies these factors for
\(N_{d,v}(z,\rho_v)\) and
\(N_{d,v}(z,S_{\rho_v})\). The only additional factor, the one
normalizing \(R_{\rho_v}\), is evaluated at the fixed point
\(\lambda_v\) and is therefore independent of \(z\). Hence the
quotient of the two normalizations is the constant \(b_{d,v}\);
it records only the fixed choices of Weyl representatives,
self-duality isomorphisms, and the normalization of
\eqref{eq:rho-long-factorization}. In the linear case this is
\cite[Theorem~2.1, properties {\rm (R2)} and {\rm (R3)}]{Arth};
in the covering case it is
\cite[Definition~3.1.1, properties {\rm (R1)}, {\rm (R3)}, and
{\rm (R5)}, Theorems~3.2.1 and~3.3.1]{LiTrace}.
Meromorphic continuation proves
\eqref{eq:naturality-Ndv} for all \(z\).

Suppose that \(N_{d,v}(z,\rho_v)\) has a pole at \(z_d=d/2\), and
write
\[
N_{d,v}(z,\rho_v)
=
\sum_{\ell=-q}^{\infty}(z-z_d)^\ell N_{d,v,\ell},
\qquad q>0,\quad N_{d,v,-q}\ne0.
\]
By \eqref{eq:expanded-holomorphic-Xd}, the right-hand side of
\eqref{eq:naturality-Ndv} is holomorphic at \(z_d\), so
\(
J_{d,v}(z_d)\circ N_{d,v,-q}\circ Q_{d,v}(z_d)=0.
\)
Since \(Q_{d,v}(z_d)\) is surjective and \(J_{d,v}(z_d)\) is
injective, this forces \(N_{d,v,-q}=0\), a contradiction. Thus
\(N_{d,v}(z,\rho_v)\) is holomorphic at \(z=d/2\). At an
archimedean place, the argument is carried out on the underlying
Harish--Chandra modules, where normalized parabolic induction is
exact.

\medskip
\noindent
\textbf{Step 4: Construction of \(A_{d,v}\) and \(B_{d,v}\).}

When \(d=1\), let
\[
R_{A,1,v}:
\mathcal I_{X_1,v}^{\mathrm{std}}(\rho_v)
\longrightarrow\widetilde I_{1,v}\left(\frac12\right)
\]
be the normalized reordering operator from the combined standard
order to the order in which the blocks from
\(\sigma_v|\det|^{1/2}\) precede those from \(S_{\rho_v}\), and let
\[
R_{B,1,v}:
\widetilde I_{1,v}^{\mathrm{opp}}\left(-\frac12\right)
\longrightarrow\mathcal I_{X_1,v}^{\mathrm{opp}}(\rho_v)
\]
be the corresponding operator on the opposite side. We choose the
order of blocks with equal exponents compatibly on the two sides.
Every rank-one factor in these reordering operators has nonnegative
relative parameter. The positive-parameter factors are holomorphic
and nonzero by the positive-chamber argument, while the
zero-parameter factors are invertible by unitary-axis regularity and
the functional equation. Thus both reordering operators are
holomorphic and nonzero. When
\(d=2\), \eqref{separated-exponents-d2} shows that the expanded
modules are already in the required orders; put
\(R_{A,2,v}=R_{B,2,v}=\operatorname{id}\).

Define
\[
A_{d,v}=Q_{d,v}(z_d)\circ R_{A,d,v},
\qquad
B_{d,v}=R_{B,d,v}\circ J_{d,v}(z_d).
\]
Together with \eqref{eq:naturality-Ndv}, these definitions are
summarized by the following commutative diagram; the top middle
arrow has been multiplied by \(b_{d,v}\) so that the middle square
commutes exactly.
\[
\begin{tikzcd}[column sep=2.7em,row sep=3.2em]
\mathcal I_{X_d,v}^{\mathrm{std}}(\rho_v)
  \arrow[r,"{R_{A,d,v}}"]
  \arrow[dr,"{A_{d,v}}"']
&
\widetilde I_{d,v}(z_d)
  \arrow[r,"{b_{d,v}\widetilde N_{d,v}(z_d)}"]
  \arrow[d,two heads,"{Q_{d,v}(z_d)}"']
&
\widetilde I_{d,v}^{\mathrm{opp}}(-z_d)
  \arrow[r,"{R_{B,d,v}}"]
&
\mathcal I_{X_d,v}^{\mathrm{opp}}(\rho_v)
\\
&
I_{d,v}(z_d,\rho_v)
  \arrow[r,"{N_{d,v}(z_d,\rho_v)}"']
&
I_{d,v}(-z_d,\rho_v)
  \arrow[u,hook,"{J_{d,v}(z_d)}"']
  \arrow[ur,"{B_{d,v}}"']
&
\end{tikzcd}
\]

After passing to a common block refinement of the relevant Levi
data, all operators in the top row are normalized
parabolic-to-parabolic operators. For \(d=1\), the two outside Weyl
elements are the mutually inverse stable shuffles, whereas the
middle Weyl element changes the signs of all block coordinates.
Hence their product is the sign-changing long Weyl element attached
to the refined standard datum; for \(d=2\), both shuffles are the
identity. No length-additivity assertion is needed, since normalized
operators satisfy transitivity for arbitrary intermediate
parabolics. Moreover, \(R_{A,d,v}\) and \(R_{B,d,v}\) are
holomorphic at the indicated parameters by the preceding paragraph,
and \(\widetilde N_{d,v}\) is holomorphic at \(z_d\) by Step~2.
The cocycle relation may therefore be evaluated at \(z_d\). Thus
\cite[Theorem~2.1, properties {\rm (R2)} and {\rm (R3)}]{Arth} in
the linear case and
\cite[Definition~3.1.1, properties {\rm (R1)}, {\rm (R3)}, and
{\rm (R5)}]{LiTrace} in the covering case give
\(e_{d,v}\in\CC^\times\) such that
\[
R_{B,d,v}\circ\widetilde N_{d,v}(z_d)\circ R_{A,d,v}
=
e_{d,v}R_{X_d,v}^{\mathrm{long}}.
\]
The scalar records only the fixed Weyl-conjugation and self-duality
identifications. Taking \(c_{d,v}=b_{d,v}e_{d,v}\) proves
\eqref{local-intertwining-realization-Xd}.

It remains to identify the image of the long operator. Let
\(
\bigl(
\P_{X_d,v},
\tau_{X_d,v}^{\mathrm{temp}},
\lambda_{X_d,v}
\bigr)
\)
be the canonical Langlands datum underlying
\(\mathcal I_{X_d,v}^{\mathrm{std}}(\rho_v)\), obtained by
grouping all blocks with equal exponents into tempered
representations of the corresponding larger Levi factors. Thus
\(
\lambda_{X_d,v}\in
\bigl(\mathfrak a_{\P_{X_d,v}}^\ast\bigr)^+.
\)
Denote by \(R_{X_d,v}^{\mathrm{grp}}\) the normalized long
intertwining operator attached to this grouped datum. Its
normalizing factor satisfies
\(
r_{X_d,v}(\lambda_{X_d,v})\in\CC^\times
\)
by \cite[Theorem~2.1, condition {\rm (R7)}]{Arth} in the linear
case and by
\cite[Definition~3.1.1, condition {\rm (R7)}, Sect.~3.2, and
Theorem~3.3.1]{LiTrace} in the covering case. Hence the normalized
and unnormalized grouped long operators have the same kernel and
image.

The refined block Levi used above and the canonical grouped Levi
need not coincide: for \(d=1\), an exponent
\(a_{i,v}^{(1)}\) may equal some \(y_{j,v}\). Passing from the
refined presentation to the grouped datum only inserts normalized
rank-one operators with relative parameter zero, together with
induction in stages. These zero-parameter operators are
isomorphisms by unitary-axis regularity and the functional equation.
Consequently, transitivity gives, under the resulting
identifications of the standard and opposite modules,
\[
U_{d,v}^{\mathrm{opp}}\circ R_{X_d,v}^{\mathrm{long}}
=
e'_{d,v}R_{X_d,v}^{\mathrm{grp}}\circ U_{d,v}^{\mathrm{std}},
\qquad e'_{d,v}\in\CC^\times,
\]
where \(U_{d,v}^{\mathrm{std}}\) and
\(U_{d,v}^{\mathrm{opp}}\) are isomorphisms. Thus the refined and
grouped long operators have corresponding kernels and images.

At a non-archimedean place,
\cite[Theorem~3.2]{BJ}, applied to the grouped canonical datum,
shows that the image of its unnormalized long operator is the
Langlands quotient. The same conclusion at an archimedean place is
the standard intertwining realization in the Langlands
classification; see \cite[Chapter~10]{Wa}. Therefore,
using \eqref{local-standard-LQ-Xd},
\[
\operatorname{Im}R_{X_d,v}^{\mathrm{long}}
\simeq
\operatorname{LQ}
\left(\mathcal I_{X_d,v}^{\mathrm{std}}(\rho_v)\right)
=
\pi_{G,\psi_v}(\phi_{X_d,v},\rho_v).
\]
This proves the asserted image isomorphism.

Finally,
\[
\operatorname{Im}
\bigl(B_{d,v}\circ N_{d,v}(z_d,\rho_v)\circ A_{d,v}\bigr)
=
B_{d,v}\bigl(\operatorname{Im}(N_{d,v}(z_d,\rho_v)\circ
A_{d,v})\bigr),
\]
and
\[
\operatorname{Im}(N_{d,v}(z_d,\rho_v)\circ A_{d,v})
\subseteq
\operatorname{Im}N_{d,v}(z_d,\rho_v).
\]
Hence \(\pi_{G,\psi_v}(\phi_{X_d,v},\rho_v)\) is a quotient of a
subrepresentation of
\(\operatorname{Im}N_{d,v}(d/2,\rho_v)\), and therefore an
irreducible subquotient of that image.
\end{proof}

Proposition~\ref{localintertwiningrealization} realizes the desired
local packet member as an irreducible subquotient of the image of
the normalized local operator. We now pass from these local
realizations to the global residual representation.

\begin{lem}
\label{localrealization}
Let \(d\in\{1,2\}\), and let \(X_d,X_d^{-}\) be as in
\eqref{general-parameters-Xd}. Let
\(\rho=\bigotimes_v'\rho_v\) be an irreducible cuspidal automorphic
representation of \(G_k(\A)\) with global parameter \(X_d^{-}\),
genuine when \(G_k=\Mp_{2k}\). Assume that
\(\mathcal E^{d-1}(\sigma,\rho)\ne0\). Then
\(
\pi_{X_d}(\rho)
:=
\bigotimes_v'
\pi_{G,\psi_v}(\phi_{X_d,v},\rho_v)
\)
occurs as an irreducible direct summand of
\(\mathcal E^{d-1}(\sigma,\rho)\).
\end{lem}

\begin{proof}
By Lemma~\ref{automatic-almost-tempered}, every \(\sigma_v\) is
generic and almost tempered and every \(\rho_v\) is almost
tempered. Thus
Proposition~\ref{localintertwiningrealization} gives, for every
\(v\), homomorphisms \(A_{d,v}\) and \(B_{d,v}\) such that
\[
\operatorname{Im}
\left(
B_{d,v}\circ N_{d,v}(z_d,\rho_v)\circ A_{d,v}
\right)
\simeq
\pi_{G,\psi_v}(\phi_{X_d,v},\rho_v).
\]
At almost all finite places, normalize all maps so that normalized
spherical vectors are carried to normalized spherical vectors.
Then \(c_{d,v}=1\) for almost all \(v\), and the restricted tensor
products \(A_d=\bigotimes_v'A_{d,v}\) and
\(B_d=\bigotimes_v'B_{d,v}\) are well-defined.

Put
\(I_d(z,\rho)=\bigotimes_v'I_{d,v}(z,\rho_v)\) and
\(N_d(z,\rho)=\bigotimes_v'N_{d,v}(z,\rho_v)\). The normalized
global operator acts on decomposable vectors as this restricted
tensor product, and hence
\[
\operatorname{Im}
\left(
B_d\circ N_d(z_d,\rho)\circ A_d
\right)
\simeq
\pi_{X_d}(\rho).
\]

Let \(M_d(z,\rho)\) be the global standard intertwining operator.
For a scalar meromorphic function \(r_d(z,\rho)\), one has
\(
M_d(z,\rho)=r_d(z,\rho)N_d(z,\rho).
\)
By Proposition~\ref{localintertwiningrealization} and the
restricted tensor-product construction,
\(
N_d(z_d,\rho)\ne0.
\)
Moreover, Propositions~\ref{p2}--\ref{p5} show that
\(M_d(z,\rho)\) has at most a simple pole at \(z=z_d\). Since
\(N_d(z,\rho)\) is holomorphic and nonzero at \(z_d\), the scalar
\(r_d(z,\rho)\) has at most a simple pole there. The nonvanishing
of \(\mathcal E^{d-1}(\sigma,\rho)\) forces this pole to be simple.
Hence
\[
\operatorname*{Res}_{z=z_d}M_d(z,\rho)
=
c_dN_d(z_d,\rho)
\qquad(c_d\in\CC^\times).
\]
Let
\(
\operatorname{res}_d:
I_d(z_d,\rho)\longrightarrow
\mathcal E^{d-1}(\sigma,\rho)
\)
be the residual Eisenstein map. By definition it is surjective.
The identity term in the constant term of the Eisenstein series is
holomorphic at \(z_d\). Hence taking the constant term along
\(\P_{k+a,a}^{G}\) gives
\begin{equation}
\label{eq:constant-term-residual-map-Xd}
\bigl(\operatorname{res}_d(f)\bigr)_{\P_{k+a,a}^{G}}
=
c_dN_d(z_d,\rho)f.
\end{equation}
In particular,
\(\ker(\operatorname{res}_d)\subset
\ker N_d(z_d,\rho)\). Thus \(N_d(z_d,\rho)\) descends to a
surjection
\[
\mathcal E^{d-1}(\sigma,\rho)
\twoheadrightarrow
\operatorname{Im}N_d(z_d,\rho).
\]
Since
\(
\operatorname{Im}(B_d\circ N_d(z_d,\rho)\circ A_d)
\simeq\pi_{X_d}(\rho),
\)
it follows that \(\pi_{X_d}(\rho)\) is an irreducible subquotient
of \(\mathcal E^{d-1}(\sigma,\rho)\).

The residues under consideration are square integrable by the
Langlands square-integrability criterion; compare
\cite[Lemma~I.4.11]{Mo}. Moreover, the residual space attached to
the fixed cuspidal datum has finite length; see the residual
spectral construction in \cite[Chapters~VI--VII]{Mo}. Therefore
\(\mathcal E^{d-1}(\sigma,\rho)\) is a finite-length unitary
representation and is semisimple. Every irreducible subquotient is
accordingly an irreducible direct summand. This proves the lemma.
\end{proof}

To apply this global realization uniformly to an automorphic member of
the associated packet, we must know that the recovered middle datum is
cuspidal; this is exactly Lemma~\ref{tempered-parameter-cuspidal}. We
next show that every automorphic member of the associated global
\(L\)-packet is realized in the corresponding residual representation.

\begin{prop}
\label{voganres}
Let \(d\in\{1,2\}\), and let \(X_d,X_d^{-}\) be as in
\eqref{general-parameters-Xd}. Assume that \(X_d^{-}\) is a
discrete tempered global \(A\)-parameter for \(G_k\). If \(d=1\),
assume in
addition that
\begin{equation}
\label{eq:general-central-assumption}
L\left(\frac12,\sigma\times X\right)\ne0.
\end{equation}

Let \(\Pi=\bigotimes_v'\Pi_v\) be an irreducible discrete
automorphic representation of \(G_{k+a}(\A)\) such that
\(\Pi_v\in\Pi_{\phi_{X_d,v}}\) for every \(v\). Then there exists
an irreducible cuspidal
automorphic representation
\(\rho=\bigotimes_v'\rho_v\) of \(G_k(\A)\), genuine in the
metaplectic case, with global parameter \(X_d^{-}\), such that
\(\Pi\) occurs as an irreducible direct summand of
\(\mathcal E^{d-1}(\sigma,\rho)\).
\end{prop}

\begin{proof}
By Lemma~\ref{associated-packet-middle-datum}, for every place
\(v\) there is a unique \(\rho_v\in\Pi_{X_{d,v}^{-}}\) such that
\begin{equation}
\label{eq:local-associated-member-voganres}
\Pi_v
\simeq
\pi_{G,\psi_v}(\phi_{X_d,v},\rho_v).
\end{equation}
Here and below, the subscript \(\psi_v\) is omitted in the
symplectic case.

The representation \(\rho_v\) is unramified for almost all \(v\),
so \(\rho=\bigotimes_v'\rho_v\) is a well-defined irreducible
admissible representation.

We first locate \(\Pi\) in the global discrete decomposition. By
Lemma~\ref{associated-packet-middle-datum}, every \(\Pi_v\) lies in
the multiplicity-free part of the local \(A\)-packet attached to
\(X_{d,v}\). Since \(\Pi\) is discrete automorphic, uniqueness of
the global Arthur parameter determined by the unramified Satake
parameters shows that \(\Pi\) has global \(A\)-parameter \(X_d\).
In the symplectic case this is part of Arthur's global decomposition
\cite[Theorem~1.5.2]{Ae}; in the metaplectic case it is precisely
\cite[Corollary~5.4.4(i)]{Li}, together with
\cite[Proposition~6.3.4]{Li}.

We now prove that \(\rho\) is automorphic. Retain the maps
\(q_v^{A/L}\), \(\iota_v\), and
\(q_v=\iota_v\circ q_v^{A/L}\) from
Lemma~\ref{associated-packet-middle-datum}.

Let
\(q:\mathcal S_{X_d}\to\mathcal S_{X_d^{-}}\) be the corresponding
global map. It is compatible with localization:
\begin{equation}
\label{eq:localization-component-map-Xd}
(q(s))_v=q_v(s_v),
\qquad s\in\mathcal S_{X_d}.
\end{equation}
The map \(q\) is surjective. If \(d=1\), it maps each generator
coming from \(X=X_1^{-}\) to the corresponding generator and kills
the new generator associated with \(\sigma\boxtimes[1]\). If
\(d=2\), it identifies the generator associated with
\(\sigma\boxtimes[2]\) with that associated with \(\sigma\) in
\(X_2^{-}=\sigma\boxplus X\), and is the identity on the
generators coming from \(X\); hence it is an isomorphism.

Define
\[
\eta_\Pi(s)
=
\prod_v
\left\langle
q_v^{A/L}(s_v),\Pi_v
\right\rangle_{\phi_{X_d,v},G},
\qquad
\eta_\rho(t)
=
\prod_v
\left\langle
t_v,\rho_v
\right\rangle_{X_{d,v}^{-},G}.
\]
Equations \eqref{eq:local-internal-compatibility-Xd} and
\eqref{eq:localization-component-map-Xd} imply
\begin{equation}
\label{eq:eta-compatibility-Xd}
\eta_\Pi=\eta_\rho\circ q.
\end{equation}
By the compatibility of the inclusion of the associated local
\(L\)-packet into the local \(A\)-packet, \(\eta_\Pi\) is precisely
the global packet character of \(\Pi\), viewed as a member of the
global \(A\)-packet attached to \(X_d\).

Recall from Subsection~\ref{compgrp} the character \(\epsilon_Y^{G}\) of
\(\mathcal S_Y\) occurring in the multiplicity formula
\eqref{globalApacket}. We claim that
\begin{equation}
\label{eq:multiplicity-character-compatibility-Xd}
\epsilon_{X_d}^G
=
\epsilon_{X_d^{-}}^G\circ q.
\end{equation}

Write \(X=\boxplus_{i=1}^{t}\tau_i\). Suppose first that \(d=1\).
Let \(a_\sigma\) be the generator associated with
\(\sigma\boxtimes[1]\), and \(a_i\) the generator associated with
\(\tau_i\). Arthur's character satisfies
\[
\epsilon_{X_1}^{\mathrm{Art}}(a_i)
=
\epsilon_X^{\mathrm{Art}}(a_i)
\epsilon\left(\frac12,\sigma\times\tau_i\right),
\qquad
\epsilon_{X_1}^{\mathrm{Art}}(a_\sigma)
=
\prod_i\epsilon\left(\frac12,\sigma\times\tau_i\right).
\]
The assumption \eqref{eq:general-central-assumption} implies
\(L(\frac12,\sigma\times\tau_i)\ne0\) for every \(i\). Since
\(\sigma\) and \(\tau_i\) are self-dual, the functional equation
gives
\(\epsilon(\frac12,\sigma\times\tau_i)=1\). Hence
\(\epsilon_{X_1}^{\mathrm{Art}}
=\epsilon_X^{\mathrm{Art}}\circ q\). In the metaplectic case,
\(\nu_{X_1}\) and \(\nu_X\circ q\) agree on the old generators and
\(
\nu_{X_1}(a_\sigma)
=
\epsilon\left(\frac12,\sigma\right)^2
=
1
=
(\nu_X\circ q)(a_\sigma).
\)
This proves \eqref{eq:multiplicity-character-compatibility-Xd} for
\(d=1\).

Suppose now that \(d=2\). Under the natural identification
\(\mathcal S_{X_2}\simeq\mathcal S_{X_2^{-}}\), the Arthur
characters agree because all summands of \(X\) have
\(SL_2(\CC)\)-dimension \(1\) and
\(\min(3,1)=\min(1,1)=1\). Equivalently, the only additional
Rankin--Selberg root numbers compare self-dual cuspidal
constituents of the same symplectic type and are equal to \(1\);
see \cite[Theorem~1.5.3(b)]{Ae}. In the metaplectic case, the factors
\(\nu_{X_2}\) and \(\nu_{X_2^{-}}\) agree as well, since on the
generator associated with \(\sigma\),
\(
\epsilon\left(\frac12,\sigma\right)^3
=
\epsilon\left(\frac12,\sigma\right),
\)
and they agree trivially on the generators coming from \(X\).
Thus \eqref{eq:multiplicity-character-compatibility-Xd} also holds
for \(d=2\).

Since \(\Pi\) is automorphic, the global multiplicity formula gives
\(\eta_\Pi=\epsilon_{X_d}^G\); see
\cite[Theorem~1.5.2]{Ae} in the symplectic case
(resp. \cite[Theorem~5.4.1]{Li} in the metaplectic case).
Together with \eqref{eq:eta-compatibility-Xd} and
\eqref{eq:multiplicity-character-compatibility-Xd}, this yields
\(\eta_\rho\circ q=\epsilon_{X_d^{-}}^G\circ q\). Since \(q\) is
surjective,
\(
\eta_\rho=\epsilon_{X_d^{-}}^G.
\)
The multiplicity formula for \(X_d^{-}\) shows that \(\rho\) is
automorphic with global \(A\)-parameter \(X_d^{-}\). Since
\(X_d^{-}\) is tempered,
Lemma~\ref{tempered-parameter-cuspidal} shows that \(\rho\) is
cuspidal.

It remains to prove that the relevant residual representation is
nonzero. Suppose first that \(d=1\). The Rankin--Selberg
\(L\)-function attached to \(\sigma\) and \(\rho\) depends only on
\(X_1^{-}=X\). In the symplectic case, the fact that
\(X_1=\sigma\boxtimes[1]\boxplus X\) is a parameter for a
symplectic group implies that \(\sigma\) is of symplectic type.
Thus \(L(s,\sigma,\wedge^2)\) has a pole at \(s=1\), and
\eqref{eq:general-central-assumption} together with
Proposition~\ref{p2} gives
\(\mathcal E^0(\sigma,\rho)\ne0\). In the metaplectic case,
\(\sigma\) is of orthogonal type, so
\(L(s,\sigma,\Sym^2)\) has a pole at \(s=1\). Since
\(
L_{\psi}\left(\frac12,\sigma\times\rho\right)
=
L\left(\frac12,\sigma\times X\right),
\)
the same conclusion follows from Proposition~\ref{p4}.

Suppose now that \(d=2\). Since
\(X_2^{-}=\sigma\boxplus X\), the representation \(\sigma\) is an
isobaric summand of the global parameter of \(\rho\). Hence
Proposition~\ref{p3} in the symplectic case
(resp. Proposition~\ref{p5} in the metaplectic case) gives
\(\mathcal E^1(\sigma,\rho)\ne0\). Thus
\(\mathcal E^{d-1}(\sigma,\rho)\ne0\) in either case.

Since \(\sigma\) is cuspidal on \(\GL_a\), all \(\sigma_v\) are
generic. Lemma~\ref{localrealization} and
\eqref{eq:local-associated-member-voganres} now show that \(\Pi\)
occurs as an irreducible direct summand of
\(\mathcal E^{d-1}(\sigma,\rho)\).
\end{proof}

To obtain nonemptiness, choose an automorphic representation
\(\rho\) with global \(A\)-parameter \(X_d^{-}\), as supplied by
the global multiplicity formula; see \cite[Theorem~1.5.2]{Ae} in
the symplectic case and \cite[Theorem~5.4.1]{Li} in the
metaplectic case. It is cuspidal by
Lemma~\ref{tempered-parameter-cuspidal}. Under the hypotheses below,
Propositions~\ref{p2}--\ref{p5} give
\(\mathcal E^{d-1}(\sigma,\rho)\ne0\), and
Lemma~\ref{localrealization} therefore supplies an automorphic
member of the associated global \(L\)-packet. Combining this with
Proposition~\ref{voganres}, we obtain the following corollary.

\begin{cor}
\label{voganrescor}
Let \(d\in\{1,2\}\), and let \(X_d,X_d^{-}\) be as in
\eqref{general-parameters-Xd}. Assume that \(X_d^{-}\) is a
discrete tempered global \(A\)-parameter for \(G_k\). If \(d=1\),
assume in addition that
\(
L\left(\frac12,\sigma\times X\right)\ne0.
\)
Then
\[
\varnothing\ne
\Pi_{\phi_{X_d}}^{\mathrm{aut}}
\subset
\Irr_{\mathrm{res}}(G_{k+a}).
\]
In particular,
\[
\Pi_{\phi_{X_d}}^{\mathrm{aut}}
\cap
\Irr_{\mathrm{cusp}}(G_{k+a})
=
\varnothing.
\]
Thus no additional central-value hypothesis is required when
\(d=2\), whereas the above nonvanishing condition is required by
the present argument when \(d=1\).
\end{cor}

\begin{rem}
\label{skcounter}
The central nonvanishing hypothesis in the case \(d=1\) cannot in
general be omitted. This is already visible in the
Saito--Kurokawa case. Let \(\pi\) be a cuspidal automorphic
representation of \(\mathrm{PGL}_2(\A)\) such that
\(
\epsilon\left(\frac12,\pi\right)=1,\ L\left(\frac12,\pi\right)=0,
\)
and consider the Saito--Kurokawa parameter
\(
M_\pi
=
\pi\boxtimes[1]\boxplus\mathbf 1.
\)
For example, one may take \(\pi\) associated with a weight-two
newform of analytic rank two. The local Saito--Kurokawa
representation at every place is the nontempered Langlands
quotient determined by \(\phi_{M_\pi,v}\); see
\cite[Lemma~2.2]{SchmidtSK}. Nevertheless,
\cite[Theorem~3.1]{SchmidtSK}, applied with \(S=\varnothing\),
shows that the resulting global representation is cuspidal when
\(L(\frac12,\pi)=0\). After passage from
\(\mathrm{PGSp}_4\) to the corresponding symplectic packet, this
gives a cuspidal automorphic member of the associated global
\(L\)-packet of \(M_\pi\).

By contrast, when \(L(\frac12,\pi)\ne0\), the member with
\(S=\varnothing\) belongs to the residual spectrum. Thus the
vanishing of the central value need not make the associated
automorphic \(L\)-packet empty: it may replace its residual
realization by a cuspidal CAP realization. Consequently, the
conclusion of Corollary~\ref{voganrescor} may fail if its
\(d=1\) central nonvanishing hypothesis is removed.
\end{rem}

The preceding corollary suggests the following multiblock
generalization.

\begin{con}
\label{multiblockres}
Let \(M_{\mathrm{core}}\) be a discrete tempered global
\(A\)-parameter for \(\Sp_{2n_0}\), and let
\(\sigma_i\) be pairwise inequivalent irreducible cuspidal
automorphic representations of
\(\GL_{a_i}(\A)\) of symplectic type, for \(1\le i\le r\), where
\(r\ge1\). Put
\(
n=n_0+\sum_{i=1}^{r}a_i
\)
and assume that
\(
M
=
\left(
\boxplus_{i=1}^{r}\sigma_i\boxtimes[1]
\right)
\boxplus M_{\mathrm{core}}
\)
is a discrete global \(A\)-parameter for \(\Sp_{2n}\). If
\(
\prod_{i=1}^{r}
L\left(
\frac12,\sigma_i\times M_{\mathrm{core}}
\right)
\ne0,
\)
then every automorphic member of the associated global
\(L\)-packet is residual; equivalently,
\[
\Pi_{\phi_M}^{\mathrm{aut}}
\subset
\Irr_{\mathrm{res}}(\Sp_{2n}).
\]
In particular,
\[
\Pi_{\phi_M}^{\mathrm{aut}}
\cap
\Irr_{\mathrm{cusp}}(\Sp_{2n})
=
\varnothing.
\]
\end{con}

\subsubsection{The GGP implication under central nonvanishing}
\label{residual-ggp-central-nonvanishing}

We now specialize the preceding results to the relevant pair of
parameters in Conjecture~\ref{conn}~(ii). Let \(n\geq m\), and let
\[
M_1=\boxplus_{i=1}^{t_1}\sigma_i,
\qquad
N_1=\boxplus_{j=1}^{t_2}\sigma_j'
\]
be discrete tempered global \(A\)-parameters for \(\Sp_{2n}\) and
\(\Mp_{2m}\), respectively. Thus each \(\sigma_i\) is of
orthogonal type, whereas each \(\sigma_j'\) is of symplectic type.
Lemma~\ref{automatic-almost-tempered} supplies all local
almost-temperedness conditions needed below; no local Ramanujan
hypothesis is imposed.

Fix an irreducible cuspidal automorphic representation \(\sigma\)
of \(\GL_a(\A)\) such that \(\sigma\simeq\sigma_{j_0}'\) for some
\(j_0\), and put
\[
N_0
=
\boxplus_{\substack{1\leq j\leq t_2\\j\ne j_0}}\sigma_j',
\qquad
N_1=\sigma\boxplus N_0.
\]
Define
\[
M=\sigma\boxtimes[1]\boxplus M_1,
\qquad
N=\sigma\boxtimes[2]\boxplus N_0.
\]
Then \(M\) is a discrete global \(A\)-parameter for
\(\Sp_{2n+2a}\), and \(N\) is one for \(\Mp_{2m+2a}\). Moreover,
\((M,N)\) is a relevant pair in the sense of
Definition~\ref{drel}. Their associated global \(L\)-parameters are
\[
\phi_M
=
\sigma|\cdot|^{1/2}\boxplus M_1
\boxplus\sigma^\vee|\cdot|^{-1/2},
\qquad
\phi_N
=
\sigma|\cdot|\boxplus N_1
\boxplus\sigma^\vee|\cdot|^{-1}.
\]

Assume that
\begin{equation}
\label{centralass}
L\left(\frac12,\sigma\times M_1\right)
=
\prod_{i=1}^{t_1}
L\left(\frac12,\sigma\times\sigma_i\right)
\ne0.
\end{equation}

We are now ready to prove the direction
\((1)\Rightarrow(2)\) in Conjecture~\ref{conn}~(ii).

\begin{thm}
\label{non}
Let
\(\pi_1\in\Pi_{\phi_M}^{\mathrm{aut}}\) and
\(\pi_2\in\Pi_{\phi_N}^{\mathrm{aut}}\). If
\(
\mathcal FJ_{\psi}
\left(
\pi_1,\pi_2,\nu_{\psi^{-1},W_{m+a}}
\right)
\ne0,
\)
then \(\left.L(z,M,N)\right|_{z=0}\ne0\).
\end{thm}

\begin{proof}
Apply Proposition~\ref{voganres} with
\((d,X,G)=(1,M_1,\Sp)\) and
\((d,X,G)=(2,N_0,\Mp)\). We obtain irreducible cuspidal
automorphic representations
\(
\rho\in\Pi_{\phi_{M_1}}^{\mathrm{aut}},\ \widetilde{\rho}\in\Pi_{\phi_{N_1}}^{\mathrm{aut}}
\)
such that \(\pi_1\) and \(\pi_2\) occur as irreducible direct
summands of \(\mathcal E^0(\sigma,\rho)\) and
\(\mathcal E^1(\sigma,\widetilde{\rho})\), respectively.

At this point it matters that \(\mathcal{FJ}_\psi\) is a functional on
spaces of automorphic forms and not on abstract representations.  The
hypothesis provides the nonvanishing of \(\mathcal{FJ}_\psi\) on the
automorphic realization of \(\pi_1\boxtimes\pi_2\), whereas
Proposition~\ref{voganres} produces a copy of \(\pi_1\boxtimes\pi_2\)
inside the two residual representations; for the hypothesis to be of any
use, these two spaces of automorphic forms must coincide.  This is
exactly what multiplicity one supplies: each of \(\pi_1\) and \(\pi_2\)
occurs exactly once in the discrete automorphic spectrum, by Arthur's
multiplicity formula in the symplectic case and by
\cite[Corollary~5.4.4(ii)]{Li} in the metaplectic case, so its
automorphic realization is unique.  We may therefore identify \(\pi_1\)
and \(\pi_2\) with the corresponding direct summands of the two residual
representations. Under these
identifications, the regularized
Fourier--Jacobi period on the two ambient residual representations
restricts to the given Fourier--Jacobi period on
\(\pi_1\boxtimes\pi_2\). Since this restriction is nonzero, we
obtain
\[
\mathcal FJ_{\psi}
\left(
\mathcal E^0(\sigma,\rho),
\mathcal E^1(\sigma,\widetilde{\rho}),
\nu_{\psi^{-1},W_{m+a}}
\right)
\ne0.
\]

Using the Clebsch--Gordan and symmetric/exterior-square identities
in Fact~\ref{fac}, with the roles of the orthogonal and symplectic
types interchanged, we obtain
\begin{align*}
M\otimes N
&=
(\sigma\otimes\sigma)
\boxtimes\bigl([3]\boxplus[1]\bigr)
\boxplus(\sigma\otimes N_0)\boxtimes[1]\\
&\quad\boxplus(M_1\otimes\sigma)\boxtimes[2]
\boxplus(M_1\otimes N_0),\\
\wedge^2(M)
&=
\Sym^2(\sigma)
\boxplus\wedge^2(\sigma)\boxtimes[2]
\boxplus(\sigma\otimes M_1)\boxtimes[1]
\boxplus\wedge^2(M_1),\\
\Sym^2(N)
&=
\Sym^2(\sigma)\boxtimes\bigl([4]\boxplus[0]\bigr)
\boxplus\wedge^2(\sigma)\boxtimes[2]\\
&\quad\boxplus(\sigma\otimes N_0)\boxtimes[2]
\boxplus\Sym^2(N_0).
\end{align*}
Here \(M\) is of orthogonal type and \(N\) is of symplectic type.
Thus the opposite-orientation convention gives
\begin{equation}
\label{eq:opposite-orientation-LMN}
L(z,M,N)
=
\frac{L(z+\frac12,M\times N)}
{L(z+1,\wedge^2(M))\,L(z+1,\Sym^2(N))}.
\end{equation}
Put
\(A=\ord_{s=1/2}L(s,M_1\times\sigma)\) and
\(B=\ord_{s=1/2}L(s,M_1\times N_0)\), where a positive order
denotes a zero and a negative order a pole. The standard analytic
properties of Rankin--Selberg, exterior-square, and
symmetric-square \(L\)-functions give
\begin{align*}
\ord_{z=0}L\left(z+\frac12,M\times N\right)&=-4+A+B,\\
\ord_{z=0}L\left(z+1,\wedge^2(M)\right)&=-2+A,\\
\ord_{z=0}L\left(z+1,\Sym^2(N)\right)&=-2.
\end{align*}
Therefore
\begin{equation}
\label{ordratio}
\ord_{z=0}L(z,M,N)=B.
\end{equation}

By Theorem~\ref{rthm}, the nonvanishing of the residual
Fourier--Jacobi period implies
\(
\mathcal FJ_{\psi}
\left(
\rho,\widetilde{\rho},\nu_{\psi^{-1},W_m}
\right)
\ne0.
\)
Theorem~\ref{e} now gives
\[
L_{\psi}\left(\frac12,\rho\times\widetilde{\rho}\right)
=
L\left(\frac12,M_1\times N_1\right)
\ne0.
\]
Since \(N_1=\sigma\boxplus N_0\), this value equals
\[
L\left(\frac12,M_1\times\sigma\right)
L\left(\frac12,M_1\times N_0\right).
\]
Hence \(L(\frac12,M_1\times N_0)\ne0\), so \(B=0\).
Equation~\eqref{ordratio} yields
\(\ord_{z=0}L(z,M,N)=0\). Thus \(L(z,M,N)\) is holomorphic and
nonzero at \(z=0\).
\end{proof}

\begin{rem}\label{conb}
We have treated the case in which \(M\) is a parameter for the
symplectic group and \(N\) is a parameter for the metaplectic
group. The opposite case is proved in the same manner, with the
roles of the orthogonal and symplectic parameters interchanged,
and hence with the roles of the exterior-square and
symmetric-square factors interchanged.
\end{rem}

Theorem~\ref{non} treats the case
\(
L\left(\frac12,\sigma\times M_1\right)\ne0.
\)
This hypothesis is exactly what produces the residual realization of
the associated global \(L\)-packet used in its proof. When the central
value vanishes, that realization is no longer available, and the
associated automorphic \(L\)-packet has a different, cuspidal
realization. It is therefore natural to ask whether
the implication
\((1)\Rightarrow(2)\) in Conjecture~\ref{conn}~(ii) remains valid for
such a cuspidal member. In the next subsection we answer this question
for the basic metaplectic parameter
\(M=[1]\boxplus M'\) satisfying \(L\left(\frac12,M'\right)=0\).

\subsection{The Fourier--Jacobi period for central-zero associated
global \(L\)-packets}
\label{gen}

We now turn to the complementary central-vanishing situation
described at the end of the preceding subsection. In this case the
associated automorphic \(L\)-packet is nonempty and all of its members
are cuspidal. Accordingly, the result below is formulated for an
arbitrary automorphic member of the packet.

\begin{thm}
\label{non0}
Let \(n\geq1\). Let \(M'\) be a discrete tempered symplectic global
\(A\)-parameter of dimension \(2n-2\), let \(N\) be a discrete
tempered global \(A\)-parameter for \(\Sp_{2n}\), and put
\(
M=[1]\boxplus M'.
\)
Assume that
\(
L\left(\frac12,M'\right)=0.
\)
Let
\(
\widetilde\pi
\in
\Pi_{\phi_M}^{\mathrm{aut}},  \pi\in\Pi_{\phi_N}^{\mathrm{aut}}.
\)

If
\(
\mathcal FJ_\psi
\left(
\widetilde\pi,\pi,\omega_{\psi^{-1},W_n}
\right)\ne0,
\)
then \((M,N)\) is a relevant pair and
\[
\left.L(z,M,N)\right|_{z=0}\ne0.
\]
\end{thm}

The proof uses the relation between Bessel and Fourier--Jacobi
periods provided by the global theta correspondence. We first fix the
notation and isolate the theta-lift realization of
\(\widetilde\pi\).

Let \((V,(\ ,\ )_V)\) be a quadratic space and let
\((W,\langle\ ,\ \rangle_W)\) be a symplectic space over \(F\). We
equip \(V\otimes W\) with the symplectic form
\(
(v_1\otimes w_1,v_2\otimes w_2)
=
(v_1,v_2)_V\langle w_1,w_2\rangle_W.
\)
The restriction of the Weil representation of
\(\Mp(V\otimes W)(\A)\) gives the usual theta correspondence for the
dual pair \(\mathrm O(V)\times\Mp(W)\). When \(\dim V\) is even, the
metaplectic cover splits over \(\Sp(W)(\A)\). For any nontrivial
additive character \(\eta\) of \(F\backslash\A\), we write
\(\Theta_{\eta,V,W}\) for the resulting global theta lift.

Strictly speaking, the orthogonal member of the dual pair is
\(\mathrm O(V)\). Whenever a lift is viewed as a representation of
\(\SO(V)\), we choose an irreducible constituent of its restriction.
Conversely, we choose an extension to \(\mathrm O(V)\) that
participates in the nonzero theta correspondence. This
convention will also be used in the seesaw identity below.

Let \(\mc X\) be a Lagrangian subspace of \(V\otimes W\). For
\(f\in\mc S(\mc X(\A))\), let
\(\Theta_{\eta,V,W}(f)\) be the corresponding global theta
function. For a dual pair \((G,H)\), if  \(\tau\) is a cuspidal automorphic representation of \(G\),  its global theta lift is generated by the
absolutely convergent integrals
\begin{equation}
\label{strange}
\theta(f,\varphi)(h)
=
\int_{[\mathrm G]}
\Theta_{\eta,V,W}(f)(g,h)\varphi(g)\,dg.
\end{equation}

\begin{prop}[{\cite[Proposition~3.1]{Gan}}]
\label{Gan}
If a global theta lift of a cuspidal representation is cuspidal, then
it is irreducible and isomorphic to the restricted tensor product of
the corresponding local theta lifts.
\end{prop}

We first determine the spectral nature of the associated packet and
show that it is nonempty. This is the point at which the vanishing of
the central \(L\)-value replaces the residual realization of the
preceding subsection by a cuspidal theta-lift realization.

\begin{prop}
\label{central-zero-associated-packet}
Let \(M'\) be a discrete tempered symplectic global \(A\)-parameter
of dimension \(2n-2\), and put
\(
M=[1]\boxplus M'.
\)
If
\(
L\left(\frac12,M'\right)=0,
\)
then
\[
\varnothing\ne
\Pi_{\phi_M}^{\mathrm{aut}}
\subset
\Irr_{\mathrm{cusp}}(\Mp_{2n}).
\]
Moreover, \(\Pi_{\phi_M}^{\mathrm{aut}}\) contains a globally
generic member.
\end{prop}

\begin{proof}
We first prove cuspidality. Let
\(\widetilde\pi\in\Pi_{\phi_M}^{\mathrm{aut}}\), and suppose that
\(\widetilde\pi\) is not cuspidal. Being discrete automorphic, it is
then residual: by Langlands' spectral decomposition
\cite[Chapter~VII]{la2}, \cite[Chapters~VI--VII]{Mo}, there are a
proper parabolic subgroup \(\wt\P=\wt\M\U\) of \(\Mp_{2n}\) with
\[
 \wt\M\simeq\wt\GL_{a_1}\times\cdots\times\wt\GL_{a_t}
 \times_{\{\pm1\}}\Mp_{2n_0},
 \qquad t\ge1,
\]
an irreducible \emph{cuspidal} automorphic representation
\[
 \varrho=(\varrho_1)_\psi\boxtimes\cdots\boxtimes(\varrho_t)_\psi
 \boxtimes\varrho_0
\]
of \(\wt\M(\A)\), and a point
\(\lambda_0=(c_1,\ldots,c_t)\) with \(c_1>\cdots>c_t>0\), such that
\(\widetilde\pi\) is an iterated residue at \(\lambda_0\) of the
associated Eisenstein series. The positivity of the \(c_i\) is
Langlands' square-integrability criterion
\cite[Lemma~I.4.11]{Mo}; the covering group causes no difficulty,
since it splits canonically over unipotent radicals (compare
\cite[Sects.~3.1--3.3]{LiTrace}).

In particular \(\widetilde\pi\) is a subquotient of the
representation induced from \(\varrho\otimes\lambda_0\), so at every
place where all data are unramified the Satake parameters of
\(\widetilde\pi_v\) are those of that induced representation. By the
decomposition of the genuine discrete spectrum by global Arthur
parameters \cite[Theorems~5.1.1 and~5.4.1]{Li},
\(\varrho_0\) has a discrete global \(A\)-parameter \(Y_0\). Write
\(\operatorname{Std}(Y_0)\) for the standard isobaric automorphic
representation attached to the associated global \(L\)-parameter
\(\phi_{Y_0}\); at almost all unramified places its Satake parameter
is the standard Satake parameter of \(\varrho_{0,v}\). Comparing the
standard parameters at almost all places and invoking strong
multiplicity one together with the uniqueness of isobaric
decompositions \cite{JS0}, we obtain
\[
 \BIGboxplus_{i=1}^{t}
 \left(\varrho_i|\cdot|^{c_i}\boxplus
 \varrho_i^\vee|\cdot|^{-c_i}\right)
 \boxplus \operatorname{Std}(Y_0)
 =
 \tbf1|\cdot|^{1/2}\boxplus M'\boxplus\tbf1|\cdot|^{-1/2}.
\]
Since \(M'\) is tempered, the only non-unitary summands on the right
are the two one-dimensional summands
\(\tbf1|\cdot|^{\pm1/2}\). The displayed \(2t\) summands contributed
by the proper Levi on the left are all non-unitary because
\(c_i>0\). Uniqueness of the isobaric decomposition therefore gives
\(2t\le2\). Since \(t\ge1\), it follows that \(t=1\), and comparison
of the two non-unitary summands gives
\(
a_1=1,\ \varrho_1=\tbf1,\ c_1=\frac12.
\)
The remaining summands then give
\(\operatorname{Std}(Y_0)=M'\). In particular \(Y_0\) is tempered,
and uniqueness of the global Arthur decomposition gives
\(Y_0=M'\) and \(n_0=n-1\).

Thus \(\widetilde\pi\) occurs in the residual representation
\(\mathcal E^0(\tbf1,\varrho_0)\) attached to the cuspidal datum
\((\tbf1)_\psi\boxtimes\varrho_0\) on \(\wt\P_{n,1}\), where
\(\varrho_0\) is cuspidal on \(\Mp_{2n-2}(\A)\) with tempered global
\(A\)-parameter \(M'\). Proposition~\ref{p4} applies with
\(\sigma=\tbf1\) and \(\pi'=\varrho_0\); as
\(L(z,\tbf1,\Sym^2)=\zeta_F(z)\) does have a pole at \(z=1\), it
shows that \(\mathcal E^0(\tbf1,\varrho_0)\ne0\) forces
\[
 L_\psi\left(\tfrac12,\tbf1\times\varrho_0\right)
 =L\left(\tfrac12,M'\right)\ne0,
\]
contradicting the hypothesis. Hence every automorphic member of the
associated global \(L\)-packet is cuspidal.

It remains to prove that the packet is nonempty. By automorphic
descent \cite[Theorem~10.1]{GRS}, there is a globally generic
irreducible cuspidal automorphic representation
\(
\tau\subset
\mathcal A_{\mathrm{cusp}}\left(\SO(V_{2n-1})\right)
\)
of the split odd special orthogonal group whose standard global
parameter is \(M'\). Apply \cite[Theorem~5.2]{HK} with
\(\epsilon=1\). Since
\(
L\left(\frac12,\tau\right)
=
L\left(\frac12,M'\right)
=0,
\)
the global theta lift of \(\tau\) to \(\Mp(W_{n-1})\) vanishes.
On the other hand, \cite[Corollary~4.3]{HK} shows that
\(
\widetilde\pi_0
\coloneqq
\Theta_{\psi,V_{2n-1},W_n}(\tau)
\)
is nonzero and globally generic; because the preceding lift in the
tower vanishes, it is also cuspidal.

We verify that \(\widetilde\pi_0\) belongs to the associated global
\(L\)-packet. Proposition~\ref{Gan} identifies it with the restricted
tensor product of its local theta lifts. At every non-archimedean
place \(v\), write the extension of \(\tau_v\) used in the theta
correspondence as the Langlands quotient
\[
\operatorname{LQ}\bigl(
\delta_{r,v}|\det|^{s_{r,v}}\times\cdots\times
\delta_{1,v}|\det|^{s_{1,v}}\rtimes\tau_{0,v}\bigr),
\qquad \tau_{0,v}\ \text{tempered}.
\]
By Lemma~\ref{automatic-almost-tempered}, \(0<s_{i,v}<\frac12\).
The explicit higher-lift formula
\cite[Theorem~6.1]{BHtc}, applied with the roles interchanged, shows
that
\[
\widetilde\pi_{0,v}
\simeq
\operatorname{LQ}\bigl(
|\cdot|_v^{1/2}\times
\delta_{r,v}|\det|^{s_{r,v}}\times\cdots\times
\delta_{1,v}|\det|^{s_{1,v}}
\rtimes\theta_0(\tau_{0,v})\bigr).
\]
Here the quadratic twists in the notation of \cite{BHtc} are trivial
for the present split pair.
Since \(\tau_{0,v}\) is tempered,
\cite[Theorem~4.3(3),(4)]{AG} identifies the parameter of
\(\theta_0(\tau_{0,v})\) with that of \(\tau_{0,v}\). Hence
\[
\phi_{\widetilde\pi_{0,v}}
=
|\cdot|_v^{1/2}
\boxplus\phi_{M',v}
\boxplus|\cdot|_v^{-1/2}
=
\phi_{M,v}.
\]
At a real place, the equal-rank metaplectic
Langlands--Vogan parametrization is the one defined by theta
correspondence in \cite[Theorem~11.1]{Gan2}, based on
\cite{AB}; compatibility with Langlands classification and the
archimedean induction principle gives the same higher-lift formula.
At a complex place the corresponding assertion follows from
\cite{ABc}. Consequently,
\(
\widetilde\pi_{0,v}
\in
\Pi_{\phi_{M,v}}
\text{ for every place }v.
\)
Since \(\widetilde\pi_0\) is automorphic, this proves
\(\widetilde\pi_0\in
\Pi_{\phi_M}^{\mathrm{aut}}\), and hence proves both the
nonemptiness and the final assertion.
\end{proof}

The preceding proposition constructs one globally generic member by
theta lifting. For Theorem~\ref{non0}, however, the given
\(\widetilde\pi\) is an arbitrary automorphic member of the
associated packet, so we need the converse realization furnished by
the next lemma.

\begin{lem}
\label{theta-realization-associated}
Let \(M'\), \(M\), and \(\widetilde\pi\) be as in
Theorem~\ref{non0}. There exist a quadratic space \(V_{2n-1}\) of
dimension \(2n-1\) and an irreducible cuspidal automorphic
representation
\(
\tau\subset
\mathcal A_{\mathrm{cusp}}
\left(\SO(V_{2n-1})\right)
\)
with global \(A\)-parameter \(M'\) such that
\(
\widetilde\pi
\simeq
\Theta_{\psi,V_{2n-1},W_n}(\tau).
\)
Here the theta lift is understood using the extension of \(\tau\) to
\(\mathrm O(V_{2n-1})(\A)\) selected by the theta correspondence.
\end{lem}

\begin{proof}
By Proposition~\ref{central-zero-associated-packet},
\(\widetilde\pi\) is cuspidal. The associated global
\(L\)-parameter of \(M=[1]\boxplus M'\) is
\(
\phi_M
=
|\cdot|^{1/2}\boxplus\phi_{M'}
\boxplus|\cdot|^{-1/2}.
\)
Since \(\widetilde\pi\in\Pi_{\phi_M}^{\mathrm{aut}}\subset\Pi_M\) and the
only summand of \(M\) carrying a nontrivial \(\SL_2(\CC)\)-factor is
\(\tbf1\boxtimes[1]\), Proposition~\ref{Leq} applies and shows that the
completed standard \(L\)-function of \(\widetilde\pi\) occurring in
Yamana's theta-lifting criterion is
\begin{equation}
\label{eq:standard-L-associated-cusp}
L^{\mathrm{std}}(s,\widetilde\pi)
=
L(s,M)
=
\zeta_F\left(s+\frac12\right)
L(s,M')
\zeta_F\left(s-\frac12\right).
\end{equation}
The right-hand side has a pole at \(s=-\frac12\) and has no pole at
the points farther to the left that occur in the same parity tower.
Indeed, the pole at \(-\frac12\) is contributed by
\(\zeta_F(s+\frac12)\), while the remaining completed factors are
nonzero there.

We apply Yamana's theorem with the trivial quadratic character. In the
notation of \cite[Theorem~10.1]{Y0}, the symplectic space has dimension
\(2n\), so that \(\rho_{2n}=2n+1\), and an orthogonal space of dimension
\(2n-1\) corresponds to the point
\(
\frac12\bigl((2n-1)-\rho_{2n}+1\bigr)=-\frac12,
\)
which is exactly the pole located above; moreover the trivial quadratic
character fixes the discriminant tower \(\disc(V_{2n-1})=1\).  Thus
\cite[Theorem~10.1]{Y0}, together with the tower property
\cite[Proposition~10.1]{Y0}, gives a quadratic space \(V_{2n-1}\)
for which
\(
\widehat\tau
\coloneqq
\Theta_{\psi,W_n,V_{2n-1}}(\widetilde\pi)
\)
is nonzero. At almost all places, the unramified theta
correspondence removes the two outer characters
\(|\cdot|^{1/2}\) and \(|\cdot|^{-1/2}\) from \(\phi_{M,v}\).
Equivalently, this follows from Kudla's supercuspidal-support theorem
\cite{Ku1,Ku2}. Hence every irreducible constituent \(\tau\) of the
restriction of \(\widehat\tau\) to
\(\SO(V_{2n-1})(\A)\) has global \(A\)-parameter \(M'\), and is
therefore cuspidal by Lemma~\ref{tempered-parameter-cuspidal}. Thus
\(\widehat\tau\) is a nonzero cuspidal global theta lift of the
cuspidal representation \(\widetilde\pi\), and is irreducible by
Proposition~\ref{Gan}. The involutive property of the global theta
correspondence for the odd orthogonal--metaplectic dual pair
\cite[Theorem~1.3]{JiangSoudryTheta} now gives
\[
\Theta_{\psi,V_{2n-1},W_n}(\widehat\tau)
\simeq \widetilde\pi.
\]
Taking \(\tau\) to be an irreducible constituent of
\(\widehat\tau|_{\SO(V_{2n-1})(\A)}\), and \(\widehat\tau\) as the
extension selected by the theta correspondence, proves the lemma.
\end{proof}

We can now transfer the Fourier--Jacobi period to an orthogonal
Bessel period by means of the theta realization and a seesaw
identity.

\begin{proof}[Proof of Theorem~\ref{non0}]
Choose \(V_{2n-1}\) and \(\tau\) as in
Lemma~\ref{theta-realization-associated}. Put
\(
V_1=\langle-1\rangle,
\qquad
V_{2n}=V_{2n-1}\oplus V_1.
\)
For a one-dimensional quadratic space \(\langle a\rangle\), our
discriminant convention gives
\(
\disc\bigl(V_{2n-1}\oplus\langle a\rangle\bigr)
=-a\,\disc(V_{2n-1}).
\)
Hence \(\disc(V_{2n})=1\), and consequently
\(\chi_{V_{2n}}=\mathbf1\). Moreover,
\(\psi_{-1}(x)=\psi(-x)=\psi(x)^{-1}\), and therefore
\(
\omega_{\psi,V_1,W_n}
\simeq
\omega_{\psi^{-1},W_n}.
\)
Thus the choice \(V_1=\langle-1\rangle\) simultaneously matches the
Weil factor in the Fourier--Jacobi period and the trivial
discriminant character needed below. In particular,
\(\SO(V_{2n})\times\SO(V_{2n-1})\) is the corresponding relevant pair
of pure inner forms.

The seesaw identity is taken for the full orthogonal groups:
\[
\xymatrix{
  \mathrm O(V_{2n}) \ar @{-}[d]\ar @{-}[dr] &
  \Mp(W_n)\times_{\{\pm1\}}\Mp(W_n) \ar @{-}[dl]\ar @{-}[d] \\
  \mathrm O(V_{2n-1})\times \mathrm O(V_1) & \Sp(W_n).
}
\]
Here \(\mathrm O(V_1)=\{\pm1\}\). We take the
\(\mathrm O(V_1)\)-isotypic component that realizes the automorphic
Weil factor \(\omega_{\psi^{-1},W_n}\), choose the participating
extensions to the two full orthogonal groups, and then restrict to
the selected \(\SO\)-constituents. The resulting identity below is
unchanged up to a nonzero scalar, which is immaterial for
nonvanishing.

Since the theta lifts of vectors in \(\tau\) span
\(\widetilde\pi\), the assumed nonvanishing of the Fourier--Jacobi
period allows us to choose
\[
f'\in\omega_{\psi,V_{2n-1},W_n},
\quad
f\in\omega_{\psi^{-1},W_n}
\simeq\omega_{\psi,V_1,W_n},
\quad
\varphi_\tau\in\tau,
\quad
\varphi_\pi\in\pi
\]
such that
\[
\mathcal FJ_\psi
\left(
\Theta_{\psi,V_{2n-1},W_n}(f',\varphi_\tau),
\varphi_\pi,f
\right)\ne0.
\]
Set
\(
\widetilde f=f'\otimes f
\in
\omega_{\psi,V_{2n},W_n}.
\)
The seesaw identity gives
\begin{align}
&\mathcal FJ_\psi
\left(
\Theta_{\psi,V_{2n-1},W_n}(f',\varphi_\tau),
\varphi_\pi,f
\right)
\nonumber\\
&\quad=
\int_{[\SO(V_{2n-1})]}
\left(
\int_{[\Sp(W_n)]}
\Theta_{\psi,V_{2n},W_n}
(\widetilde f)(\widetilde g,h)
\varphi_\pi(\widetilde g)\,dg
\right)
\varphi_\tau(h)\,dh.
\label{be}
\end{align}
All integrals are absolutely convergent because both
\(\widetilde\pi\) and \(\pi\) are cuspidal. Here the cuspidality of
\(\widetilde\pi\) follows from
Proposition~\ref{central-zero-associated-packet}, whereas that of
\(\pi\) follows from Lemma~\ref{tempered-parameter-cuspidal}, since
\(N\) is a discrete tempered global \(A\)-parameter.

Let
\(
\tau'
\subset
\Theta_{\psi,W_n,V_{2n}}(\pi)
\)
be an irreducible constituent whose restriction to
\(\SO(V_{2n})(\A)\) occurs in the nonzero period in
\eqref{be}. The lift is nonzero. Moreover, it is the first occurrence
of \(\pi\) in the even orthogonal Witt tower containing \(V_{2n}\).
Indeed, choose an unramified finite place \(v\). If
\(\theta_{\psi_v,W_n,V_{2r}}(\pi_v)\neq0\) for some \(r<n\), then
Kudla's supercuspidal-support theorem forces the cuspidal support of
\(\pi_v\) to contain a $\GL_1$-component whose real
exponent is \(n-r\ge1\). This contradicts the almost
temperedness of \(\pi_v\). Hence the local theta lift to \(V_{2r}\)
vanishes for every \(r<n\), and consequently the global lifts to those smaller spaces vanish. By the
tower property, \(\tau'\) is cuspidal, and it is irreducible by
Proposition~\ref{Gan}.

Let \(N'\) be the global \(A\)-parameter of \(\tau'\). Kudla's
supercuspidal-support theorem gives, in general,
\(
N=\chi_{V_{2n}}\boxplus
\bigl(\chi_{V_{2n}}\otimes N'\bigr).
\)
Since \(\chi_{V_{2n}}=\mathbf1\), this becomes
\begin{equation}
\label{eq:N-one-plus-Nprime}
N=\mathbf1\boxplus N'.
\end{equation}
In particular, \(N'\) is a discrete tempered orthogonal global
\(A\)-parameter of dimension \(2n\), and it does not contain
\(\mathbf1\). The summands of \(M'\) are of symplectic type, whereas
those of \(N'\) are of orthogonal type, so no summand can occur in
both. It follows immediately from Definition~\ref{drel}, using
\([-1]=0\), that
\(
M=[1]\boxplus M'\) and
\(N=\mathbf1\boxplus N'
\)
form a relevant pair.

The right-hand side of \eqref{be} is a nonzero Bessel period for an
irreducible cuspidal constituent of
\(\tau'\boxtimes\tau\) on
\(\SO(V_{2n})(\A)\times\SO(V_{2n-1})(\A)\). Theorem~5.7 of
\cite{JZ}, which applies to these relevant pure inner forms and to
their generic global \(A\)-parameters, therefore yields
\begin{equation}
\label{eq:central-Nprime-Mprime}
L\left(\frac12,N'\times M'\right)\ne0.
\end{equation}

It remains to compute \(L(z,M,N)\) at \(z=0\). By
Fact~\ref{fac} and \eqref{eq:N-one-plus-Nprime},
\begin{align*}
M\otimes N
&=
(M'\otimes N')\boxplus M'\boxplus[1]
\boxplus(N'\otimes[1]),\\
\Sym^2(M)
&=
[2]\boxplus\Sym^2(M')\boxplus(M'\otimes[1]),\\
\wedge^2(N)
&=
\wedge^2(N')\boxplus N'.
\end{align*}
Consequently,
\begin{align*}
\ord_{z=0}L\left(z+\frac12,M\times N\right)
&=
-2+\ord_{z=0}\Bigl(
L\left(z+\frac12,M'\times N'\right)
L\left(z+\frac12,M'\right)\\
&\hspace{42mm}\cdot
L(z+1,N')L(z,N')
\Bigr),\\
\ord_{z=0}L\left(z+1,\Sym^2(M)\right)
&=
-2+\ord_{z=0}\Bigl(
L\left(z+1,\Sym^2(M')\right)
L\left(z+\frac32,M'\right)\\
&\hspace{42mm}\cdot
L\left(z+\frac12,M'\right)
\Bigr),\\
\ord_{z=0}L\left(z+1,\wedge^2(N)\right)
&=
\ord_{z=0}\Bigl(
L\left(z+1,\wedge^2(N')\right)L(z+1,N')
\Bigr).
\end{align*}
The factors \(L(z+\frac12,M')\), including their possible zeros,
cancel between the numerator and the denominator. We obtain
\begin{align}
\ord_{z=0}L(z,M,N)
&=
\ord_{z=0}\Bigl(
L\left(z+\frac12,M'\times N'\right)L(z,N')
\Bigr)
\nonumber\\
&\quad-
\ord_{z=0}\Bigl(
L\left(z+1,\Sym^2(M')\right)
L\left(z+1,\wedge^2(N')\right)
L\left(z+\frac32,M'\right)
\Bigr).
\label{eq:order-LMN-cuspidal-associated}
\end{align}

Since \(M'\) is symplectic tempered and \(N'\) is orthogonal
tempered,
\[
L\left(1,\Sym^2(M')\right)\ne0,
\qquad
L\left(1,\wedge^2(N')\right)\ne0.
\]
The discreteness of \(N=\mathbf1\boxplus N'\) implies that \(N'\)
does not contain \(\mathbf1\), and hence \(L(0,N')\ne0\).
Furthermore, \(L(\frac32,M')\ne0\) by absolute convergence.
Combining these facts with
\eqref{eq:central-Nprime-Mprime} in
\eqref{eq:order-LMN-cuspidal-associated}, we obtain
\(
\ord_{z=0}L(z,M,N)=0.
\)
Since \(L(z,M,N)\) is holomorphic at \(z=0\) by
\cite[Theorem~9.7]{Gan1}, it follows that
\[
\left.L(z,M,N)\right|_{z=0}\ne0.
\]
\end{proof}

For a globally generic cuspidal representation of a metaplectic group, its global $A$-parameter is identified with its functorial lift to the corresponding general linear group; see \cite[Theorem~B.1]{AHKO}. If this parameter is non-tempered, then \cite[Theorem~11.2]{GRS} shows that it is necessarily of the form $M = [1] \boxplus M'$, where $M'$ is a discrete tempered symplectic global $A$-parameter. Corollary~\ref{voganrescor}, together with cuspidality, then forces $L\left(\frac{1}{2}, M'\right) = 0$, so Theorem~\ref{non0} applies. Combining this with the tempered case \cite[Theorem~1.1]{Y}, we obtain the following consequence.

\begin{cor}
\label{non1}
Let \(\wt{\pi}\) and \(\pi\) be irreducible generic cuspidal
automorphic representations of \(\Mp_{2n}(\A)\) and
\(\Sp_{2n}(\A)\), respectively. Let \(M\) and \(N\) be the
\(A\)-parameters of \(\wt{\pi}\) and \(\pi\), respectively. If
\(
\mathcal FJ_{\psi}
\left(
\wt{\pi},\pi,\omega_{\psi^{-1},W_n}
\right)
\ne0,
\)
then \((M,N)\) is a relevant pair and
\[
\left.L(z,M,N)\right|_{z=0}\ne0.
\]
\end{cor}

Combining Corollary~\ref{voganrescor},
Proposition~\ref{central-zero-associated-packet}, and the preceding
discussion, we obtain the following dichotomy.

\begin{cor}
\label{central-value-packet-dichotomy}
Let \(n\geq1\), let \(M'\) be a discrete tempered symplectic global
\(A\)-parameter of dimension \(2n-2\), and put
\(
M=[1]\boxplus M'.
\)
Then:
\begin{enumerate}
\item The associated automorphic \(L\)-packet is always nonempty:
\(
\Pi_{\phi_M}^{\mathrm{aut}}\ne\varnothing.
\)
\item If \(L(\frac12,M')=0\), then
\(
\Pi_{\phi_M}^{\mathrm{aut}}
\subset
\Irr_{\mathrm{cusp}}(\Mp_{2n}).
\)
\item If \(L(\frac12,M')\ne0\), then
\(
\Pi_{\phi_M}^{\mathrm{aut}}
\subset
\Irr_{\mathrm{res}}(\Mp_{2n}).
\)
\end{enumerate}
\end{cor}

\subsection*{Acknowledgements} The author expresses heartfelt gratitude to Shunsuke Yamana for many insightful discussions and to Hiraku Atobe for illuminating discussions on Miyawaki lifting and its connection with the GGP conjecture. The author would also like to thank Dongho Byeon, Youngju Choie, Wee Teck Gan, Haseo Ki, Zhengyu Mao, Dipendra Prasad, Sug Woo Shin, and Shunsuke Yamana for their interest in this work, warm encouragement, and generous support. Special thanks are due to the anonymous referees for their valuable comments and suggestions, which significantly improved the clarity of the paper.

This work was undertaken while the author held research fellowships at KAIST, Yonsei University, and Catholic Kwandong University. The author is grateful to these institutions for providing a conducive research environment and generous support. Financial support was provided by the POSCO Science Fellowship and the National Research Foundation of Korea (NRF) grants (nos. 2020R1F1A1A01048645 and RS-2023-00237811). During the preparation of this manuscript, the author used Google's Gemini to improve English language clarity and assist with LaTeX formatting.

\end{document}